\numberwithin{equation}{section}
\newcommand{\cA}{\mathcal{A}}
\newcommand{\cP}{\mathcal{P}}
\newcommand{\F}{\mathbb{F}}
\newcommand{\bR}{\mathbb{R}}
\newcommand{\R}{\mathbb{R}}
\newcommand{\C}{\mathbb{C}}
\newcommand{\ra}{\rightarrow}
\newcommand{\qor}{\quad \textrm{or} \quad}
\newcommand{\qand}{\quad \textrm{and} \quad}
\newcommand\subsetsim{\mathrel{%
\ooalign{\raise0.2ex\hbox{$\subset$}\cr\hidewidth\raise-0.8ex\hbox{\scalebox{0.9}{$\sim$}}\hidewidth\cr}}}
\newcommand{\eps}{\varepsilon}
\DeclareMathOperator{\supp}{supp}
\newcommand{\Q}{\mathbb Q}
\newcommand{\Z}{\mathbb Z}
\theoremstyle{theorem}
\newtheorem{theorem}{Theorem}[section]
\newtheorem{corollary}[theorem]{Corollary}
\newtheorem{proposition}[theorem]{Proposition}
\newtheorem{lemma}[theorem]{Lemma}
\theoremstyle{definition}
\newtheorem{definition}[theorem]{Definition}
\newtheorem{convention}[theorem]{Convention}
\newtheorem{remark}[theorem]{Remark}
\newtheorem*{example}{Example}
\patchcmd{\subsection}{-.5em}{.5em}{}{}
\patchcmd{\subsubsection}{-.5em}{.5em}{}{}
\begin{document}

\title[Aperiodic order and spherical diffraction]{Aperiodic order and spherical diffraction, I:\\ Auto-correlation of model sets}

\author{Michael Bj\"orklund}
\address{Department of Mathematics, Chalmers, Gothenburg, Sweden}
\email{micbjo@chalmers.se}
\thanks{}

\author{Tobias Hartnick}
\address{Mathematics Department, Technion, Haifa 32000, Israel}
\curraddr{}
\email{hartnick@tx.technion.ac.il}
\thanks{}

\author{Felix Pogorzelski}
\address{Mathematics Department, Technion, Haifa 32000, Israel}
\curraddr{}
\email{felixp@tx.technion.ac.il}
\thanks{}

\keywords{}

\subjclass[2010]{Primary: 52C23; Secondary: 22D40, 22E40, 37A45}

\date{}

\dedicatory{}
\begin{abstract} We study uniform and non-uniform model sets in arbitrary locally compact second countable (lcsc) groups, which provide a natural generalization of uniform model sets in locally compact abelian groups as defined by Meyer and used as mathematical models of quasi-crystals. We then define a notion of auto-correlation for subsets of finite local complexitiy in arbitrary lcsc groups, which generalizes Hof's classical definition beyond the class of amenable groups, and provide a formula for the auto-correlation of a regular model set. Along the way we show that the punctured hull of an arbitrary regular model set admits a unique invariant probability measure, even in the case where the punctured hull is non-compact and the group is non-amenable. In fact this measure is also the unique stationary measure with respect to any admissible probability measure.
\end{abstract}
\maketitle

\section{Introduction}
Aperiodic point sets in Euclidean space are a classical object of study in geometry, combinatorics and harmonic analysis. The diffraction theory of such point sets was pioneered by Meyer already in the late 1960s \cite{Meyer-69, Meyer-70, Meyer-72}. However, it came to a wider popularity only in the 1980s, after the discovery of quasi-crystals and the subsequent attempts of physicists, crystallographers and mathematicians to provide mathematical models  \cite{KramerN-84, LevineSteinhardt, GratiasMichel} explaining the icosahedral symmetry in the diffraction picture of certain aluminium-manganese alloys discovered experimentally by Shechtman et al.\@ \cite{Shechtman-84}. Diffraction theory of aperiodic point sets in locally compact abelian groups, sometimes called mathematical quasi-crystals, has remained a popular topic in abelian harmonic analysis ever since. The recent monograph \cite{BaakeG-13} lists several hundred references. Among these, the following were particularly influential on the current article: \cite{Dworkin-93, Hof-95, Schlottmann-99, Moody-97, BaakeL-04}. Further developments in the theory of mathematical quasicrystals (including diffraction theory)
are covered in the works \cite{Hof-98, Solomyak-98, LeeMS-02, Moody-02, Gouere-05, BaakeM-04, BaakeLM-07, LenzS-09, MarklofS-14, MarklofS-15, RichardS-15,  KellerR-15, Strungaru15, MoodySurvey, BaakeHuckStrungaru}.

From the point of view of physics and crystallography, it is natural to restrict the attention to quasi-crystals in $\R^n$, $n \leq 3$. From the mathematical point of view, this restriction is rather unnatural, and mathematical quasi-crystals have since long been studied in Euclidean spaces of arbitrary dimensions and in arbitrary locally compact abelian groups. However, there is no reason to stop at the class of locally compact abelian groups. In fact, in the present article and its sequel \cite{BHP2} we demonstrate that a large part of the diffraction theory of mathematical quasi-crystals can be carried out in the framework of arbitrary locally-compact second countable (lcsc) groups. 
More specifically, our goals in this series of articles are three-fold:
\begin{enumerate}
\item to construct plenty of examples of mathematical quasi-crystals in non-abelian (and even non-amenable) lcsc groups, and to point out some of the new phenomena which appear in this context;
\item to develop a theory of diffraction, which works in our general context, and specializes to the classical theory in the abelian case;
\item to compute in a rather explicit way the (spherical) diffraction of our examples.
\end{enumerate}
Classically, the diffraction of a quasi-crystal in $\R^n$ is defined as the Fourier transform of its so-called auto-correlation measure. Similarly, our diffraction theory for quasi-crystals in non-abelian groups will consist of two parts. In this first part we are going to develop the theory of auto-correlation measures of quasi-crystals in the non-abelian setting. The sequel \cite{BHP2} will then be concerned with the (spherical) Fourier transform of this auto-correlation, which we consider as a natural notion of non-abelian diffraction.

\subsection{Model sets} A special role in the Euclidean theory of quasi-crystals is played by so-called model sets, as introduced by Meyer \cite{Meyer-69, Meyer-70, Meyer-72}. These are aperiodic sets constructed by a cut-and-project scheme from lattices in products of locally compact abelian groups. We extend this definition to arbitrary lcsc groups in Definition \ref{def_regular} below after some preliminary definitions. For the purposes of this introduction it suffices to know that a model set is constructed starting from a lattice $\Gamma$ in a product $G\times H$ of lcsc groups by choosing a compact subset $W_0 \subset H$ (called the \emph{window}) and projecting $(G\times W_0) \cap \Gamma$ to $G$. Under certain technical assumptions on the quadruple $(G, H, \Gamma, W_0)$ (see Definition \ref{def_regular}) we call the resulting set $P_0$ a \emph{regular model set}. We say that $P_0$ is \emph{uniform} or \emph{non-uniform} depending on whether $\Gamma$ has the corresponding property. Note that all regular model sets in lcsc abelian groups are uniform, hence the existence of non-uniform model sets is a new phenomenon in the non-abelian setting.

An important property of model sets is their finite local complexity: If $P$ is a model set in a lcsc group $G$, then $P^{-1}P$ is locally finite, i.e.\@ closed and discrete. In fact, it is even uniformly discrete. In the Euclidean context one can show that conversely a relatively dense subset $P \subset \R^n$ has a uniformly discrete difference set $P-P$ if and only if it is a relatively dense subset of a model set, cf.\@ 
\cite{Lagarias-96,Moody-97}. There is no analogous classification theorem for non-abelian groups, but model sets can still be defined and studied in complete analogy with the abelian case.

Model sets exist in many unimodular lcsc groups. For example, every semisimple real Lie group admits a model set which is not contained in a lattice. Non-uniform model sets exist in many real and $p$-adic semisimple Lie groups, despite the fact that semisimple p-adic Lie groups never admit non-uniform lattices. Model sets in nilpotent Lie groups are always uniform, and not every nilpotent group contains a (uniform) model set. However, the class of nilpotent Lie groups containing a uniform model set is substantially larger than the class of nilpotent Lie groups containing a lattice \cite{BH}. For more on these and other examples see Subsection \ref{SubsecExamples}.

\subsection{A general framework for auto-correlation} We now present a general framework for auto-correlation of subset of finite local complexity in lcsc groups, which we are going to apply to model sets. The standard definition of the auto-correlation of a quasi-crystal in $\R^n$ is due to Hof \cite{Hof-95}. Let us first consider the case where $P_0 \subset \R^3$ is a subset of finite local complexity with the property that the finite sums
\[
\sigma_t(f) := \frac{1}{{\rm Vol}(B_t(0))}\sum_{x \in P_0 \cap B_t(0)} \sum_{y \in P_0 \cap B_t(0)} f(y-x)
\] 
converge as $t \to \infty$ for every $f \in C_c(\R^3)$, where $B_t(0)$ denotes the Euclidean ball of radius $t$ around the origin. For example, these assumptions are satisfied if $P_0$ is a model set in $\R^3$. In this situation, the \emph{Hof diffraction} of $P_0$ can be defined as the Radon measure  $\eta_{P_0}$ on $\R^3$ given by
\[
\eta_{P_0}(f) =  \lim_{t \to \infty} \sigma_t(f) \quad (f \in C_c(\R^3)).
\]
If the finite sums $\sigma_t(f)$ do not converge, then one can consider the different accumulation points, but the significance of such arbitrary accumulation points is rather unclear. 

Hof's definition of auto-correlation can be generalized to certain classes of subsets (including uniform regular model sets) of amenable lcsc groups by replacing the sequence $B_t(0)$ by a suitable F\o lner sequence $(F_t)$. Apart from the question of dependence on the F\o lner sequence (which can be resolved), this does yield a reasonable theory of auto-correlation, but this approach has no chance to be generalized beyond amenable groups. We thus suggest an alternative definition of auto-correlation, which works in greater generality. 

To define this notion, we recall that the collection  $\mathcal C(G)$ of closed subsets of a lcsc group $G$ carries a natural compact metrizable topology called the \emph{Chabauty--Fell topology} (see Subsection \ref{SubsecHull}). We consider $\mathcal C(G)$ as a $G$-space with respect to the left-translation action and, given a subset $P_0 \subset G$ of finite local complexity, we denote by  $X = X_{P_0}$ the orbit closure of $P_0$ in $\mathcal C(G)$, which we refer to as the \emph{hull} of $P_0$. We also consider the \emph{punctured hull} $X^\times = X^\times_{P_0}$ given by $X^\times := X \setminus \{\emptyset\}$ and define the periodization map of $P_0$ by the formula
\[
\mathcal P: C_c(G) \to C_0(X^\times_{P_0}), \quad \mathcal Pf(P) := \sum_{x \in P} f(x),
\]
where $C_0^\times(X_{P_0})$ denotes the space of continuous functions on 
$X^\times_{P_0}$ vanishing at infinity. If $\nu$ is a $G$-invariant probability measure on $X_{P_0}$, then there is a unique positive-definite Radon measure $\eta_{\nu}$ on $G$ such that
\[\eta_{\nu}(f^* \ast f) = \|\mathcal P f\|_{L^2(X^\times, \nu)}^2 \quad (f \in C_c(G)),\]
and we refer to this measure as the \emph{auto-correlation} of $\nu$. In many examples of interest, including all regular model sets, there is in fact a unique $G$-invariant measure on $X^\times$; in this case we call $\eta_{P_0} := \eta_{\nu}$ simply the \emph{auto-correlation} of $P_0$. 

For many FLC subsets of amenable lcsc groups, including the case of uniform regular model sets in such groups, the above definition is equivalent to Hof's definition. For example, assume that $G$ is amenable and that $P_0 \subset G$ if an FLC subset whose punctured hull $X^\times = X^\times_{P_0}$ is compact and uniquely ergodic. Then for all $f \in C_c(G)$ we have
\begin{equation}\label{HofLimit}
\eta_{P_0}(f) =\lim_{t \ra \infty} \frac{1}{m_G(F_t)} \sum_{x \in P_0 \cap F_t} \sum_{y \in P_0} f(x^{-1}y) = \lim_{t \ra \infty} \frac{1}{m_G(F_t)} \sum_{x \in P_0 \cap F_t} \sum_{y \in P_0 \cap F_t} f(x^{-1}y)
\end{equation}
along suitable F\o lner sequences $(F_t)$. For example, this holds if $(F_t)$ is a nested van Hove sequence as defined e.g. in \cite{BaakeG-13}, or slightly more generally a weakly admissible F\o lner sequence as defined in Definition \ref{DefWeaklyAdmissible} below. The proof of this fact is a straight-forward generalization of the classical argument in the abelian case.

Remarkably, a version of the first equality in \eqref{HofLimit} still holds in many non-amenable lcsc groups, despite the fact that such groups do not admit any F\o lner sequences whatsoever. We provide some explicit examples of this phenomenon in Theorem \ref{ThmLieGroupsIntro} below. For a more systematic treatment of approximation theorems for non-amenable groups in the context of the so-called \emph{spherical auto-correlation} we refer the reader to the sequel \cite{BHP2} 





\subsection{The hull of a regular model set} From now on let $P_0 \subset G$ be a regular model set associated with a quadruple $(G, H, \Gamma, W_0)$ as above. If $G$ is non-amenable and/or the punctured hull $X^\times = X^\times_{P_0}$ is not compact, then it is not a priori clear whether $X^\times$ admits a $G$-invariant probability measure, and even if such a measure exists, it need not be unique a priori. In the context of non-amenable groups it is actually more natural to consider \emph{stationary} measures on the punctured hull (see the discussion preceding Theorem \ref{ThmUE}) and discuss existence and uniqueness of such measures.

In order to study establish both the existence and the uniqueness of stationary or invariant measures on $X^\times$ we introduce a certain parametrization map between the punctured hull and the parameter space $Y:=(G \times H)/\Gamma$, which in the abelian case reduces to Schlottmann's generalized torus parametrization \cite{Robinson, Schlottmann-99}.
The existence of such a parametrization map yields immediately the following results; here $m_Y$ denotes the unique $(G\times H)$-invariant probability measure on $Y$.
 \begin{theorem}[Punctured hulls of regular model sets]
\label{main1} Let $P_0$ be a regular model set and let $X^\times := X^\times_{P_0}$. Then there exists a unique $G$-invariant probability measure $\nu$ on $X^\times$, which is also the unique stationary probability measure with respect to any admissible probability measure $\mu$ on $G$. Moreover, $(X^\times, \nu)$ is measurably $G$-isomorphic to $(Y, m_Y)$. In particular, as unitary $G$-representations,
\[ L^2(X^\times, \nu) \cong L^2(Y, m_Y).\]
Moreover, if $P_0$ is uniform, then $X^\times = X$ is a compact minimal $G$-space.
\end{theorem}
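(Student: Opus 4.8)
The plan is to deduce every assertion from a $G$-equivariant parametrization identifying $(X^\times,\nu)$ with $(Y,m_Y)$, and then to prove the measure-uniqueness statements on the homogeneous model $Y=(G\times H)/\Gamma$, where they become tractable. Concretely, I would take the parametrization $\beta\colon Y\to\cC(G)$ given by
\[
\beta\big((g,h)\Gamma\big)=\{\,g\gamma_G : \gamma=(\gamma_G,\gamma_H)\in\Gamma,\ h\gamma_H\in W_0\,\},
\]
which is well defined on cosets, satisfies $\beta(\Gamma)=P_0$, and is $G$-equivariant for the action $g_0\cdot(g,h)\Gamma=(g_0g,h)\Gamma$. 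Since $\pi_H(\Gamma)$ is dense in $H$ and $W_0$ has nonempty interior, $h\,\pi_H(\Gamma)$ meets $W_0$ for every $h$, so $\beta(y)\neq\emptyset$ always and $\beta$ takes values in $X^\times$; membership in the orbit closure $X$ follows because the base orbit $G\cdot\Gamma$ is dense in $Y$, which I verify directly from density of $\pi_H(\Gamma)$. I will then use the companion continuous $G$-factor map $\tau\colon X^\times\to Y$ which inverts $\beta$ over the conull, $G$-invariant set $Y_{ns}$ of nonsingular parameters (those $y=(g,h)\Gamma$ with $h\,\pi_H(\Gamma)\cap\partial W_0=\emptyset$); granting the parametrization, $\beta$ and $\tau$ are mutually inverse $G$-equivariant Borel isomorphisms between conull subsets of $Y$ and $X^\times$.

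Given this, existence and the isomorphism statement are formal. I set $\nu:=\beta_*m_Y$; it is a probability measure on $X^\times$, and $G$-invariant because $\beta$ is $G$-equivariant and $m_Y$ is $G$-invariant. The map $\tau$ then realises a measurable $G$-isomorphism $(X^\times,\nu)\cong(Y,m_Y)$, and pulling functions back gives the asserted unitary equivalence $L^2(X^\times,\nu)\cong L^2(Y,m_Y)$ of $G$-representations.

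For uniqueness I reduce to $Y$ via $\tau$. If $\nu'$ is $G$-invariant (resp.\ $\mu$-stationary) on $X^\times$, then by $G$-equivariance $\tau_*\nu'$ is $G$-invariant (resp.\ $\mu$-stationary) on $Y$; granting the corresponding uniqueness on $Y$ gives $\tau_*\nu'=m_Y$. Since the singular locus $Y_{sing}$ is $m_Y$-null, $\nu'$ is then concentrated on $\tau^{-1}(Y_{ns})$, where $\beta\circ\tau=\mathrm{id}$, whence $\nu'=\beta_*\tau_*\nu'=\beta_*m_Y=\nu$; thus the reduction automatically disposes of the singular set once uniqueness on $Y$ is known. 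It therefore suffices to prove that $m_Y$ is the unique $G$-invariant, and the unique $\mu$-stationary, probability measure on $Y$. The invariant case is elementary: a $G$-invariant $\lambda$ lifts to a left-$G$-invariant, right-$\Gamma$-invariant Radon measure $\tilde\lambda$ on $G\times H$; left-$G$-invariance together with unimodularity forces $\tilde\lambda=m_G\otimes\rho$ for a Radon measure $\rho$ on $H$, and right-$\Gamma$-invariance then forces $\rho$ to be invariant under right translation by the dense subgroup $\pi_H(\Gamma)$, hence $\rho=m_H$ and $\lambda=m_Y$. The same density computation shows that the $G$-action on $(Y,m_Y)$ is ergodic and that every $G$-orbit in $Y$ is dense.

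The topological claims in the uniform case follow from this: if $\Gamma$ is cocompact then $Y$ is compact, so $X^\times=\tau^{-1}(Y)$ is compact; density of all $G$-orbits makes $Y$ minimal, and since $\tau$ is an almost one-to-one extension of a minimal system $X^\times$ is minimal; finally uniform relative denseness of $P_0$ is inherited by Chabauty--Fell limits, so $\emptyset\notin X$ and $X^\times=X$. The main obstacle is the stationary uniqueness on $Y$, since, unlike invariance, $\mu$-stationarity does \emph{not} lift to an invariant measure on $G\times H$ and so the clean product-plus-density argument is unavailable. Here I would exploit admissibility of $\mu$ — in particular its absolute continuity, which smooths a stationary measure along $G$-orbits — together with the commuting $H$-action and the density of $\pi_H(\Gamma)$, to show that every $\mu$-stationary measure on $Y$ is in fact $G$-invariant, at which point the invariant case finishes the proof. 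Establishing this stationarity-rigidity step is the crux of the argument, and I expect it to carry essentially all of the analytic difficulty.
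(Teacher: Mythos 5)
Your overall architecture coincides with the paper's: a measurable parametrization identifying $(X^\times,\nu)$ with $(Y,m_Y)$ up to null sets, plus uniqueness of stationary measures on the homogeneous space $Y$. But as a proof it has two genuine gaps, and they are exactly the two places where the paper does its real work. First, you simply ``grant'' the factor map $\tau\colon X^\times\to Y$ that inverts your explicit map $\beta$. This is not a companion fact one can quote: its existence is the main technical theorem of the paper (Theorem \ref{ThmParametrizationMap}), proved by analyzing the orbit closure $Z=\overline{G.(P_0,y_0)}\subset X^\times\times Y$ in the local topology (Appendix \ref{AppendixLocalTopology}) and showing that every section $Z[P]$ is a singleton via the window lemmas (Lemmas \ref{LemmaRegularityConvergence} and \ref{LemmahP}); the difficulty is precisely that in the non-uniform case $X^\times$ is non-compact, so Schlottmann's compactness-and-minimality-based construction is unavailable. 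The gap is structural, not cosmetic: to prove uniqueness you must push an \emph{arbitrary} invariant or stationary measure $\nu'$ forward to $Y$, so you need a $G$-map defined on all of $X^\times$ (on any possible support of $\nu'$), not merely an a.e.\ inverse of $\beta$ on $\beta(Y^{\rm ns})$. Two subsidiary claims inherit the same gap: your assertion that $\beta(y)$ lies in the orbit closure $X$ cannot follow from density of $G\cdot y_0$ alone (that would require continuity of $\beta$ at the relevant parameters, which is exactly the content of Lemma \ref{LemmaRegularityConvergence}(ii), and is generally false at singular $y$), and your assertion that $\tau$ is \emph{continuous} is more than the paper itself establishes -- continuity is obtained only when $\Gamma$ is cocompact, while in general one only gets a Borel map with closed graph (which, fortunately, is all your argument needs).

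Second, you defer the uniqueness of $\mu$-stationary measures on $Y$, sketching the smoothing strategy and stating that it carries ``essentially all of the analytic difficulty.'' That self-assessment is accurate, but it means the theorem is not proved: this is the paper's Lemma \ref{LemmaUE}, which convolves a stationary measure twice with $\mu\otimes\rho m_H$, shows the resulting continuous density is stationary and hence $G$-invariant via an $L^2$ variance identity, concludes it is constant by minimality of $Y$ (Proposition \ref{Minimality}(i)), and finishes by letting $\rho m_H\to\delta_e$; your sketch names the right ingredients (absolute continuity, the commuting $H$-action, density of $\Gamma_H$) but executes none of them. On the credit side: your transfer-of-uniqueness argument through the singular set matches the paper's proof of Theorem \ref{ThmUE} (using Lemma \ref{NSgeneric}); your lifting proof that $m_Y$ is the unique $G$-\emph{invariant} measure on $Y$ (lift to a right-$\Gamma$-invariant Radon measure, disintegrate as $m_G\otimes\rho$, use density of $\Gamma_H$) is a clean elementary alternative to the paper's duality-principle appeal, though it does not by itself yield the stationary case; and your minimality argument for uniform $P_0$ does work once the parametrization is granted, because $G.P_0\subset X^{\rm ns}$ is dense. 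One small slip there: ``$X^\times=\tau^{-1}(Y)$ is compact'' is a non sequitur (continuous preimages of compact sets need not be compact); your fallback -- relative denseness of a uniform model set forces $\emptyset\notin X$ -- is the correct argument.
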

Note that if $P_0$ is non-uniform, then both $X^\times$ and $Y$ are non-compact. In fact, overcoming this non-compactness is one of the major technical issues in the proof of the theorem.

\subsection{Auto-correlation of regular model sets in semisimple Lie groups} It follows from Theorem \ref{main1} that for every regular model set $P_0$ in a lcsc group $G$ the auto-correlation $\eta_{P_0}$ is well-defined. If $G$ is amenable, then it can equivalently be defined by a limit over F\o lner sets as in \eqref{HofLimit}. A similar approximation formula also holds if $G$ is a semisimple Lie group (and in particular non-amenable). Let us start with a concrete example:
\begin{example}[Approximation formula for regular model sets in ${\rm PSL}_2(\R)$] Let $\mathbb H^2$ be the upper half space model of the hyperbolic plane, and denote by $\pi: T^1\mathbb H^2 \to \mathbb H^2$ its unit tangent bundle. Given $t>0$ we write $B_t \subset \mathbb H^2$ for the hyperbolic ball of radius $t$ around $i$. The group $G := {\rm PSL_2}(\R)$ acts by diffeomorphisms on $\mathbb H^2$, and the induced action on $T^1\mathbb H^2$ is simply transitive. Let $F_t$ be the subset of $G$ which under the diffeomorphism $G \cong T^1\mathbb H^2$ corresponds to $\pi^{-1}(B_t)$. Then for every regular model set $P_0 \subset G$ the auto-correlation measure $\eta_{P_0}$ is given by 
\begin{equation}\label{SemisimpleFormula}
\eta_{P_0}(f) = \lim_{t \ra \infty} \frac{1}{m_{G}(F_t)} \sum_{x \in P_o \cap F_t} 
\sum_{y \in P_o}
f(x^{-1}y) \quad (f \in C_c(G)).\
\end{equation}
\end{example}
The formula carries over to arbitrary semisimple Lie groups as follows:
\begin{theorem}[Approximation formula for regular model sets in semisimple Lie groups]\label{ThmLieGroupsIntro}
Let $G$ be a connected semisimple real Lie group with no non-trivial
compact factors, and $K<G$ a maximal compact subgroup. Let 
$d$ denote the $G$-invariant Riemannian metric on $G/K$ associated to the Cartan-Killing 
form of the symmetric space $G/K$, and for $t>0$ set 
\[
F_t = \big\{ g \in G \, : \, d(K,gK) \leq t \big\}.
\]
Then for every regular model set $P_0 \subset G$ the auto-correlation measure $\eta_{P_0}$ is given by \eqref{SemisimpleFormula}. 
\end{theorem}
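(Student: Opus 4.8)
The plan is to reduce the approximation formula \eqref{SemisimpleFormula} to a lattice-point counting statement for $\Gamma$ in $G \times H$, and then to establish the latter by the mixing method of Eskin--McMullen, which (unlike the F\o lner averaging available in the amenable case) survives the passage to non-amenable $G$. The only roles of semisimplicity will be to guarantee (i) that the bi-$K$-invariant balls $F_t$ are \emph{well-rounded} and (ii) that the $G$-action on $(Y, m_Y)$ is mixing; these two inputs replace the F\o lner condition entirely.

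First I would rewrite the right-hand side of \eqref{SemisimpleFormula}. Writing $g \cdot P = gP$ for the hull action and recalling $\mathcal P f(P) = \sum_{x \in P} f(x)$, the inner sum is $\sum_{y \in P_0} f(x^{-1}y) = \mathcal P f(x^{-1} P_0)$, and reorganising the resulting double sum according to the difference $z = x^{-1}y$ gives
\[
\frac{1}{m_G(F_t)} \sum_{x \in P_0 \cap F_t} \sum_{y \in P_0} f(x^{-1}y) = \frac{1}{m_G(F_t)} \sum_{z \in P_0^{-1}P_0} f(z)\, N_t(z),
\]
where $N_t(z) := \#\{x \in P_0 \cap F_t : xz \in P_0\}$. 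Since $P_0^{-1}P_0$ is uniformly discrete (finite local complexity of model sets) and $f$ has compact support, the sum over $z$ is finite and its index set depends only on $\operatorname{supp} f$, not on $t$; hence it suffices to prove $N_t(z)/m_G(F_t) \to \eta(z)$ for each individual $z$ and to identify the resulting limit $\sum_z \eta(z) f(z)$ with $\eta_{P_0}(f)$.

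The cut-and-project structure turns $N_t(z)$ into a genuine lattice count. If $z$ lifts to $(z, \tau(z)) \in \Gamma$ (its internal coordinate under the star map), then for $x \in P_0$ with $(x,k) \in \Gamma$ and $k \in W_0$, injectivity of $\Gamma \to G$ forces the internal coordinate of $xz$ to be $k\tau(z)$, so $xz \in P_0$ is equivalent to $k\tau(z) \in W_0$, i.e. $k \in W_0 \cap W_0 \tau(z)^{-1} =: W_z$. Consequently $N_t(z) = \#\big(\Gamma \cap (F_t \times W_z)\big)$, and the crux becomes the counting asymptotics
\[
\frac{\#\big(\Gamma \cap (F_t \times W)\big)}{m_G(F_t)} \xrightarrow[t \to \infty]{} \frac{m_H(W)}{\operatorname{covol}(\Gamma)}
\]
for any window $W \subset H$ with $m_H(\partial W) = 0$. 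This is the hardest step and the one place where non-amenability must be defeated. I would run the Eskin--McMullen argument in $G \times H$: the family $F_t \times W$ is well-rounded, since the Cartan balls $F_t = K A_t K$ have controlled, exponentially growing volume so that $m_G(F_{t+\varepsilon} \setminus F_{t-\varepsilon})/m_G(F_t) \to 0$ as $\varepsilon \to 0$, while $m_H(\partial W)=0$ takes care of the $H$-factor. Because $F_t \times W$ expands only along $G$ while staying bounded in $H$, the argument invokes only mixing of the $G$-action on $(Y, m_Y)$, which holds by Howe--Moore applied to the semisimple group $G$ with no compact factors together with the $G$-ergodicity of $(Y,m_Y) \cong (X^\times, \nu)$ furnished by Theorem \ref{main1}. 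Concretely, periodising $\mathbf 1_{F_t \times W}$ to a function on $Y$, testing against a smooth bump at the base point, and using mixing to replace the pointwise value by its $m_Y$-average converts the count into the volume ratio; sandwiching $\mathbf 1_W$ between continuous functions upgrades this from smooth windows to windows with null boundary.

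Combining the three steps yields $N_t(z)/m_G(F_t) \to m_H(W_z)/\operatorname{covol}(\Gamma) =: \eta(z)$, and hence the limit $\sum_z \eta(z) f(z)$ in \eqref{SemisimpleFormula}. It remains to recognise this as $\eta_{P_0}(f)$: this is precisely the content of the explicit autocorrelation formula for regular model sets, or can be verified directly from the defining relation $\eta_{P_0}(g^* \ast g) = \|\mathcal P g\|_{L^2(X^\times,\nu)}^2$ by unfolding the integral over $(Y, m_Y)$ provided by the isomorphism in Theorem \ref{main1}, a positive-definite Radon measure being determined by its values on the functions $g^* \ast g$. The main obstacle is, as indicated, the counting asymptotics --- establishing well-roundedness of the Cartan balls and extracting the count from Howe--Moore mixing; by contrast the reduction and the final identification are essentially formal once the window formula $\eta(z) = m_H(W_z)/\operatorname{covol}(\Gamma)$ is in hand.
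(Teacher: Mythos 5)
Your combinatorial reduction is sound: rewriting $\sigma_t(f)$ as $\frac{1}{m_G(F_t)}\sum_z f(z)N_t(z)$ over the finite set $P_0^{-1}P_0\cap\operatorname{supp}f$, the identity $N_t(z)=\#\bigl(\Gamma\cap(F_t\times W_z)\bigr)$ with $W_z=W_0\cap W_0\tau(z)^{-1}$, and the identification of $\sum_z m_H(W_z)\operatorname{covol}(\Gamma)^{-1}f(z)$ with $\eta_{P_0}(f)$ via Theorem \ref{main2} are all correct, and this counting-flavored route is genuinely different from the paper's (which never isolates a counting problem, but feeds the ergodic averages of $s\mapsto\mathcal{P}f(s^{-1}P_0)$ directly into the abstract approximation theorem, Theorem \ref{approxthm1}). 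The gap is in your key analytic input. You claim that the $G$-action on $(Y,m_Y)$ is mixing ``by Howe--Moore \ldots together with $G$-ergodicity''. This implication holds only when \emph{every} noncompact simple factor of $G$ acts ergodically on $Y$: Howe--Moore for a semisimple group gives decay of matrix coefficients only for representations in which each simple factor separately has no nonzero invariant vectors, and $G$-ergodicity does not imply factorwise ergodicity when $G$ is not simple. The theorem you are proving covers regular model sets where this genuinely fails. For example, let $G=G_1\times G_2$ with $G_i$ simple, $\Gamma_1<G_1$ a lattice, $\Gamma_2<G_2\times H$ an irreducible lattice projecting injectively to $G_2$ and densely to $H$, and $\Gamma=\Gamma_1\times\Gamma_2<G\times H$. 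This satisfies the paper's definition of a cut-and-project scheme (only injectivity over $G$ and density in $H$ are required), and any regular window yields a regular model set; but $Y\cong G_1/\Gamma_1\times(G_2\times H)/\Gamma_2$, the factor $G_1$ acts trivially on $L^2_0\bigl((G_2\times H)/\Gamma_2\bigr)\subset L^2_0(Y)$, so the $G$-action is ergodic yet not mixing, and matrix coefficients do not decay along $(g_1,e)$, $g_1\to\infty$. (Taking $H=\{e\}$ and $\Gamma$ a reducible lattice in $G$ — a legitimate regular model set — gives the same failure.) So the Eskin--McMullen argument cannot be launched from pointwise mixing in the stated generality.

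What saves the counting statement in these cases is not mixing but the fact that the Riemannian balls $F_t$ are \emph{balanced} across the simple factors: the correct input is the averaged statement $\frac{1}{m_G(F_t)}\int_{F_t}\langle (g,e).\phi,\psi\rangle\,dm_G(g)\to\int_Y\phi\,dm_Y\int_Y\psi\,dm_Y$, valid for every ergodic $G$-action because the proportion of $F_t$ lying near any partial factor tends to zero — and proving this requires an induction over factors (on $L^2_0(Y)^{G_1}$ the complementary factor has no invariant vectors, etc.), which is precisely the content of the Gorodnik--Nevo ergodic theorems \cite{GorodnikN-10} that the paper invokes. The paper's proof runs: weak admissibility of $(F_t)$ and almost-everywhere goodness for $(Y,m_Y)$ (both from \cite{GorodnikN-10}), an upgrade from almost every point to \emph{every} point — in particular the basepoint $y_0$, which is exactly what your count needs — using quasi-uniformity and the identity $(G\times U)\Gamma=G\times H$ coming from density of $\Gamma_H$ (Lemma \ref{everywhere}), and then genericity of $C_c(G)$ (Lemma \ref{lemma:genericity}) feeding into Theorem \ref{approxthm1}. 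If you replace your Howe--Moore step by this everywhere-goodness statement, your counting argument does go through and gives a valid alternative proof; as written, however, the mixing claim is false for a class of model sets the theorem is asserted to cover.
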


\subsection{A general formula for the auto-correlation of regular model sets} 

We conclude this first part with an explicit formula for the auto-correlation of a regular model set $P_0 \subset G$ associated with a quadruple $(G, H, \Gamma, W_0)$ as above in terms of the associated parameter space $Y := (G \times H)/\Gamma$ the associated parameter space. This formula will be used in the sequel \cite{BHP2} to relate the spherical diffraction of a regular model set to the automorphic spectrum of the underlying lattice. Given a Riemann-integrable function $F : G \times H \to \C$ with compact support we denote by
\[
\mathcal P_\Gamma F: Y \to \C, \quad \mathcal P_\Gamma F((g,h)\Gamma) := \sum_{\gamma \in \Gamma} F((g,h)\gamma)
\]
its periodization over $\Gamma$.
\begin{theorem}[Auto-correlation formula for regular model sets]
\label{main2} The auto-correlation $\eta_{P_0}$ of the regular model set $P_0$ as above is uniquely determined by the formula
\begin{equation}
\eta_{P_0}(f^* \ast f) = \|\mathcal P_\Gamma(f\otimes \chi_{W_0})\|^2_{L^2(Y)} \quad (f\in C_c(G)).
\end{equation}
\end{theorem}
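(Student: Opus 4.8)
The plan is to reduce the formula to the definition of the auto-correlation and then transport the resulting integral to the parameter space $Y$ using the measurable isomorphism of Theorem \ref{main1}. By that theorem the punctured hull $X^\times = X^\times_{P_0}$ carries a \emph{unique} $G$-invariant probability measure $\nu$, so $\eta_{P_0} = \eta_\nu$ is well-defined and, by the defining identity of the auto-correlation,
\[
\eta_{P_0}(f^{*} \ast f) = \|\mathcal P f\|^2_{L^2(X^\times, \nu)} = \int_{X^\times} |\mathcal P f(P)|^2 \, d\nu(P) \qquad (f \in C_c(G)).
\]
Let $\beta \colon (Y, m_Y) \to (X^\times, \nu)$ be the parametrization map constructed in the proof of Theorem \ref{main1}, which realizes the measurable $G$-isomorphism and in particular pushes $m_Y$ forward to $\nu$. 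Changing variables along $\beta$ gives
\[
\eta_{P_0}(f^{*} \ast f) = \int_Y |\mathcal P f(\beta(y))|^2 \, dm_Y(y),
\]
so the theorem reduces to the $m_Y$-a.e.\ pointwise identity
\[
\mathcal P f \circ \beta = \mathcal P_\Gamma(f \otimes \chi_{W_0}) \quad \text{in } L^2(Y, m_Y).
\]

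To establish this identity I would unwind both sides explicitly. Writing $\pi_G,\pi_H$ for the coordinate projections of $G \times H$, and using the explicit description of the parametrization underlying Theorem \ref{main1}, a representative $(g,h)$ of $y = (g,h)\Gamma$ yields
\[
\beta(y) = \big\{\, g\gamma_1 \;:\; \gamma = (\gamma_1,\gamma_2) \in \Gamma,\ h\gamma_2 \in W_0 \,\big\},
\]
which at the base point $(e,e)\Gamma$ recovers $P_0 = \pi_G(\Gamma \cap (G \times W_0))$ and is manifestly equivariant for the left $G$-action $t\cdot(g,h)\Gamma = (tg,h)\Gamma$. Since $\pi_G|_\Gamma$ is injective, applying $\mathcal P f$ and regrouping the sum over $\Gamma$ gives
\[
\mathcal P f(\beta(y)) = \sum_{\gamma=(\gamma_1,\gamma_2)\in\Gamma} f(g\gamma_1)\,\chi_{W_0}(h\gamma_2) = \sum_{\gamma\in\Gamma} (f \otimes \chi_{W_0})\big((g,h)\gamma\big) = \mathcal P_\Gamma(f \otimes \chi_{W_0})(y),
\]
which is precisely the required identity. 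Note that $f \otimes \chi_{W_0}$ is Riemann-integrable with compact support, since $f \in C_c(G)$ and the regularity of $P_0$ forces $m_H(\partial W_0)=0$; hence the right-hand side is a bona fide element of $L^2(Y,m_Y)$, being the periodization over $\Gamma$ of a bounded compactly supported function and thus lying in $L^1 \cap L^\infty(Y) \subset L^2(Y)$.

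The main obstacle, and essentially the only delicate point, is that the explicit formula for $\beta(y)$ is valid only away from the \emph{singular} points of the parametrization, namely those $y = (g,h)\Gamma$ for which some $\gamma_2$ satisfies $h\gamma_2 \in \partial W_0$: at such points the window-membership condition is ambiguous and the fiber of the parametrization over the corresponding configuration may fail to be a single point. However, the regularity hypothesis $m_H(\partial W_0) = 0$ ensures that this singular set, being a countable union over $\gamma \in \Gamma$ of the $m_Y$-null sets $\{(g,h)\Gamma : h\gamma_2 \in \partial W_0\}$, is itself $m_Y$-null; hence the displayed identity holds $m_Y$-almost everywhere, which is all that the $L^2$-computation requires. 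Finally, since $\eta_{P_0}$ is a positive-definite Radon measure it is determined by its values on the convolutions $f^{*} \ast f$ with $f \in C_c(G)$, so the displayed formula indeed determines $\eta_{P_0}$ uniquely, completing the proof.
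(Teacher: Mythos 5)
Your proof is correct and takes essentially the same route as the paper: the paper likewise reduces the statement to the identity $\eta_\nu(f^*\ast f)=\|\mathcal Pf\|^2_{L^2(X^\times,\nu)}$ (Proposition \ref{PropAutocor}) and then establishes the almost-everywhere pointwise identity $\mathcal Pf = \mathcal P_\Gamma(f\otimes\chi_{W_0})\circ\beta$ on the conull set of non-singular points (Lemma \ref{PeriodizationLemma}), using exactly your ingredients --- the window description of non-singular configurations, injectivity of $\pi_G|_\Gamma$, and nullity of the singular set via Jordan measurability of $W_0$. The only cosmetic difference is that you write the computation on the side of $Y$ via the inverse parametrization $Y^{\rm ns}\to X^{\rm ns}$, whereas the paper computes on the side of $X^\times$ with $\beta\colon X^\times\to Y$.
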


\subsection{Organization of the article}

This article is organized as follows. In Section \ref{SecModelHull} we introduce model sets and discuss several classes of examples. We then collect some basic structural properties concerning the hull of a regular model set. This knowledge is then applied in Section \ref{SecHullParam} to construct the parametrization map of such a hull and to deduce unique ergodicity and (in the uniform case) minimality. In the construction of the parametrization map we use an alternative description of the Chabauty--Fell topology on the hull of a model set, which is discussed in detail in Appendix \ref{AppendixLocalTopology}. Section \ref{SecAutocorrelation} is devoted to the construction of the auto-correlation measure and the proof of Theorem \ref{main2}. Finally, in Section \ref{SecApprox} we establish several approximation theorems for the auto-correlation, including Theorem \ref{ThmLieGroupsIntro}.

\subsection{Notational conventions} The following notational conventions will be applied throughout this article:

All function spaces are complex-valued and all inner products are anti-linear in the first variable. Given a locally compact space $X$ we denote by $C_c(X)$, $C_0(X)$ and $C_b(X)$ the function space of compactly supported continuous functions, continuous functions vanishing at infinity and continuous bounded functions respectively.

Given a group $G$ and a function $f: G \to \C$ we denote by $\bar f$, $\check f$ and $f^*$ respectively the functions on $G$ given by \[\bar f(g) := \overline{f(g)}, \quad \check f(g) := f(g^{-1}) \quad \text{and} \quad f^*(g) := \overline{f(g^{-1})}.\] Given an action of $G$ on a set $Z$ we define a $G$-action on complex-valued functions on $Z$ by $g.f(z) := f(g^{-1}.z)$. Moreover we denote by $Z^G \subset Z$ the subset of $G$-invariants. 

If $(X, \nu)$ is a measure space and $f,g \in L^2(X, \nu)$, then we denote by \[\langle f, g \rangle_{X} := \langle f, g \rangle_{(X, \nu)} := \int_X \bar f \cdot g \, d\nu\]  the $L^2$-inner product. Given a subset $A \subset X$ we denote by $\chi_A$ its characteristic function. 

If $G$ is a locally compact, second countable group, then we denote by $m_G$ some fixed choice of left-Haar measure on $G$ (normalized to total mass $1$ in the compact case). We
then denote by $\mathcal{C}(G)$, $\mathcal O(G)$ and $\mathcal K(G)$ the sets of closed, open and compact subsets of $G$ respectively. We also denote by $e$ the identity element of $G$ and by $\mathfrak U(G) = \mathfrak U_e(G)$ the identity neighbourhood filter of $G$. 

\subsection*{Acknowledgement.} We thank  Amos Nevo and Barak Weiss for numerous discussions and their interest in our work. We also profited from discussions with Michael Baake, Uri Bader, Daniel Lenz, Christoph Richard, Eitan Sayag, Boris Solomyak and Andreas Str\"ombergsson. We also thank the mathematics departments of Chalmers University and Technion, Haifa for their hospitality during our respective visits. Finally we thank the organizers and the participants of the Oberwolfach Arbeitsgemeinschaft on ``Mathematical quasi-crystals'' for the opportunity to present and discuss our work.

\section{Model sets and their hulls} \label{SecModelHull}

\subsection{Delone sets in groups}
Let $(X, d)$ be a metric space.  Given $R>0$ and $x \in X$ we denote by $B_R(x)$ the open $d$-ball of radius $R$ around $x$, and given $A \subset G$ we denote by $N_R(A) := \bigcup_{x \in A} B_r(A)$ the $R$-neighbourhood of $A$. \begin{definition}\label{Delone}
Let $r, R >0$. A non-empty subset $P \subset X$ is called
\begin{enumerate}
\item $r$-\emph{uniformly discrete} if $d(x, y) \geq r$ for all $x,y \in P$;
\item $R$-\emph{relatively dense} if $N_R(P) = G$;
\item a $(r, R)$-\emph{Delone set} if it is both $r$-uniformly discrete and $R$-relatively dense.
\end{enumerate}
It is called \emph{uniformly discrete}, \emph{relatively dense} or a \emph{Delone set} if it has the respective property for some $r, R>0$.
\end{definition}
Given a locally-compact second countable (lcsc) group $G$, let us call a metric $d$ on $G$ \emph{admissible} if it is proper (i.e. closed balls are compact), left-invariant and induces the given topology on $G$. We call a subset $P \subset G$ \emph{uniformly discrete/relatively dense/a Delone set} if it has the corresponding property for some admissible metric on $G$, in which case it actually has the corresponding property for any admissible metric on $G$. These properties then admit the following purely topological characterizations (see e.g. \cite[Prop. 2.2 and Lemma 2.3]{BH}): $P \subset G$ is uniformly discrete if and only if  the identity $e \in G$ is not an accumulation point of $P^{-1}P$. Equivalently, there exists an open subset $V \subset G$ such that $|P \cap gV| \leq 1$ for all $g \in G$. (We then say that $P$ is \emph{$V$-discrete}.) On the other hand, $P$ is relatively dense if and only if it is \emph{left-syndetic}, meaning that 
there exists a compact subset $K \subset G$ such that $PK  = G$. 

In our further study of Delone sets in lcsc groups we will need the following related notions of discreteness:
\begin{definition} 
Let $G$ be a lcsc group and $P \subset G$ be a subset.
\begin{enumerate}
\item $P$ is called \emph{locally finite} if it is closed and discrete. 
\item $P$ is called \emph{(left-)uniformly locally finite} if for some (hence any) compact subset $K \subset G$ there exists $C>0$ such that $|P \cap gK| < C$ for all $g \in G$. (We then say that $P$ is \emph{$(K,C)$-locally finite.}) 
\item $P$ has \emph{(left-) finite local complexity} (FLC) if $P^{-1}P$ is locally finite.
\end{enumerate}
\end{definition}
We then have the chain of implications
\begin{eqnarray}
&& P \text{ has left-FLC} \,\Rightarrow\, P^{-1}P \text{ discrete} \,\Rightarrow\, P \text{ uniformly discrete} \nonumber\\
&&  \,\Rightarrow\, P \text{ left-uniformly locally finite} \,\Rightarrow\, P \text{ locally finite} \,\Rightarrow\, P \text{ discrete}.\label{Discreteness}
\end{eqnarray}
Analogous characterizations hold with left replaced by right (where we then have to define uniform discreteness with respect to a right-invariant metric and replace $P^{-1}P$ by $PP^{-1}$).

\subsection{Model sets}\label{SecModel} We recall that a discrete subgroup $\Gamma$ of a lcsc group $G$ is a \emph{lattice} if $G/\Gamma$ admits a $G$-invariant probability measure, and a \emph{uniform lattice} if $G/\Gamma$ is moreover compact. We also recall that every discrete subgroup of a lcsc group is necessarily closed, hence locally finite.

\begin{definition} A \emph{cut-and-project-scheme} is a triple $(G, H, \Gamma)$ where $G$ and $H$ are lcsc groups and $\Gamma < G \times H$ is a lattice which projects injectively to $G$ and densely to $H$. A cut-and-project scheme is called \emph{uniform} if $\Gamma$ is moreover a uniform lattice.
\end{definition}
\begin{remark}[Cut-and-project schemes from irreducible lattices]\label{RemarkMNST} The assumptions on the projections of $\Gamma$ can often be arranged. Assume for example that we start from a lattice $\Gamma_0 < G\times H_0$ which is irreducible in the sense that it projects densely to both factors, and denote by $p_G$ and $p_{H_0}$ the coordinate projections of $G \times H_0$. Then $\Gamma_1 := \ker(p_G|_{\Gamma_0})$ is a normal subgroup of $\Gamma_0$, and $p_{H_0}(\Gamma_1)$ is a normal subgroup of $H_0$. If $\Gamma_1$ is finite, then we obtain a cut-and-project scheme
\[
(G, H_0/p_{H_0}(\Gamma_1), \Gamma_0/\Gamma_1).
\]
In certain situations of interest, finiteness of $\Gamma_1$ holds automatically. For example, if $G$ and $H_0$ are semisimple Lie groups, then $\Gamma_0$ is a higher rank lattice and  $\Gamma_1$ is of infinite index in $\Gamma_0$ since $\Gamma_0/\Gamma_1$ projects densely to $G$. It thus follows from Margulis' normal subgroup theorem that $\Gamma_1$ is finite. 
\end{remark}

Given a cut-and-project scheme $(G, H, \Gamma)$ we denote by $\pi_G$, $\pi_H$ the coordinate projections of $G \times H$ and set $\Gamma_G := \pi_G(\Gamma)$ and $\Gamma_H := \pi_H(\Gamma)$. We then define a map $\tau: \Gamma_G \to H$ as $\tau := \pi_H \circ (\pi_G|_\Gamma)^{-1}$. Note that the image of $\tau$ is precisely $\Gamma_H$; in the abelian case this map is sometimes called the ``$*$-map''.

\begin{definition} Let  $(G, H, \Gamma)$ be a cut-and-project scheme with associated ``$*$-map'' $\tau: \Gamma_G \to H$. Given a compact subset $W_0 \subset H$, the pre-image
\[
P_0(G, H, \Gamma, W_0) := \tau^{-1}(W_0) \subset G
\]
is called a \emph{weak model set}, and $W_0$ is called its \emph{window}. It is called a \emph{weak uniform model set} if $(G, H, \Gamma)$ is uniform.
\end{definition}
Every weak model set has finite local complexity \cite[Prop. 2.13]{BH}, but it need not be relatively dense, even if it is uniform and $G$ and $H$ are abelian. For example, the set of visible lattice points in $\R^2$ is a weak uniform model set which is not relatively dense, see \cite[Prop. 10.4]{BaakeG-13}. While general weak model sets have received much attention recently, we will focus on the following more classical subclasses.
\begin{definition}\label{def_regular} A weak (uniform) model set is called a \emph{(uniform) model set} if its window has non-empty interior. It is called a \emph{regular model set} if $W_0$ is 
Jordan-measurable with dense interior, aperiodic (i.e. and ${\rm Stab}_H(W_0) = \{e\}$ and \emph{$\Gamma$-regular}, meaning that
\[\partial W_0 \cap \pi_H(\Gamma) = \emptyset.\]
\end{definition}
Some of the theory developed in the sequel could be developed in the generality of arbitrary model sets. However, to obtain our strongest results, such as unique ergodicity and minimality of the hull, we need additional regularity assumptions on the window, hence we make the 
\begin{convention}\label{regular} From now on all model sets (uniform or not) are assumed to be regular.
\end{convention}

\subsection{Examples of model sets}\label{SubsecExamples} Let us provide some examples of model sets and cut-and-project schemes. The classical theory as developed by Meyer \cite{Meyer-70} is concerned with abelian cut-and-project schemes. A particularly interesting class of examples arises from arithmetic lattices in lcsc abelian groups. For example, $\Gamma = \mathbb Z[\sqrt 2]$ embeds as a lattice into $\R \times \R$ via $a+b\sqrt 2 \mapsto (a+b\sqrt 2, a-b\sqrt 2)$ and $\Gamma = \mathbb Z[1/p]$ embeds as a lattice into $\R \times \mathbb Q_p$ leading to different model sets in $\R$. These arithmetic examples admit various generalizations to non-abelian cut-and-project schemes as follows.

\begin{example}[Model sets in nilpotent Lie groups] Denote by $H(R)$ the $3$-dimensional Heisenberg group over a commutative ring $R$, i.e. the subgroup of $M_3(R)$ consisting of upper triangular matrices with $1$s on the diagonal. Then $H(R)$ is $2$-step nilpotent and the (uniform) lattice embeddings $ \mathbb Z[\sqrt 2] \hookrightarrow \R \times \R$ and $\mathbb Z[1/p] \hookrightarrow \R \times \mathbb Q_p$ induce (uniform) lattice embeddings
\[
H(\mathbb Z[\sqrt 2])\hookrightarrow H(\R) \times H(\R) \quad \text{and} \quad H(\mathbb Z[1/p]) \hookrightarrow H(\R) \times H(\mathbb Q_p),
\]
leading to different kinds of uniform model sets in the real Heisenberg group $H(\R)$. 
\end{example}
\begin{remark} One major difference between the abelian and the nilpotent case is that while every abelian Lie group admits a (uniform) lattice, there exist nilpotent Lie groups which do not admit any (uniform) lattices, but which do admit (uniform) model sets. For explicit examples in dimension 7, see \cite[Sec. 2.4]{BH}.
\end{remark}
While every model set in an abelian or nilpotent Lie group is uniform, this is not the case for general amenable group. In fact, the following example which modifies a construction of   Bader, Caprace, Gelander and Mozes \cite[Example 3.5]{BenoistQuint} shows that there even exist non-uniform model sets in compact-by-abelian groups.

\begin{example}[A non-uniform model set in a compact-by-abelian group]
Let $S$ be a set of primes which is ``thin'' in the sense that $\sum_{p \in S} \frac 1 p < \infty$ and set \[V := \bigoplus_{ p \in S} \F_p \quad \text{and} \quad K:= \prod_{p \in S} \F_p^\times,\]
so that $V$ is abelian and $K$ is compact and acts by coordinate-wise multiplication on $V$. Set $G:= V \rtimes K$ and, given $\gamma \in G$, write $\gamma = (\gamma_p)_{p \in S}$, where $\gamma_p \in  \F_p \rtimes \F_p^\times$. We claim that $G$ contains regular non-uniform model sets. For this we construct a cut-and-project scheme $(G, H, \Gamma)$ with $H := G$ and $\Gamma$ given as follows. Let $\Gamma_0 < G$ be the dense subgroup given by
\[
\Gamma_0 := \bigoplus_{p \in S} \F_p \rtimes \F_p^\times < G,
\]
and define a dense embedding $\tau: \Gamma_0 \to H$ by
\[
\tau((b_p, a_p)_{p \in S})  = ((1,1) (b_p, a_p)(1,1)^{-1})_{p \in S} = (b_p+1-a_p, a_p)_{p \in S}.
\]
Then we set 
\[\Gamma := \{(\gamma, \tau(\gamma))\mid \gamma \in \Gamma_0\} < G\times H.\]
By construction, $\Gamma$ projects injectively to $G$ and densely to $H$. It thus remains to show that it is a non-uniform lattice in $G \times H$. Discreteness of $\Gamma$ is an easy exercise. Unravelling definitions we see that two points $(u,v), (u',v') \in V \oplus V$ are in the same $\Gamma$-orbit if and only if for every $p \in S$ there exist elements $b_p \in \F_p$ and $a_p \in \F_p^\times$ such that
\begin{equation*}\label{OrbitEquation}
b_p + a_pu_p = u'_p \quad \text{and}\quad b_p+1-a_p + a_pv_p = v_p'.
\end{equation*}
From these formulas it is not hard to deduce that the $\Gamma$-orbits in $V \oplus V$ are exactly the sets
\[
S_I := \{(u,v)\mid v_p-u_p = 1 \Leftrightarrow p \in I\},
\]
where $I \subset S$ is a finite subset. In particular, $\Gamma$ is not cocompact, since there are infinitely many such orbits. To show that $\Gamma$ has finite covolume it suffices to show that
\[
\sum_{[(u, v)] \in \Gamma \backslash V \oplus V} \frac{1}{|\Gamma_{(u,v)}|} < \infty,
\]
where $\Gamma_{(u,v)}$ denotes the stabilizer of $(u,v) \in V \oplus V$ in $\Gamma$ and the sum is over all $\Gamma$-orbits in $V \oplus V$ (see \cite[Sec. 1.5]{BassLubotzky}). 
Given $(u,v) \in S_I$ we have
\[
\Gamma_{(u,v)} = \bigoplus_{p \in S}\{(b_p, a_p) \in \F_p\times \F_p^\times \mid b_p + a_pu_p = u_p \quad \text{and}\quad b_p+1-a_p + a_pv_p = v_p\},
\]
from which one deduces that $|\Gamma_{(u,v)}| = \prod_{p \in I}(p-1)$. We obtain
\[
\sum_{[(u, v)] \in \Gamma \backslash V \oplus V} \frac{1}{|\Gamma_{(u,v)}|}  = \underset{\text{finite}}{\sum_{I \subset S}} \frac{1}{\prod_{p \in I}(p-1)} < \infty,
\]
which shows that $\Gamma$ has finite covolume in $G \times H$.
\end{example}
We now turn to model sets in non-amenable group. 
\begin{example}[Model sets in real semisimple Lie groups]
Similarly to the abelian case one can construct arithmetic examples of cut-and-project in semisimple Lie groups. If $G$ is an arbitrary semisimple Lie group without compact factors, then by \cite[Cor. 18.7.4]{WitteMorris} there always exists an irreducible uniform lattice in $G\times G$. By Remark \ref{RemarkMNST} this implies that $G$ contains uniform model sets. Similarly, non-uniform model sets can be constructed starting from irreducible non-uniform lattices such as
\[
{\rm SL}_n( \mathbb Z[\sqrt 2]) < {\rm SL}_n(\R) \times {\rm SL}_n(\R) \quad \text{and} \quad {\rm SL}_n(\mathbb Z[1/p])<{\rm SL}_n(\R) \times {\rm SL}_n(\mathbb Q_p).
\]
\end{example}

\begin{example}[Non-unimodular model sets in semisimple p-adic groups]
Starting from the irreducible lattice $ {\rm SL}_n(\mathbb Z[1/p])< {\rm SL}_n(\mathbb Q_p) \times {\rm SL}_n(\R) $ we can construct non-uniform model sets in ${\rm SL}_n(\Q_p)$. This is remarkable, since ${\rm SL}_n(\Q_p)$ does not admit any non-uniform lattices.
\end{example}

\begin{example}[A geometric example] There exist examples of model sets in geometrically defined non-amenable totally-disconnected lcsc groups, such as automorphism groups of regular trees. These examples are very different in nature from the arithmetically flavored examples above. For example, while all arithmetic lattices are residually finite, Burger and Mozes \cite{BurgerM-001, BurgerM-002} have constructed lattices in products of automorphism groups of regular trees which are simple (and thus as far from residually finite as possible), and these lead to interesting examples of model sets.
\end{example}
\begin{example}[Hartman sets] Yet another very different (but well-understood) class of examples arises from discrete groups $\Gamma$ which embed densely into a compact group $K$. We can then view $\Gamma$ as a lattice in $G \times H$, where $G := \Gamma$ and $H:= K$ and study the corresponding model sets, which are called \emph{Hartman sets}. We refer the interested reader to the survey \cite{Winkler}.
\end{example}

\subsection{The hull of a subset of finite local complexity}\label{SubsecHull} If $G$ is a lcsc group, then there is a natural compact metrizable topology $\tau_{CF}$ on the collection $\mathcal C(G)$ of closed subsets of $G$ which is called the \emph{Chabauty--Fell topology} and defined as follows. Recall that we denote by  $\mathcal O(G)$ and $\mathcal K(G)$ the collections of  open, respectively compact subsets of $G$. Given $V \in \mathcal O(G)$ and $K \in \mathcal K(G)$ we define subsets $U_V, U^K \subset \mathcal C(G)$ by
\[
U_V = \{C \in \mathcal C(G)\mid C \cap V \neq \emptyset\} \quad \text{and} \quad U^K = \{C \in \mathcal C(G)\mid C \cap K = \emptyset\}.
\] 
Then the Chabauty--Fell topology on $\mathcal C(G)$ is generated by the collection of basic open sets
\[\{U_V\mid{V \in \mathcal O(G)}\} \cup \{U^K\mid{K \in \mathcal K(G)}\}.\]
The group $G\times G$ acts on the compact space $\mathcal C(G)$ by
\[
(g,h).\Lambda = g\Lambda h^{-1},
\]
and this action is jointly continuous, since it maps basic open sets to basic open set:
\[
(g,h).U_{V} = U_{gVh^{-1}}, \quad (g,h).U^K = U^{gKh^{-1}} \quad (g,h\in G).
\]
Restricting the action of $G \times G \curvearrowright \mathcal C(G)$ to the factors and the diagonal we obtain three topological dynamical systems over $G$, where $G$ acts from the left, the right or by conjugation. The former two dynamical systems are isomorphic via the isomorphisms $P \mapsto P^{-1}$, but the conjugation system has very different properties. Here we will focus on the action of $G$ on the left as given by $(g, P) \mapsto gP$ for $g \in G$, $P \in \mathcal C(G)$.

\begin{definition} Given a closed subset $P_0 \subset G$, the \emph{(right-)hull} $X_{P_0}$ of $P_0$ is defined as the closure of the orbit $G.P_0$ in $\mathcal C(G)$, and the \emph{punctured (right-)hull} $X_{P_0}^\times$ is defined as $X_{P_0}^\times := X_{P_0} \setminus\{\emptyset\}$.
\end{definition}
Note that by definition the hull of a closed subset is always a compact metrizable $G$-space, since it is a closed subset of $\mathcal C(G)$, and thus $X_{P_0}^\times$ is always locally compact and metrizable. By \cite[Prop. 4.4]{BH} we have $\emptyset \in X_{P_0}$ if and only if $P_0$ is not relatively dense. In particular the punctured hull $X_{P_0}^\times$ of a model set $P_0$ is compact if and only if $P_0$ is uniform. While the hull of a uniformly finite subset of $G$ may contain non-discrete subsets of $G$, the hull of an FLC subset always consists of FLC sets:
\begin{proposition}[Orbit closures of FLC sets, {\cite[Lemma 4.6]{BH}}] \label{PropHull} Let $G$ be a lcsc group and let $P_0 \subset G$ be an FLC subset. Then for every $P \in X_{P_0}$ we have 
\[
P^{-1}P \subset P_0^{-1}P_0.
\]
In particular, every $P \in X_{P_0}$ has finite local complexity, and if $\Gamma_G$ denotes the subgroup of $G$ generated by $P_0$ and  $P \in X_{P_0}$, then $p^{-1}P \subset \Gamma_G$ for every $p \in P$.\qed
\end{proposition}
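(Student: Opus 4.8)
The plan is to reduce everything to the single inclusion $P^{-1}P \subset P_0^{-1}P_0$, since the remaining assertions follow formally from it. First I would record that on the orbit itself the inclusion is in fact an equality: for $g \in G$ one has $(gP_0)^{-1}(gP_0) = P_0^{-1}g^{-1}gP_0 = P_0^{-1}P_0$. Thus the statement amounts to showing that this inclusion is preserved under passing to the Chabauty--Fell closure. Since $\mathcal C(G)$ is compact metrizable, every $P \in X_{P_0}$ is a sequential limit $P = \lim_n g_nP_0$ of orbit points, and I would work with such a sequence throughout. (If $P = \emptyset$ the inclusion is trivial, so one may assume $P \neq \emptyset$.)

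The key tool is the lower-semicontinuity property of Chabauty--Fell convergence: if $g_nP_0 \to P$ and $x \in P$, then there exist $x_n \in g_nP_0$ with $x_n \to x$. I would derive this directly from the definition of the topology: for any open neighbourhood $V$ of $x$ the set $U_V$ is open and contains $P$, so eventually $g_nP_0 \in U_V$, i.e.\ $g_nP_0 \cap V \neq \emptyset$; choosing a countable decreasing neighbourhood basis $(V_k)$ at $x$ and selecting points of $g_nP_0 \cap V_k$ produces the desired sequence $x_n \to x$. Granting this, take any $z \in P^{-1}P$, write $z = p^{-1}q$ with $p, q \in P$, and choose $p_n, q_n \in g_nP_0$ with $p_n \to p$ and $q_n \to q$. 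By continuity of the map $(a,b) \mapsto a^{-1}b$ we get $p_n^{-1}q_n \to p^{-1}q = z$, while $p_n^{-1}q_n \in (g_nP_0)^{-1}(g_nP_0) = P_0^{-1}P_0$ by the orbit computation. Since $P_0$ has finite local complexity, $P_0^{-1}P_0$ is closed, so $z \in P_0^{-1}P_0$; this proves $P^{-1}P \subset P_0^{-1}P_0$.

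From here the remaining claims are immediate. A closed discrete set has no accumulation points in $G$, and this property is inherited by subsets; hence $P^{-1}P \subset P_0^{-1}P_0$ is again closed and discrete, i.e.\ $P$ has finite local complexity. For the last assertion, $P_0 \subset \Gamma_G$ gives $P_0^{-1}P_0 \subset \Gamma_G$, and so for any $p \in P$ we have $p^{-1}P \subset P^{-1}P \subset P_0^{-1}P_0 \subset \Gamma_G$.

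The only genuinely delicate point is the lower-semicontinuity property of the Chabauty--Fell topology and the resulting approximation of $p$ and $q$ by points of the approximating sets $g_nP_0$; once the correct characterization of convergence is in hand, the rest is a routine continuity-and-closedness argument. I would therefore expect the verification of that convergence property (and the care needed to extract the sequences $x_n$ simultaneously for the two limits $p$ and $q$) to be the main technical step.
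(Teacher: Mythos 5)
Your proof is correct. Note that the paper does not prove this proposition internally at all: it is quoted from \cite[Lemma 4.6]{BH} and stated without proof, so there is no in-paper argument to compare against. Your route---the sequential description of the orbit closure in the compact metrizable space $\mathcal{C}(G)$, lower semicontinuity of Chabauty--Fell limits (every $x \in P = \lim_n g_n P_0$ is a limit of points $x_n \in g_n P_0$, extracted by a diagonal argument over a countable neighbourhood basis), the orbit identity $(gP_0)^{-1}(gP_0) = P_0^{-1}P_0$, and closedness of $P_0^{-1}P_0$ coming from the FLC hypothesis---is exactly the standard argument the citation stands in for, and your deduction of the remaining claims (a subset of a set with no accumulation points has no accumulation points, hence is locally finite; and $p^{-1}P \subset P^{-1}P \subset P_0^{-1}P_0 \subset \Gamma_G$) is also sound.
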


\subsection{The hull of a model set and the canonical transversal} Since model sets have finite local complexity, the discussion of the previous subsection applies in particular to model sets. Given a model set $P_0 = P_0(G, H, \Gamma, W_0)$, the subgroup of $G$ generated by $P_0$ is given by $\Gamma_G = \pi_G(\Gamma)$. It thus follows from Proposition \ref{PropHull} that the subset
\begin{equation}\label{CanTransv}
\mathcal T := \{P \in X^\times_{P_0} \mid P \subset \Gamma_G\} \quad \subset \quad X^\times_{P_0}.
\end{equation}
intersects every $G$-orbit in $X_{P_0}$. We refer to $\mathcal T$ as the \emph{canonical transversal} in $X^\times_{P_0}$. We can summarize our discussion so far as follows:
\begin{corollary}\label{HullModel} Let $P_0 = P_0(G, H, \Gamma, W_0)$ be a model set in a lcsc group $G$. 
\begin{enumerate}[(i)]
\item $X^\times := X^\times_{P_0}$ is a locally compact metrizable $G$-space consisting of FLC sets. 
\item Every $G$-orbit in $X^\times$ intersects the canonical transversal $\mathcal T$ given by \eqref{CanTransv}. 
\item $X^\times$ is compact if and only if $\Gamma < G \times H$ is a uniform lattice.\qed
\end{enumerate}
\end{corollary}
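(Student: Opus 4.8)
The plan is to read off all three assertions from the structural facts already assembled above, the essential inputs being Proposition \ref{PropHull}, the identity $\Gamma_G = \pi_G(\Gamma)$ for the subgroup of $G$ generated by a model set, and the characterization $\emptyset \in X_{P_0} \Leftrightarrow P_0$ not relatively dense from \cite[Prop. 4.4]{BH}. For (i), I would recall that $\mathcal C(G)$ with the Chabauty--Fell topology is compact and metrizable and that $X_{P_0} = \overline{G.P_0}$ is a closed, hence compact, $G$-invariant subspace, the $G$-action being the (jointly continuous) restriction of the $G\times G$-action to the left factor. Since $\emptyset$ is a $G$-fixed point and singletons are closed in the metrizable space $X_{P_0}$, the complement $X^\times = X_{P_0}\setminus\{\emptyset\}$ is an open $G$-invariant subspace, and is therefore a locally compact metrizable $G$-space. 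That every $P \in X^\times$ is FLC is immediate from Proposition \ref{PropHull}: a model set is FLC, so $P^{-1}P \subset P_0^{-1}P_0$ is locally finite.

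For (ii), given any $P \in X^\times$ I would choose a point $p \in P$ (possible since $P \neq \emptyset$) and pass to the translate $p^{-1}P$, which lies in the same $G$-orbit as $P$ under the left action $(g,P)\mapsto gP$. By Proposition \ref{PropHull} applied to $P \in X_{P_0}$ we have $p^{-1}P \subset \Gamma_G$, and since $e = p^{-1}p \in p^{-1}P$ this translate is non-empty, hence $p^{-1}P \in X^\times$ and in fact $p^{-1}P \in \mathcal T$. Thus the orbit of $P$ meets the canonical transversal. Here the identification $\Gamma_G = \pi_G(\Gamma)$ is exactly what matches the group appearing in Proposition \ref{PropHull} with the one defining $\mathcal T$ in \eqref{CanTransv}.

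For (iii), since $X_{P_0}$ is compact, $X^\times$ is compact precisely when $\{\emptyset\}$ is open in $X_{P_0}$, i.e. when $\emptyset$ is either absent or isolated. But every translate $gP_0$ is non-empty, so whenever $\emptyset \in X_{P_0} = \overline{G.P_0}$ it is necessarily a non-isolated limit point; consequently $X^\times$ is compact if and only if $\emptyset \notin X_{P_0}$. By \cite[Prop. 4.4]{BH} this holds exactly when $P_0$ is relatively dense, which for a regular model set is in turn equivalent to $\Gamma$ being a uniform lattice. I expect this last equivalence to be the only step carrying genuine content: the soft topological bookkeeping in (i)--(ii) is routine, whereas the implication that a \emph{non-uniform} regular model set fails to be relatively dense --- so that $\emptyset$ really enters the hull and obstructs compactness --- is where the non-compactness of $(G\times H)/\Gamma$ must be exploited, and I would regard this input, rather than any of the formal manipulations, as the main obstacle.
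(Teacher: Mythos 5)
Your proposal is correct and follows essentially the same route as the paper: there the corollary is stated with a \qed{} as a summary of the preceding discussion, whose ingredients are exactly yours --- local compactness and metrizability of $X^\times$ as an open $G$-invariant subset of the compact metrizable hull, Proposition \ref{PropHull} together with $\Gamma_G = \pi_G(\Gamma)$ for the FLC property and for meeting $\mathcal T$ via $P \mapsto p^{-1}P$, and \cite[Prop.~4.4]{BH} combined with the equivalence ``relatively dense $\Leftrightarrow$ uniform'' for (iii). Your extra care in noting that $\emptyset$ can never be an \emph{isolated} point of the hull, and your identification of that last equivalence as the one substantive input (which the paper, like you, imports from \cite{BH} rather than proving), merely make explicit what the paper leaves implicit.
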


\section{Minimality and unique ergodicity of the punctured hull}\label{SecHullParam}

\subsection{The parametrization map of a model set} 
The goal of this section is to show that the punctured hull of a model set is uniquely ergodic (and moreover minimal if the model set is uniform). This will be achieved by means of a parametrization map, which generalizes the well-known torus parametrization of the abelian theory. This parametrization map is of independent interest, and we will use it to derive a formula for the auto-correlation of the corresponding model set in Section \ref{SecAutocorrelation}.

Throughout this section we fix the following setting. $P_0 = P_0(G, H, \Gamma, W_0)$ is a model set in a lcsc group $G$ (always assumed to be regular by Convention \ref{regular}). We denote by $X^\times = X^\times_{P_0}$ its punctured hull, and by $\mathcal T \subset X^\times$ the canonical transversal given by \eqref{CanTransv}. Finally, we abbreviate by $Y$ the homogeneous space
\[
Y := (G \times H)/\Gamma,
\]
which we refer to as the \emph{parameter space} of $P_0$ and abbreviate by $y_0 := (e,e)\Gamma$ the canonical basepoint of $Y$. The following theorem summarizes the main results of this subsection. 
\begin{theorem}[Properties of the parametrization map]\label{ThmParametrizationMap} There exists a unique $G$-equivariant Borel map $\beta: X^\times \to Y$ which maps $P_0$ to $y_0$ and has a closed graph. This map has the following additional properties:
\begin{enumerate}[(i)]
\item If $Y^{\rm ns} := \{(g,h)\Gamma \in Y\mid h^{-1}W_0 \text{ is }\Gamma\text{-regular}\}$ and $X^{\rm ns} := \beta^{-1}(Y^{\rm ns})$, then \[\beta|_{X^{\rm ns} }: X^{\rm ns} \to Y^{\rm ns}\] is bijective.
\item $P \in \mathcal T$ if and only if there exists $h_P \in H$ such that $\beta(P) = (e, h_P)\Gamma$.
\item If $P \in \mathcal T \cap X^{\rm ns}$ and $h_P$ is as in (ii), then $P = \tau^{-1}(h_P^{-1}W_0)$.
\item It $\Gamma$ is cocompact, then $\beta$ is continuous.
\end{enumerate}
\end{theorem}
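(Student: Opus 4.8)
The plan is to recognize statement (iv) as an instance of the topological closed-graph theorem for maps into a compact Hausdorff space. The genuine analytic content -- that $\beta$ has closed graph -- has already been supplied by the main assertion of Theorem \ref{ThmParametrizationMap}, so the only thing cocompactness of $\Gamma$ buys us is compactness of the target, which is exactly what turns ``closed graph'' into ``continuous''.

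First I would record the two compactness facts implied by cocompactness of $\Gamma$. Since $\Gamma$ is then a uniform lattice, the parameter space $Y = (G \times H)/\Gamma$ is compact and Hausdorff; indeed, as the quotient of the lcsc group $G \times H$ by the cocompact (and automatically closed) subgroup $\Gamma$, it is second countable, locally compact and Hausdorff, hence compact metrizable. Likewise, by Corollary \ref{HullModel}(iii) the punctured hull $X^\times$ is compact, and as a subspace of $\mathcal{C}(G)$ it is metrizable. Thus both source and target are compact metrizable; in particular $X^\times$ is first countable, so continuity of $\beta$ may be tested with sequences.

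Next I would carry out the closed-graph argument. Let $P_n \to P$ in $X^\times$; I claim $\beta(P_n) \to \beta(P)$. Because $Y$ is compact metrizable, it suffices to show that every convergent subsequence $\beta(P_{n_k}) \to y$ must have $y = \beta(P)$. For such a subsequence the pairs $(P_{n_k}, \beta(P_{n_k}))$ converge to $(P, y)$ in $X^\times \times Y$, and since all these pairs lie on the graph of $\beta$, which is closed by Theorem \ref{ThmParametrizationMap}, the limit $(P, y)$ lies on the graph as well; that is, $y = \beta(P)$. Now compactness of $Y$ guarantees that every subsequence of $(\beta(P_n))$ admits a convergent sub-subsequence, and we have just shown that every such limit equals $\beta(P)$; hence the whole sequence converges, $\beta(P_n) \to \beta(P)$. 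As $X^\times$ is metrizable, this sequential continuity yields continuity of $\beta$.

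I do not anticipate a real obstacle here: the difficulty of producing $\beta$ and of verifying the closed-graph property has been absorbed into the earlier parts of Theorem \ref{ThmParametrizationMap}, and cocompactness enters only through the compactness of $Y$. It is nonetheless instructive to see where the argument must break down: when $\Gamma$ is non-uniform, both $Y$ and $X^\times$ are non-compact, the subsequence extraction is no longer available, and one cannot in general promote the Borel map $\beta$ to a continuous one -- which is precisely why continuity is asserted only in the cocompact case.
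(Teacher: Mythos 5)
There is a genuine gap, and it is structural: the statement you were asked to prove is the \emph{entire} theorem --- existence and uniqueness of a $G$-equivariant Borel map $\beta: X^\times \to Y$ with closed graph sending $P_0$ to $y_0$, together with properties (i)--(iv) --- but your proposal proves only item (iv), and it does so by citing ``the main assertion of Theorem \ref{ThmParametrizationMap}'' as an input. That is circular: the existence of $\beta$ and the closed-graph property are precisely (part of) what has to be established. The actual content of the paper's proof is the construction of $\beta$: one takes the orbit closure $Z := \overline{G.(P_0, y_0)} \subset X^\times \times Y$ (working in the local topology, which by Corollary \ref{TopologiesCoincide} gives the same orbit closure as the Chabauty--Fell topology for FLC sets), and shows that every section $Z[P] = \{y \in Y \mid (P,y) \in Z\}$ is a singleton. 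This singleton claim is where the two technical lemmas enter: Lemma \ref{LemmahP} produces, for $P$ in the canonical transversal $\mathcal{T}$, a well-defined element $h_P \in H$ with $Z[P] = \{(e,h_P)\Gamma\}$, and its proof in turn rests on Lemma \ref{LemmaWindowMagic} (a separation statement for translated windows, using density of $\Gamma_H$ and aperiodicity of $W_0$). The general case reduces to $P \in \mathcal{T}$ via Proposition \ref{PropHull}. Only then does one get $Z = \mathrm{gr}(\beta)$, hence existence, equivariance, Borel measurability, and uniqueness (any competing equivariant closed-graph map $\beta'$ with $\beta'(P_0)=y_0$ has $\mathrm{gr}(\beta') \supset Z$, forcing $\beta'=\beta$). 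Items (i)--(iii) then require further work with $\Gamma$-regularity of the windows $h_P^{-1}W_0$, again via Lemma \ref{LemmaRegularityConvergence}.(ii). None of this appears in your proposal, and it cannot be bypassed.

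For what it is worth, your treatment of (iv) in isolation is correct and coincides with the paper's: cocompactness of $\Gamma$ makes $Y$ compact, uniformity of $P_0$ makes $X^\times$ compact (Corollary \ref{HullModel}(iii)), and a closed-graph map between compact metrizable spaces is continuous; your subsequence argument is a correct proof of that closed-graph theorem, and your closing remark about why continuity fails to follow in the non-uniform case is accurate. But as a proof of the stated theorem, the proposal assumes the hard part and supplies only the easy corollary.
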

\begin{remark}
\begin{enumerate}
\item In view of (i), elements of $X^{\rm ns}$ are called \emph{non-singular points} and elements of $Y^{\rm ns}$ are called \emph{non-singular parameters}.
\item The special case $G = \mathbb R^k$, $H = \mathbb R^n$ is classical. In this case, $Y$ is a torus and $\beta: X^\times \to Y$ is known as the \emph{torus parametrization} of the hull. For general locally compact abelian groups $G$, $H$ the construction of a parametrization map $\beta$ is due to Schlottmann \cite{Schlottmann-99} (see also \cite{Robinson} for an earlier special case). In his proof he first establishes minimality of $X^\times$ using compactness (in the form of Gottschalk's criterion), and then uses minimality to establish existence of the parametrization map. 
\item Unlike Schlottmann's proof, the present argument does not require compactness of $X^\times$, nor any a priori knowledge of minimality. Consequently, the argument also applies to non-compact punctured hulls, and minimality comes for free in the case of compact punctured hulls.
\item Our proof of Theorem \ref{ThmParametrizationMap} does not use the full assumptions on $\Gamma$ and $W_0$. We do not need that $\Gamma$ is a lattice (as long as it is discrete and satisfies the other assumptions), nor do we use that $W_0$ is Jordan-measurable. However, both assumptions will be used in the sequel. We need that $W_0$ is Jordan-measurable to obtain that $Y^{\rm ns}$ has full Haar measure in $Y$, and that $\Gamma$ is a lattice to obtain an invariant probability measure on $X$. We therefore do not pursue this additional generality here.
\end{enumerate}
\end{remark}
\subsection{Minimality and unique ergodicity of the hull} 
Let us postpone the proof of Theorem \ref{ThmParametrizationMap} to the end of this section and first deduce the desired consequences concerning minimality and unique ergodicity of the hull. We keep the notation of the previous subsection and recall that the model set $P_0$ is relatively dense in $G$ provided it is a uniform model set.
\begin{proposition}[Minimality properties]\label{Minimality}
The hull of a uniform regular model set is minimal. More precisely:
\begin{enumerate}[(i)]
\item $Y$ is a minimal $G$-space.
\item If $P_0$ is uniform, then $X^\times = X$ is a minimal compact $G$-space.
\item If $P_0$ is non-uniform, then $X^\times$ is not compact and no $G$-orbit in $X^\times$ is pre-compact.
\end{enumerate}
\end{proposition}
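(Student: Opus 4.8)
The plan is to derive all three assertions from the parametrization map $\beta \colon X^\times \to Y$ of Theorem \ref{ThmParametrizationMap}, combined with minimality and (non-)compactness of the parameter space $Y = (G\times H)/\Gamma$. For (i), I would first show that the $G$-orbit of the basepoint $y_0 = (e,e)\Gamma$ is dense. Writing $\Gamma_H = \pi_H(\Gamma)$ for the dense image of $\Gamma$ in $H$, to approximate an arbitrary $(g_1,h_1)\Gamma$ by an orbit point $(g,e)\Gamma$ it suffices, by density of $\Gamma_H$, to pick $\gamma \in \Gamma$ with $\pi_H(\gamma)$ close to $h_1$ and then set $g = g_1\,\pi_G(\gamma)^{-1}$, so that $(g,e)\gamma$ is close to $(g_1,h_1)$. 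Since the left $G$-action and the left $H$-action on $Y$ commute and $H$ acts by homeomorphisms, the $G$-orbit of any point $(g,h)\Gamma$ equals $h\cdot(G\cdot y_0)$, whose closure is $h\cdot\overline{G\cdot y_0} = h\cdot Y = Y$. Hence every orbit is dense and $Y$ is minimal.

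For (ii), uniformity of $P_0$ forces $P_0$ to be relatively dense, so $\emptyset \notin X$ and $X^\times = X$ is compact. As $\Gamma$ is now cocompact, Theorem \ref{ThmParametrizationMap}(iv) makes $\beta$ continuous; being $G$-equivariant with $X$ compact, its image $\beta(X)$ is a nonempty closed $G$-invariant subset of the minimal space $Y$, so $\beta(X) = Y$. The decisive point is that the fibre over $y_0$ is a single point: since $W_0$ is $\Gamma$-regular we have $y_0 \in Y^{\rm ns}$, so any $P$ with $\beta(P) = y_0$ lies in $X^{\rm ns} = \beta^{-1}(Y^{\rm ns})$, on which $\beta$ is a bijection by Theorem \ref{ThmParametrizationMap}(i); thus $\beta^{-1}(y_0) = \{P_0\}$. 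Now if $Z \subseteq X$ is nonempty, closed and $G$-invariant, then $\beta(Z)$ is a nonempty closed invariant subset of $Y$, whence $\beta(Z) = Y \ni y_0$, forcing $P_0 \in Z$ and therefore $X = \overline{G\cdot P_0} \subseteq Z$. So $X$ is minimal.

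For (iii), non-compactness of $X^\times$ is immediate from Corollary \ref{HullModel}(iii). For non-precompactness, observe that $X$ is compact with $X^\times = X \setminus \{\emptyset\}$, so an orbit $G\cdot P$ is non-precompact in $X^\times$ exactly when $\emptyset \in \overline{G\cdot P}$; I must thus produce, for each $P \in X^\times$, translates $g_n P \to \emptyset$ in $\mathcal{C}(G)$. As precompactness depends only on the orbit and every orbit meets the canonical transversal $\mathcal{T}$, I may take $P \in \mathcal{T}$, so that $\beta(P) = (e,h_P)\Gamma$ and $P \subseteq \tau^{-1}(h_P^{-1}\overline{W_0})$ (with equality for non-singular $P$ by Theorem \ref{ThmParametrizationMap}(iii)). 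The key computation is that for compact $K \subseteq G$ one has $gP \cap K \neq \emptyset$ only if the coset $(g,e)\Gamma$ meets the compact set $K \times h_P^{-1}\overline{W_0} \subseteq G\times H$, since points of $P$ correspond, via the injective projection $\pi_G|_\Gamma$, to elements $\gamma \in \Gamma$ with $\pi_H(\gamma) \in h_P^{-1}\overline{W_0}$. Because $\Gamma$ is non-uniform, $Y$ is non-compact, and by (i) the orbit $G\cdot y_0$ is dense, hence not precompact; I would therefore choose $g_n$ with $(g_n,e)\Gamma \to \infty$ in $Y$. Then $(g_n,e)\Gamma$ eventually avoids every fixed compact subset of $G\times H$, in particular $K \times h_P^{-1}\overline{W_0}$, so $g_n P \cap K = \emptyset$ eventually, i.e.\ $g_n P \to \emptyset$.

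I expect the main obstacle in (iii) to be the transfer between ``escaping to infinity in $Y$'' and ``converging to $\emptyset$ in $X$'', together with the treatment of singular transversal points. The coset computation above handles non-singular points cleanly, and the essential extra input is the upper window bound $P \subseteq \tau^{-1}(h_P^{-1}\overline{W_0})$ for \emph{all} $P \in \mathcal{T}$; isolating this inclusion from the construction of $\beta$, rather than only from the stated equality on $X^{\rm ns}$, is the one step demanding care. I also rely on the standard description of compact subsets of $Y = (G\times H)/\Gamma$ as images of compact subsets of $G\times H$, so that $(g_n,e)\Gamma \to \infty$ translates into the coset eventually missing each fixed compact set; this is a routine consequence of properness of the $\Gamma$-action.
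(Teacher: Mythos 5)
Your proposal is correct, but it follows a genuinely different route from the paper in part (iii), so let me compare. The paper proves a single dichotomy: every non-empty \emph{compact} $G$-invariant subset $X_0 \subset X^\times$ equals $X^\times$. Both (ii) and (iii) then fall out at once: in the uniform case $X^\times$ itself is compact, so the dichotomy is exactly minimality, while in the non-uniform case the closure in $X^\times$ of a pre-compact orbit would be such an $X_0$, contradicting non-compactness of $X^\times$. The engine of the dichotomy is not continuity of $\beta$ but its \emph{closed graph}: a closed-graph map sends compact sets to closed sets, so $\beta(X_0)$ is closed, invariant and non-empty, hence equals $Y$ by minimality of $Y$; then $y_0 \in \beta(X_0)$ together with injectivity of $\beta$ over $Y^{\rm ns}$ forces $P_0 \in X_0$, whence $X_0 = X^\times$. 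This one argument covers both cases in a few lines. Your (i) is a hands-on unpacking of the duality principle the paper invokes, and your (ii) is the paper's argument specialised to the cocompact case, where continuity of $\beta$ (Theorem \ref{ThmParametrizationMap}(iv)) may replace the closed-graph property; both are fine. Your (iii), by contrast, is constructive: you exhibit translates $g_nP \to \emptyset$ by letting $(g_n,e)\Gamma$ escape to infinity in $Y$. This is longer than the paper's deduction but more informative, since it shows concretely how escape to infinity in the parameter space is mirrored by escape to $\emptyset$ in the hull; the price is the window bound you flag, namely $\tau(P) \subset h_P^{-1}W_0$ for \emph{all} $P \in \mathcal T$, singular ones included, which Theorem \ref{ThmParametrizationMap}(iii) only gives on $X^{\rm ns}$.

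That flagged step is real but closes easily, and in fact you need neither $\beta$ nor Lemma \ref{LemmahP} for it: any compact set containing $\tau(P)$ suffices for your coset computation. By Proposition \ref{PropHull}, $P^{-1}P \subset P_0^{-1}P_0$ for every $P$ in the hull; so fix $p \in P \subset \Gamma_G$ and use that $\tau$ is a homomorphism on $\Gamma_G$ to get
\[
\tau(P) \;=\; \tau(p)\,\tau(p^{-1}P) \;\subset\; \tau(p)\,\tau\bigl(P_0^{-1}P_0 \cap \Gamma_G\bigr) \;\subset\; \tau(p)\,W_0^{-1}W_0 \;=:\; C,
\]
which is compact. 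Running your argument with $C$ in place of $h_P^{-1}\overline{W_0}$: if $gP \cap K \neq \emptyset$ then, via the injectivity of $\pi_G|_\Gamma$, the coset $(g,e)\Gamma$ meets $K \times C$, i.e.\ the point $(g,e)\Gamma$ lies in the compact image of $K \times C$ in $Y$. Choosing $g_n$ along an exhaustion of $Y$ by compacta so that $(g_n,e)\Gamma \to \infty$ (possible, since the dense orbit $G\cdot y_0$ cannot be contained in any compact subset of the non-compact space $Y$) then gives $g_nP \cap K = \emptyset$ eventually, for every fixed compact $K$, hence $g_nP \to \emptyset$. With this substitution your proof of (iii) is complete and correct.
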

\begin{proof} Observe first that since $\Gamma_H$ is dense in $H$, the space $G\backslash (G\times H)$ is minimal as a $\Gamma$-space, and thus $Y = (G\times H)/\Gamma$ is minimal as a $G$-space by the duality principle. Now assume that $X_0 \subset X^\times$ is a non-empty compact $G$-invariant subset. Since $\beta$ has a closed graph, it maps compact sets to closed sets, and since it is $G$-equivariant, the image of $X_0$ is a closed $G$-invariant subset of $Y$. Minimality of $Y$ then yields $\beta(X_0) = Y$.  In particular $\beta(P_0) \in \beta(X_0)$, and since $P_0 \in X^{\rm ns}$ we deduce $P_0 \in X_0$ and thus $X_0 =X^\times$. This proves that every compact $G$-invariant subset of $X^\times$ is either empty or all of $X^\times$. Since the punctured hull of a closed subset is compact if and only if the subset if relatively dense, this implies (ii) and (iii).
\end{proof}
We now turn to the question of unique ergodicity of the hull. It it is an immediate consequence of the duality principle that $(Y, m_Y)$ is (uniquely) $G$-ergodic, and we want to lift this property to $X^\times$. The natural context to discuss this problem is that of stationary measures. Recall that a probability measure $\mu$ on $G$ is called \emph{admissible} if its support generates $G$ and if it is absolutely continuous with respect to the Haar measure $m_G$ on $G$. Then a probability measure $\nu$ on a measurable $G$-space is called $\mu$-\emph{stationary} provided  $\mu \ast \nu = \nu$. If $G$ is a non-amenable group and $Z$ is a compact $G$-space, then there might not exist $G$-invariant measures, but by a straight-forward application of the Kakutani fixpoint theorem there will always exist a $\mu$-{stationary} probability measure on $Z$ for any admissible probability measure $\mu$ on $G$, and one may then ask whether such a measure is actually unique. 
\begin{theorem}[Existence and uniqueness of stationary and invariant measures on $X^\times$]\label{ThmUE}
There exists a unique $G$-invariant probability measure $\nu$ on $X^\times$. This measure satisfies $\nu(X^{\rm ns}) = 1$ and is also the unique stationary probability measure with respect to any admissible probability measure $\mu$ on $G$. 

Moreover, $\beta: (X^\times, \nu) \to (Y, m_Y)$ is a measurable isomorphism of $G$-spaces and thus induces an isomorphism
\[ \beta^*: L^2(Y, m_Y)  \to L^2(X^\times, \nu)\] of unitary $G$-representations. 
\end{theorem}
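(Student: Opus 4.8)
The plan is to transport the entire problem from the punctured hull $X^\times$ to the homogeneous parameter space $Y$ through the parametrization map $\beta$ of Theorem~\ref{ThmParametrizationMap}, and to solve it there, where homogeneous-space techniques are available. The three facts I would rely on are: (a) $\beta$ is a $G$-equivariant Borel map restricting to a bijection $\beta\colon X^{\rm ns}\to Y^{\rm ns}$ (Theorem~\ref{ThmParametrizationMap}(i)); (b) $Y^{\rm ns}$ has full Haar measure, $m_Y(Y^{\rm ns})=1$, which is where Jordan-measurability of $W_0$ enters; and (c) $(Y,m_Y)$ is uniquely $G$-ergodic, i.e.\ $m_Y$ is its only $G$-invariant probability measure, by the duality principle.

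For existence I would argue as follows. Since $X^\times$ and $Y$ are standard Borel and $\beta|_{X^{\rm ns}}$ is a Borel bijection, the Lusin--Souslin theorem furnishes a Borel inverse $\beta^{-1}\colon Y^{\rm ns}\to X^{\rm ns}$; this inverse is $G$-equivariant, because $\beta$ is and because $Y^{\rm ns}$ is cut out by a condition on the $H$-coordinate, which is untouched by the $G$-action, so $Y^{\rm ns}$ is $G$-invariant. I would then set $\nu:=(\beta^{-1})_\ast m_Y$. This is a $G$-invariant probability measure with $\nu(X^{\rm ns})=1$, and by construction $\beta_\ast\nu=m_Y$ with $\beta$ restricting to a Borel isomorphism between the full-measure sets $X^{\rm ns}$ and $Y^{\rm ns}$. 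Hence $\beta\colon(X^\times,\nu)\to(Y,m_Y)$ is a measurable isomorphism of $G$-spaces, and $\beta^\ast\colon L^2(Y,m_Y)\to L^2(X^\times,\nu)$ is the asserted unitary equivalence of $G$-representations.

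Uniqueness I would reduce to the corresponding statement on $Y$ by a single pushforward argument that treats the invariant and stationary cases uniformly. Let $\nu'$ be any $G$-invariant (resp.\ $\mu$-stationary) probability measure on $X^\times$. Since $\beta$ is $G$-equivariant and Borel, the pushforward $\beta_\ast\nu'$ is $G$-invariant (resp.\ $\mu$-stationary) on $Y$, using $g_\ast\beta_\ast=\beta_\ast g_\ast$. By the uniqueness downstairs this forces $\beta_\ast\nu'=m_Y$. Then $\nu'(X^{\rm ns})=\beta_\ast\nu'(Y^{\rm ns})=m_Y(Y^{\rm ns})=1$, so $\nu'$ is carried by $X^{\rm ns}$, on which $\beta$ is a Borel isomorphism; therefore $\nu'=(\beta^{-1})_\ast(\beta_\ast\nu')=(\beta^{-1})_\ast m_Y=\nu$. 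This settles both uniqueness assertions at once, \emph{provided} the matching uniqueness holds on $Y$.

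The main obstacle is exactly this proviso for stationary measures. Uniqueness of the $G$-invariant measure on $Y$ is the duality principle, but I still have to show that $m_Y$ is the unique $\mu$-stationary probability measure on $Y$ for every admissible $\mu$, and this is where non-amenability of $G$ and possible non-compactness of $Y$ bite. My plan would be to prove that every $\mu$-stationary measure on $Y$ is in fact $G$-invariant, after which unique ergodicity closes the argument: admissibility gives $\mu\ll m_G$, which renders stationary measures $G$-quasi-invariant, and I would then use the commuting left $H$-action on $Y$ --- which preserves $m_Y$ --- to propagate invariance transversally to the $G$-orbits. The delicate point, and the reason the compactness-based fixed-point arguments of the classical theory are unavailable in the non-uniform case, is controlling escape of mass: I expect the hardest step to be ruling out that a stationary measure leaks mass to infinity in the non-compact $Y$, which I would aim to handle via the recurrence afforded by the finiteness of $m_Y$ together with the full-measure locus $Y^{\rm ns}$.
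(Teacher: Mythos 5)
Your overall architecture coincides with the paper's: existence of $\nu$ by pushing $m_Y|_{Y^{\rm ns}}$ through the Borel inverse of $\beta|_{X^{\rm ns}}$ (the paper gets Borelness of the inverse from the closed-graph property rather than Lusin--Souslin, an immaterial difference), and uniqueness by pushing an arbitrary invariant or stationary $\nu'$ forward to $Y$, invoking uniqueness there, using $m_Y(Y^{\rm ns})=1$ (Jordan measurability of $\partial W_0$ plus countability of $\Gamma_H$, the paper's Lemma \ref{NSgeneric}) to conclude $\nu'(X^{\rm ns})=1$, and transporting back through the Borel isomorphism $\beta|_{X^{\rm ns}}$. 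Up to this point your argument is essentially the paper's own proof of Theorem \ref{ThmUE}.

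However, what you label a ``proviso'' --- that $m_Y$ is the unique $\mu$-stationary probability measure on $Y$ for every admissible $\mu$ --- is not an available input; it is the analytic core of the theorem (Lemma \ref{LemmaUE} in the paper), and your plan for it does not amount to a proof. Two concrete problems. First, the escape-of-mass concern is a red herring: uniqueness is a statement about an arbitrary \emph{given} probability measure on $Y$, so no tightness, recurrence or compactness issue arises; in particular ``recurrence afforded by the finiteness of $m_Y$'' cannot help, since finiteness of $m_Y$ says nothing about the unknown stationary measure. Second, and more seriously, ``use the commuting $H$-action to propagate invariance transversally'' is not an argument: an arbitrary $\mu$-stationary measure $\nu$ on $Y$ need not be $H$-quasi-invariant, so one cannot act on it with $H$ directly. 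The paper's rigorous version of this idea is a convolution-smoothing argument: set $\nu_\rho := (\mu\otimes\rho m_H)\ast(\mu\otimes\rho m_H)\ast\nu$ for normalized nonnegative $\rho\in C_c(H)$; admissibility of $\mu$ and $\rho m_H\ll m_H$ make the first convolution absolutely continuous with respect to $m_Y$ and the second gives a \emph{continuous} stationary density $\psi_\rho$; the standard $L^2$-expansion $\int_G\int_Y(\psi_\rho(gy)-\psi_\rho(y))^2\,dm_Y(y)\,d\mu^{\ast k}(g)=0$ shows a continuous stationary density is $G$-invariant, hence constant by minimality of the $G$-action on $Y$ (Proposition \ref{Minimality}(i), which comes from density of $\Gamma_H$ in $H$ and duality); thus $\nu_\rho=m_Y$, and letting $\rho_n m_H\to\delta_e$ yields $\nu=\lim_n\nu_{\rho_n}=m_Y$ by stationarity of $\nu$. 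Note this argument is insensitive to non-compactness of $Y$, which is precisely why the non-uniform case causes no difficulty at this step. Without this (or an equivalent) argument, your proposal establishes only the conditional statement ``if $m_Y$ is the unique stationary measure on $Y$, then the theorem holds,'' which is the easy half of the proof.
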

In the language of \cite{BH}, Theorem \ref{ThmUE} yields the following conclusion when combined with \cite[Prop. 2.13]{BH}:
\begin{corollary}[Regular model sets as approximate lattices] A regular model set is a strong approximate lattice provided its window is symmetric and contains the identity.\qed
\end{corollary}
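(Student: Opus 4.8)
The plan is to read the corollary off from the definition of a strong approximate lattice in \cite{BH}, combining the approximate-subgroup structure of $P_0$ with the invariant measure on its punctured hull supplied by Theorem \ref{ThmUE}. Recall that a \emph{strong approximate lattice} in $G$ is a uniformly discrete approximate subgroup $\Lambda \subset G$ --- that is, a uniformly discrete symmetric set containing $e$ with $\Lambda \Lambda \subset F\Lambda$ for some finite $F \subset G$ --- whose punctured hull carries a $G$-invariant probability measure. So it suffices to verify these two ingredients for $P_0 = \tau^{-1}(W_0)$.

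First I would check the approximate-subgroup structure, which is essentially the content of \cite[Prop. 2.13]{BH}. The key point is that the $*$-map $\tau \colon \Gamma_G \to H$ is a group homomorphism, so the hypotheses on $W_0$ pass directly to $P_0$: symmetry of $W_0$ gives $P_0^{-1} = \tau^{-1}(W_0^{-1}) = \tau^{-1}(W_0) = P_0$, and $e_H \in W_0$ gives $e_G \in P_0$ since $\tau(e_G) = e_H$. For the relation $P_0 P_0 \subset F P_0$ I would use that $\tau(P_0 P_0) \subset W_0 W_0$, so $P_0 P_0 \subset \tau^{-1}(W_0 W_0)$; as $W_0$ has dense (in particular non-empty) interior while $\Gamma_H$ is dense in $H$, the compact set $W_0 W_0$ is covered by finitely many translates $\gamma_i W_0$ with $\gamma_i \in \Gamma_H$, and lifting the $\gamma_i$ to $f_i \in \Gamma_G$ produces a finite set $F = \{f_i\}$ with $P_0 P_0 \subset F P_0$. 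Uniform discreteness of $P_0$ is immediate from its finite local complexity, cf.\@ \eqref{Discreteness}.

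Next I would invoke Theorem \ref{ThmUE}, which provides the (unique) $G$-invariant probability measure $\nu$ on the punctured hull $X^\times_{P_0}$; by construction this measure is concentrated on the non-empty closed subsets of $G$. Matching the two ingredients against the definition recalled above then shows that $P_0$ is a strong approximate lattice, completing the argument.

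The main point to stress is that there is no real obstacle left at this stage: the entire difficulty has been absorbed into Theorem \ref{ThmUE}, whose proof produces the invariant probability measure even when $X^\times_{P_0}$ is non-compact and $G$ is non-amenable, i.e.\@ precisely in the non-uniform case. The only subtlety is conceptual: the notion of strong approximate lattice does \emph{not} require relative density of $P_0$ (which fails when $P_0$ is non-uniform), the role played by compactness in the uniform case being taken over by the existence of the invariant measure.
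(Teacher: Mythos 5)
Your proposal is correct and follows essentially the same route as the paper, whose entire proof is the remark that Theorem \ref{ThmUE} combined with \cite[Prop. 2.13]{BH} gives the statement; you simply unfold the content of \cite[Prop. 2.13]{BH} (symmetry and $e \in P_0$ via the homomorphism property of $\tau$, the covering argument for $P_0P_0 \subset FP_0$, uniform discreteness from FLC) and then invoke Theorem \ref{ThmUE} for the invariant measure, exactly as the paper intends.
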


The proof of Theorem \ref{ThmUE} is a consequence of Theorem \ref{ThmParametrizationMap} and the following two lemmas.
\begin{lemma}\label{NSgeneric} The subset $Y^{\rm ns} \subset Y$ is conull with respect to Haar measure on $Y$.
\end{lemma}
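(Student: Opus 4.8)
The plan is to describe the complement $Y^{\rm sing} := Y \setminus Y^{\rm ns}$ explicitly and to show it is $m_Y$-null. Write $\Gamma_H = \pi_H(\Gamma)$ and set
\[
N := \partial W_0 \cdot \Gamma_H = \bigcup_{\gamma \in \Gamma_H} (\partial W_0)\,\gamma \subset H.
\]
First I would unwind the definition of non-singularity: for $(g,h)\Gamma \in Y$ the window $h^{-1}W_0$ is $\Gamma$-regular precisely when $\partial(h^{-1}W_0) \cap \Gamma_H = \emptyset$, and since $\partial(h^{-1}W_0) = h^{-1}\partial W_0$, this fails exactly when $h^{-1}w = \gamma$ for some $w \in \partial W_0$ and $\gamma \in \Gamma_H$, i.e.\ exactly when $h \in N$. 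Thus singularity depends only on the coordinate $h$, and since $N$ is right-$\Gamma_H$-invariant (i.e.\ $N\Gamma_H = N$) the function $(g,h) \mapsto \chi_N(h)$ is right-$\Gamma$-invariant and descends to $\chi_{Y^{\rm sing}}$ on $Y$; in particular $Y^{\rm sing}$ is Borel because $N$ is an $F_\sigma$ set.

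The second step is to show $m_H(N) = 0$, using the standing hypotheses. Jordan-measurability of $W_0$ gives $m_H(\partial W_0) = 0$; the lattice $\Gamma$ is countable, so $\Gamma_H$ is countable; and right-translation on $H$ sends $m_H$-null sets to $m_H$-null sets (the modular function is positive, so this holds even without unimodularity). Consequently $N$ is a countable union of $m_H$-null sets, whence $m_H(N) = 0$.

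Finally I would transfer this to $Y$. Since $\Gamma$ is a lattice in the unimodular group $G \times H$ (unimodular because it admits a lattice), fix a Borel fundamental domain $\mathcal F \subset G \times H$ for the right $\Gamma$-action, so that the quotient map identifies $(Y, m_Y)$ with $(\mathcal F, c^{-1}(m_G \times m_H)|_{\mathcal F})$ where $c := (m_G \times m_H)(\mathcal F)$. Under this identification $Y^{\rm sing}$ corresponds to $\mathcal F \cap (G \times N)$, so by Tonelli
\[
c \cdot m_Y(Y^{\rm sing}) = \int_H \chi_N(h)\, m_G\big(\{g : (g,h) \in \mathcal F\}\big)\, dm_H(h) = \int_N \phi \, dm_H,
\]
where $\phi(h) := m_G(\{g : (g,h) \in \mathcal F\}) \ge 0$ is measurable. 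As $\phi$ is non-negative and $N$ is $m_H$-null, the right-hand integral vanishes, giving $m_Y(Y^{\rm sing}) = 0$ and hence $m_Y(Y^{\rm ns}) = 1$.

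The one point requiring care — the main (though mild) obstacle — is that $G$ may be non-compact, so one cannot conclude that $G \times N$ is null by the naive product bound $m_G(G) \cdot m_H(N) = \infty \cdot 0$. Restricting to the finite-measure fundamental domain $\mathcal F$ and using that the integral of a non-negative measurable function over the $m_H$-null set $N$ vanishes is exactly what circumvents this; the hypotheses that actually get consumed are the Jordan-measurability of $W_0$ (for $m_H(\partial W_0)=0$) and that $\Gamma$ is a lattice (for countability of $\Gamma_H$ and unimodularity of $G \times H$).
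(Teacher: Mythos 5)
Your proof is correct and follows essentially the same route as the paper: identify the singular locus as $\{(g,h)\Gamma : h \in \partial W_0\,\Gamma_H\}$, then use Jordan-measurability of $W_0$ and countability of $\Gamma_H$ to see that $\partial W_0\,\Gamma_H$ is $m_H$-null. The only difference is that the paper leaves the transfer from $m_H$-nullness to $m_Y$-nullness implicit, while you spell it out via a fundamental domain and Tonelli (which, incidentally, also shows the ``naive'' bound $m_{G\times H}(G\times N)=0$ does hold for $\sigma$-finite product measures); this is a harmless elaboration, not a different approach.
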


\begin{lemma}\label{LemmaUE} The Haar measure $m_Y$ is the unique stationary probability measure on $Y$ with respect to any admissible probability measure $\mu$ on $G$.
\end{lemma}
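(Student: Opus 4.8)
The plan is to show that, for every admissible $\mu$, the $(G\times H)$-invariant probability measure $m_Y$ on $Y=(G\times H)/\Gamma$ is the \emph{only} $\mu$-stationary probability measure, by combining the two structural features at our disposal: the absolute continuity $\mu\ll m_G$ built into admissibility, and the commuting action of $H$ on $Y$ by left translation of the second coordinate, whose orbits are dense because $\Gamma_H=\pi_H(\Gamma)$ is dense in $H$. Throughout I write $L:=G\times H$, so that $L$ acts transitively on $Y$ with invariant probability measure $m_Y$, and for every $y\in Y$ the orbit map $o_y\colon L\to Y,\ \ell\mapsto \ell\cdot y$ is a local homeomorphism (as $\Gamma$ is discrete). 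Existence is free, since $m_Y$ is $G$-invariant and hence $\mu$-stationary; the whole content is uniqueness.

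I first record that stationarity is preserved under the commuting $H$-action: if $\nu$ is $\mu$-stationary then, since $g_*(h_*\nu)=h_*(g_*\nu)$, one has $\mu*(h_*\nu)=h_*(\mu*\nu)=h_*\nu$ for every $h\in H$. Given a compactly supported continuous probability density $\rho$ on $H$ (with respect to $m_H$), I smooth $\nu$ in the transverse direction by setting
\[
\nu_\rho:=\int_H h_*\nu\,\rho(h)\,dm_H(h),
\]
which is again a $\mu$-stationary probability measure. The key point is that $\nu_\rho\ll m_Y$. To see this I apply stationarity once more to write $\nu_\rho=\mu*\nu_\rho$ and expand, using $d\mu=\psi\,dm_G$ with $\psi\ge 0$ and $g_*h_*=(g,h)_*$:
\[
\nu_\rho=\int_G\!\int_H (g,h)_*\nu\;\psi(g)\rho(h)\,dm_H(h)\,dm_G(g)=\int_L \ell_*\nu\,d\Theta(\ell),\qquad d\Theta:=(\psi\otimes\rho)\,dm_L,
\]
so that $\Theta\ll m_L$. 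For a Borel set $A\subset Y$ with $m_Y(A)=0$, Fubini gives $\nu_\rho(A)=\int_Y \big((o_y)_*\Theta\big)(A)\,d\nu(y)$; but $(o_y)_*\Theta\ll m_Y$ for every $y$, because $o_y$ factors as a right translation on $L$ followed by the local homeomorphism $L\to L/\Gamma$, both of which preserve the Haar measure class. Hence $\nu_\rho(A)=0$.

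With absolute continuity in hand the argument closes quickly. Writing $\nu_\rho=f\,m_Y$ with $f\in L^1(Y,m_Y)$ nonnegative and using $G$-invariance of $m_Y$, the identity $\mu*\nu_\rho=\nu_\rho$ becomes $P_\mu f=f$, where $P_\mu\xi(y):=\int_G \xi(g^{-1}y)\,d\mu(g)$. Since $m_Y$ is $G$-invariant, $P_\mu$ is an $L^1$-contraction fixing the constants and preserving the integral; a standard convexity argument, using that $\supp\mu$ generates $G$, shows that any fixed point of $P_\mu$ is $G$-invariant, hence constant by $G$-ergodicity of $(Y,m_Y)$—which itself follows from the duality principle together with the density of $\Gamma_H$, exactly as in the proof of Proposition \ref{Minimality}. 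Thus $\nu_\rho=m_Y$ for every admissible density $\rho$.

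Finally I remove the smoothing. Choosing an approximate identity $(\rho_n)$ on $H$, supported in shrinking neighbourhoods of $e$, and testing against $\varphi\in C_c(Y)$ gives $\nu_{\rho_n}(\varphi)=\int_H \nu(\varphi\circ h)\,\rho_n(h)\,dm_H(h)\to\nu(\varphi)$ by continuity of $h\mapsto \nu(\varphi\circ h)$; since $\nu_{\rho_n}=m_Y$ for every $n$, this forces $\nu=m_Y$. I expect the main obstacle to be precisely the absolute continuity of $\nu_\rho$: a single application of stationarity only redistributes mass within $G$-orbits, and these are $m_Y$-null, so it is essential to combine the $G$-direction smoothing coming from $\mu\ll m_G$ with the transverse $H$-direction smoothing coming from $\rho$, and to exploit that the joint action of $L=G\times H$ is transitive with local-homeomorphism orbit maps. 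Once this is established the remaining steps are soft.
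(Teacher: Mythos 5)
Your proof is correct, and its skeleton matches the paper's: smooth the stationary measure using the commuting $H$-action together with the absolute continuity of $\mu$, deduce absolute continuity of the smoothed measure with respect to $m_Y$, show its density is constant, and then remove the smoothing with an approximate identity $\rho_n m_H \to \delta_e$. The differences are in the middle step. The paper convolves with $\mu \otimes \rho m_H$ \emph{twice} so as to obtain a \emph{continuous} density $\psi_\rho$, proves $G$-invariance of $\psi_\rho$ by the $I_k$-computation, and concludes constancy from topological minimality of $Y$ (Proposition \ref{Minimality}(i)). You convolve in effect only once --- your $\nu_\rho=(\rho m_H)\ast\nu$ becomes $(\mu\otimes\rho m_H)\ast\nu$ after one application of stationarity, and your Fubini/orbit-map argument for $\nu_\rho \ll m_Y$ is sound --- so you only have an $L^1$ density; you compensate by invoking measure-theoretic ergodicity of $(Y,m_Y)$ (which the paper also asserts via the duality principle) plus the standard fact that an $L^1$ fixed point of $P_\mu$ is $G$-invariant when $\supp\mu$ generates $G$. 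That fact is the one point you leave compressed, and you should spell it out: truncate $f_c=\min(f,c)$, check $P_\mu f_c=f_c$ from positivity and preservation of the integral, run the $L^2$ equality-case computation (identical to the paper's $I_k$ with $k=1$), and use that $\{g \in G : f_c\circ g^{-1}=f_c \text{ a.e.}\}$ is a closed subgroup of $G$. What your route buys is that it never needs continuity of the density --- a point where the paper is actually somewhat delicate, since an admissible $\mu$ has merely an $L^1$ density and convolving two $L^1$ densities need not produce anything continuous in the $G$-variable; what the paper's route buys is that the endgame follows from minimality alone, with no appeal to an $L^1$-harmonic-function lemma.
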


\begin{proof}[Proof of Theorem \ref{ThmUE}] Let $\beta: X^\times \to Y$ be the parametrization map constructed in Theorem \ref{ThmParametrizationMap}. Since $\beta$ has a closed graph, so does the restriction $\beta|_{X^{\rm ns}}: X^{\rm ns} \to Y^{\rm ns}$ and hence also its inverse $(\beta|_{X^{\rm ns}})^{-1}: Y^{\rm ns} \to X^{\rm ns}$. In particular, $(\beta|_{X^{\rm ns}})^{-1}$ is Borel and we may define a $G$-invariant probability measure on $X^\times$ by
\[
\nu := (\beta|_{X^{\rm ns}})^{-1}_*\; m_Y|_{Y^{\rm ns}}.
\]
By definition, we have for every Borel subset $A \subset X^\times$,
\[
\nu(A) = m_Y(\beta(A \cap X^{\rm ns})) = m_Y(\beta(A) \cap Y^{\rm ns}) = m_Y(\beta(A)).
\]
Now let $\mu$ be an admissible probability measure on $G$ and $\nu'$ a $\mu$-stationary probability measure on $X^\times$. Then $\beta_*\nu'$ is a $\mu$-stationary measure on $Y$ and thus $\beta_*\nu' = m_Y$ by Lemma \ref{LemmaUE}. Since $\beta_*\nu'(Y^{\rm ns}) = 1$ by Lemma \ref{NSgeneric} we deduce that $\nu'(X^{\rm ns}) = 1$, i.e. $\nu'$ is a probability measure on $X^{{\rm ns}}$. Now  $\mu$-stationary measures on $X^{\rm ns}$ correspond bijectively via $\beta$ to $\mu$-stationary measures on $Y^{\rm ns}$ via the  $G$-equivariant Borel isomorphism $\beta|_{X^{\rm ns}}$. We conclude that $\nu' = \nu$, and the theorem follows.
\end{proof}

\begin{proof}[Proof of Lemma \ref{NSgeneric}] An element $(g,h)\cdot \Gamma \in Y$ is singular if and only if $h^{-1}W_0$ is not $\Gamma$-regular. This amounts to $h^{-1}\partial W_0 \cap \Gamma_H \neq \emptyset$, i.e. $h \in \partial W_0 \Gamma_H$. Now since $W_0$ is Jordan-measurable we have $m_H(\partial W_0) = 0$ and thus $\partial W_0 \Gamma_H$ is a nullset by countability of $\Gamma_H$.
\end{proof}

\begin{proof}[Proof of Lemma \ref{LemmaUE}] Fix an admissible probability measure $\mu$ on $G$ and let $\nu$ be an arbitrary $\mu$-stationary probability measure on $Y$. We are going to show that $\nu = m_Y$.

For every non-negative function $\rho \in C_c(H)$ normalized to $\int \rho dm_H = 1$ we define a probability measure $\nu_\rho$ on $Y$ by $\nu_\rho := (\mu \otimes \rho m_H) \ast (\mu \otimes \rho m_H) \ast \nu$. Using that the $G$- and $H$-action commute, we see that $\nu_\rho$ is $\mu$-stationary. Since $\mu$ and $\rho m_H$ are respectively absolutely continuous with respect to $m_G$ and $m_H$ we deduce that $ (\mu \otimes \rho m_H) \ast \nu$ is absolutely continuous with respect to $m_Y$. The second convolution then has a smoothing effect, and we deduce that $\nu_\rho$ has a continuous density $\psi_\rho \in C(Y)$ with respect to Haar measure. Since $m_Y$ is $G$-invariant and $\nu_\rho$ is $\mu$-stationary, the density $\psi_\rho$ is $\mu$-stationary as well. By a standard argument, this implies that $\psi_\rho$ is actually $G$-invariant. Indeed, since $\psi_\rho$ is continuous and the support of $\mu$ generates $G$ as a semigroup, it suffices to show that for all $k \in \mathbb N$,
\[
I_k := \int_G \int_Y (\psi_\rho(gy)-\psi_\rho(y))^2dm_Y(y)d\mu^{*k}(g) =0,
\]
Using stationarity of $m_Y$ and $\psi_\rho$ and expanding the square we obtain
\[
I_k = 2\left(\int_Y \psi_\rho^2 dm_Y - \int_Y \psi_\rho(y) \left[\int \psi_\rho(gy)d\mu^{*k}(g) \right]dm_Y(y) \right)=0\]
for all $k \in \mathbb N$. This shows that $\psi_\rho$ is indeed $G$-invariant, hence constant by Proposition \ref{Minimality}.(i). We deduce that $\psi_\rho = 1$ and $\nu_\rho = m_H$ for every $\rho$ as above.

Now let $\rho_n$ be a sequence of normalized positive functions in $C_c( H)$ such that $\rho_nm_H$ converges to $\delta_e$ in the weak-$*$ topology. Then the previous argument yields $\nu_{\rho_n} = m_Y$ for every $n$ and thus
\[
\nu = \lim_{n \to \infty} \nu_{\rho_n} = m_Y. \qedhere
\]
\end{proof}
\subsection{The local topology and the proof of Theorem \ref{ThmParametrizationMap}}
Our proof of Theorem \ref{ThmParametrizationMap} uses a characterization of the Chabauty--Fell topology on the orbit closure of a model set which is established in Appendix \ref{AppendixLocalTopology}, and which we summarize here for the convenience of the reader.

By definition, the \emph{(left-) local topology} is the unique topology on $\mathcal C(G)$ such that for every $P \in \mathcal C(G)$ a neighbourhood basis of $P$ is given by the sets
\[U_{K,V}(P) =  \{Q \in \mathcal C(G) \mid \exists\, t \in V:\; tQ\cap K = P \cap K\},
\]
where $K$ runs over all compact subsets of $G$ and $V$ runs over the identity neighbourhood basis of $G$.

It turns out that the local topology is finer than the Chabauty--Fell topology on $\mathcal C(G)$. However, if $P \in \mathcal C(G)$ is of finite local complexity (e.g. a model set), then by Corollary \ref{TopologiesCoincide} the orbit closures of $P$ in the local topology and the Chabauty--Fell topology coincide. It will thus suffice to establish Theorem \ref{ThmParametrizationMap} for the orbit closure of $P_0 = P_0(G, H, \Gamma, W_0)$ in $\mathcal C(G)$ with respect to the local topology, which is computationally more convenient. For the rest of this section we will exclusively work with the local topology on $\mathcal C(G)$.

Towards the proof of Theorem \ref{ThmParametrizationMap} we first note that the assumptions on $G$ imply that $G$ is $\sigma$-compact. We may thus fix an exhaustion $K_1 \subset K_2 \subset \dots \subset G$ of $G$ by compact subsets. We also fix a sequence of symmetric pre-compact open identity neighbourhoods $V_1 \supset V_2 \supset \dots$ in $G$ such that $\bigcap V_n = \{e\}$. The proof of Theorem \ref{ThmParametrizationMap} will be based on the following two lemmas:
\begin{lemma}\label{LemmaWindowMagic}\label{LemmaRegularityConvergence} 
Let $W_0 \subset H$ be a window, let $h, h' \in H$ and let $(h_n), (h_n')$ be sequences in $H$ converging to $h$ and $h'$ respectively.
\begin{enumerate}[(i)]
\item If $h \neq h'$, then there exists  a non-empty, open subset $U \subset H$ such that for all sufficiently large $n \in \mathbb N$,
\[
U \subset h_nW_0 \setminus h_n'W_0.
\]
\item  If the windows $h_nW_0$ and $hW_0$ are $\Gamma$-regular, then for every $K \in \mathcal K(G)$ there exists $n_0 \in \mathbb N$ such that for all $n \geq n_0$ we have
\[
(K \times h_nW_0) \cap \Gamma = (K \times hW_0) \cap \Gamma.
\]
\end{enumerate}
\end{lemma}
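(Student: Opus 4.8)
The plan is to treat the two parts separately, drawing on the standing assumption that the window $W_0$ is regular; in particular $W_0$ is compact (hence closed), has dense interior (so $W_0 = \overline{\operatorname{int} W_0}$), and is aperiodic in the sense that $\Stab_H(W_0) = \{e\}$. For part (i) my strategy is to produce a single point lying robustly inside $hW_0$ and robustly outside $h'W_0$, and then to thicken it to an open set. The heart of the matter is the geometric claim
\[
h \cdot \operatorname{int} W_0 \not\subset h'W_0,
\]
which is exactly where $h \neq h'$ and aperiodicity are combined. I would prove it by contradiction: assuming $h \cdot \operatorname{int} W_0 \subset h'W_0$ and writing $g := h^{-1}h' \neq e$ gives $\operatorname{int} W_0 \subset gW_0$, and taking closures (using dense interior and that $gW_0$ is closed) yields $W_0 \subset gW_0$. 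Since left Haar measure $m_H$ is translation-invariant and $W_0$ is compact, $m_H(gW_0 \setminus W_0) = 0$, so the open set $\operatorname{int}(gW_0) \setminus W_0$ is a Haar-nullset and hence empty; as left translation is a homeomorphism this forces $gW_0 = \overline{\operatorname{int}(gW_0)} \subset W_0$, whence $gW_0 = W_0$ with $g \neq e$, contradicting aperiodicity.

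Granting the claim, I pick $x_0 \in h \cdot \operatorname{int} W_0 \setminus h'W_0$ and a relatively compact open neighbourhood $U \ni x_0$ with $\overline U \subset (h \cdot \operatorname{int} W_0) \cap (H \setminus h'W_0)$, which exists because $H$ is locally compact and $\operatorname{int} W_0$, $H \setminus h'W_0$ are open. Robustness under the perturbations then follows from the standard fact that a compact subset $C$ of an open set $O$ in a topological group admits an identity neighbourhood $N$ with $NC \subset O$: applying this to $h^{-1}\overline U \subset \operatorname{int} W_0$ and to $(h')^{-1}\overline U \subset H \setminus W_0$, and using $h_n^{-1}h \to e$ and $(h_n')^{-1}h' \to e$, gives for all large $n$ that $h_n^{-1}\overline U \subset W_0$ and $(h_n')^{-1}\overline U \cap W_0 = \emptyset$, that is, $U \subset h_nW_0 \setminus h_n'W_0$.

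For part (ii) I would first confine everything to a fixed compact set: since $h_n \to h$, the set $C := \{h\} \cup \{h_n : n \in \mathbb N\}$ is compact, so $L := CW_0 \subset H$ is compact and $F := \Gamma \cap (K \times L)$ is finite by discreteness of $\Gamma$; moreover both $(K \times h_nW_0)\cap\Gamma$ and $(K \times hW_0)\cap\Gamma$ lie in $F$ for every $n$, since $h_nW_0, hW_0 \subset L$. It then suffices to decide, for each of the finitely many $\gamma = (\gamma_G,\gamma_H) \in F$, whether $\gamma_H \in h_nW_0$. Here $\Gamma$-regularity of $hW_0$ is decisive: since $\gamma_H \in \Gamma_H$ and $\partial(hW_0) \cap \Gamma_H = \emptyset$, each $\gamma_H$ lies in one of the two \emph{open} sets $\operatorname{int}(hW_0)$ or $H \setminus hW_0$. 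In the first case $h^{-1}\gamma_H \in \operatorname{int} W_0$ forces $h_n^{-1}\gamma_H \in W_0$, hence $\gamma_H \in h_nW_0$, for large $n$; in the second case $h^{-1}\gamma_H \notin W_0$ forces $\gamma_H \notin h_nW_0$ for large $n$. Either way membership of $\gamma_H$ in $h_nW_0$ eventually agrees with membership in $hW_0$, and taking the largest of the finitely many thresholds over $\gamma \in F$ yields the required $n_0$.

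The step I expect to be the main obstacle is the geometric claim in part (i): upgrading the almost-inclusion $W_0 \subset gW_0$ of translated windows to the genuine equality $gW_0 = W_0$ that contradicts aperiodicity. This is the only place where measure theory (translation-invariance of $m_H$, finiteness of $m_H(W_0)$, and positivity of $m_H$ on non-empty open sets) must interact with the topological regularity of $W_0$; everything else reduces to routine continuity and compactness arguments in $H$ together with the discreteness of $\Gamma$. It is also worth noting that part (ii) seems to require only the regularity of the limit window $hW_0$, the regularity of the $h_nW_0$ playing no role in the argument above.
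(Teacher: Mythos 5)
Your proof is correct, and it takes a genuinely different route from the paper's in both parts. In (i), the paper works with the shrunken windows $W_{0,n} = \{w \in W_0^o : V_n^{-1}w \subseteq W_0^o\}$ and, using regularity of Haar measure, locates a basic open set of the form $hV_n^{-1}h^{-1}V_nw$ inside the positive-measure open set $hW_0^o \setminus h'\overline{V_m}W_0$, so that stability under the perturbations $h_n \to h$, $h_n' \to h'$ is built into the construction; you instead isolate a single point of $hW_0^o \setminus h'W_0$, thicken it to a precompact open $U$, and invoke the compact-inside-open tube argument twice. Both routes ultimately rest on positivity of Haar measure on non-empty open sets. A genuine added value of your write-up is that the paper's opening step, ``since $W_0$ has trivial stabilizer we have $hW_0 \setminus h'W_0 \neq \emptyset$,'' is asserted without justification, and it is not a formal consequence of $\Stab_H(W_0) = \{e\}$ alone (a proper inclusion $hW_0 \subset h'W_0$ is not immediately excluded by aperiodicity); your argument upgrading $W_0 \subset gW_0$ to $gW_0 = W_0$ via translation-invariance of $m_H$, finiteness of $m_H(W_0)$, and dense interior is exactly the missing justification. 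In (ii), the paper argues by contradiction through subsequences: the sets $A_n = \bigl(K \times (hW_0 \setminus h_nW_0^o)\bigr) \cap \Gamma$ and $B_n = \bigl(K \times (h_nW_0 \setminus hW_0^o)\bigr) \cap \Gamma$ range over subsets of a fixed finite set, are constant along suitable subsequences, and their intersections sit inside $K \times h\partial W_0$, which misses $\Gamma$ by regularity of $hW_0$. Your argument reaches the same conclusion more directly: confine everything to the finite set $F = \Gamma \cap (K \times CW_0)$ and run the open dichotomy (interior of $hW_0$ versus complement of $hW_0$) point by point, taking the maximum of finitely many thresholds; no subsequence extraction is needed. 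Finally, your closing observation is accurate and applies equally to the paper's own proof: only $\Gamma$-regularity of the limit window $hW_0$ is ever used, since the decomposition of $(K \times hW_0) \cap \Gamma$ and $(K \times h_nW_0) \cap \Gamma$ into the pieces above holds for purely set-theoretic reasons, regardless of whether the $h_nW_0$ are regular.
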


\begin{lemma}\label{LemmahP} For every $P \in \mathcal T$ there exists $h_P \in H$ with the following property. For every sequence $(g_n)$ in $G$ with $g_n P_0 \to P$ there exists a subsequence $(g_{n_i})$ such that
\begin{enumerate}[(i)]
\item $g_{n_i} = s_i \gamma_i$ for some $s_i \in V_i$, $\gamma_i \in \Gamma_G$;
\item $s_i \to e$, $\tau(\gamma_i) \to h_P^{-1}$;
\item For every $i,j \in \mathbb N$ with $ j \geq i$ we have
\[
\gamma_j P_0 \cap K_i = P \cap K_i.
\]
\end{enumerate}
\end{lemma}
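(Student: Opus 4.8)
The plan is to read off the required subsequence directly from the definition of the local topology, extract the elements $\gamma_i \in \Gamma_G$ from the approximating translations, and then define $h_P$ as a limit of $\tau(\gamma_i)^{-1}$; the bulk of the work will go into showing that this limit exists and depends only on $P$, not on the sequence $(g_n)$.

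First I would use that the sets $U_{K_i,V_i}(P)$ form a neighbourhood basis of $P$ in the local topology (with $(K_i)$ the fixed exhaustion and $(V_i)$ the fixed symmetric basis at $e$, which we may assume to be a neighbourhood basis). Since $g_n P_0 \to P$, for each $i$ the relation $g_n P_0 \in U_{K_i,V_i}(P)$ holds for all large $n$, so I may pick a strictly increasing sequence $(n_i)$ and elements $t_i \in V_i$ with
\[
t_i g_{n_i} P_0 \cap K_i = P \cap K_i.
\]
Fixing any $p \in P \subset \Gamma_G$, we have $p \in K_i$ for all large $i$, hence $p \in t_i g_{n_i}P_0$, so $p = t_i g_{n_i} q_i$ with $q_i \in P_0 \subset \Gamma_G$. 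Thus $t_i g_{n_i} = p q_i^{-1} \in \Gamma_G$, and setting $\gamma_i := t_i g_{n_i} \in \Gamma_G$, $s_i := t_i^{-1} \in V_i$ (using symmetry of $V_i$) gives $g_{n_i} = s_i \gamma_i$ with $s_i \to e$, which is (i). By construction $\gamma_i P_0 \cap K_i = P \cap K_i$, and intersecting with $K_i \subseteq K_j$ gives $\gamma_j P_0 \cap K_i = P \cap K_i$ for $j \geq i$, which is (iii).

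It remains to produce $h_P$ and prove $\tau(\gamma_i) \to h_P^{-1}$. The key algebraic point is that $\tau\colon \Gamma_G \to H$ is a group homomorphism, so that
\[
\gamma_i P_0 = \gamma_i\,\tau^{-1}(W_0) = \tau^{-1}\big(\tau(\gamma_i)W_0\big);
\]
that is, $\gamma_i P_0$ is again a model-set pattern, now with window $\tau(\gamma_i)W_0$. Applying $\gamma_i P_0 \cap K_i = P \cap K_i$ to the fixed point $p$ gives $\tau(p) \in \tau(\gamma_i)W_0$, i.e. $\tau(\gamma_i)^{-1} \in W_0\,\tau(p)^{-1}$, a fixed compact subset of $H$. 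Hence $(\tau(\gamma_i)^{-1})$ is relatively compact and admits convergent subsequences.

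The main obstacle is to rule out two distinct subsequential limits, and this is exactly where Lemma \ref{LemmaWindowMagic}(i) enters. Suppose $\tau(\gamma_i)^{-1} \to a$ along one subsequence and $\to b$ along another, with $a \neq b$; then $\tau(\gamma_i) \to a^{-1}$ resp. $b^{-1}$, and $a^{-1} \neq b^{-1}$. Applying Lemma \ref{LemmaWindowMagic}(i) to these two sequences of window translates produces a non-empty open $U \subset H$ which, along the paired subsequences, lies in $\tau(\gamma_i)W_0$ but not in $\tau(\gamma_{i'})W_0$; by density of $\Gamma_H = \tau(\Gamma_G)$ in $H$ I may choose $\gamma \in \Gamma_G$ with $\tau(\gamma) \in U$. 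Then $\gamma \in \gamma_i P_0$ along the first subsequence and $\gamma \notin \gamma_{i'}P_0$ along the second; since $\gamma$ eventually lies in the exhausting sets, the identities $\gamma_i P_0 \cap K_i = P \cap K_i$ force $\gamma \in P$ and $\gamma \notin P$ simultaneously, a contradiction. Therefore $(\tau(\gamma_i)^{-1})$ converges, and I set $h_P$ to be its limit, so that $\tau(\gamma_i) \to h_P^{-1}$, establishing (ii). Finally, since this uniqueness argument used only the relation $\gamma_i P_0 \cap K_i = P \cap K_i$ and never the particular parent sequence $(g_n)$, the same limit $h_P$ arises from every sequence converging to $P$; this is precisely what is needed for $h_P$ to be a well-defined invariant of $P$ before quantifying over all $(g_n)$.
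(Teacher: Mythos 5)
Your proof is correct and follows the paper's argument in all essential respects: the same extraction of $t_i \in V_i$ and $\gamma_i := t_i g_{n_i}$ from the local-topology neighbourhoods $U_{K_i,V_i}(P)$, the same use of the transversal property $P \subset \Gamma_G$ (together with $P_0 \subset \Gamma_G$) to conclude $\gamma_i \in \Gamma_G$, and the same appeal to Lemma \ref{LemmaWindowMagic}(i) plus density of $\Gamma_H$ to rule out two distinct subsequential limits --- the paper phrases that last step by applying $\pi_G$ to $(K_j \times \tau(\gamma_i)W_0)\cap\Gamma$, while you pick a single $\gamma \in \Gamma_G$ with $\tau(\gamma) \in U$, but these are the same computation. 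The one place you genuinely diverge is the pre-compactness of $\{\tau(\gamma_i)\}$: the paper argues by contradiction, using that a non-pre-compact sequence of translates $\tau(\gamma_j)W_0$ eventually yields disjoint windows and hence disjoint sets $\gamma_i P_0 \cap K_i$, contradicting (iii); you instead observe that a fixed $p \in P$ lying in $\gamma_i P_0$ forces $\tau(\gamma_i)^{-1} \in W_0\,\tau(p)^{-1}$, a fixed compact set. Your version is shorter and more direct, avoiding the paper's auxiliary disjointness claim, and it exploits the same identity $\gamma_i P_0 = \tau^{-1}(\tau(\gamma_i)W_0)$ that both proofs need anyway. One minor bookkeeping point: your ``$\gamma_i \in \Gamma_G$ for all large $i$'' requires discarding an initial segment and re-indexing to get assertion (i) for \emph{all} $i$; this is harmless because the $V_i$ decrease and the $K_i$ increase, and the paper makes the same kind of adjustment when it passes to a further subsequence to ensure $\gamma_i P_0 \cap K_i \neq \emptyset$.
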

Let us first explain how these lemmas imply the theorem.
\begin{proof}[Proof of Theorem \ref{ThmParametrizationMap}] Consider the orbit closure $Z := \overline{G.(P_0, y_0)} \subset  X^\times \times Y$ and note that $Z$ projects onto both $X^\times$ and $Y$. We claim that for every 
$P \in X^\times$ the section
\begin{equation}
Z[P] := \{y \in Y\mid (P, y) \in Z\}
\end{equation}
is a singleton. Assuming the claim for the moment, we deduce that  $Z = {\rm gr}(\beta)$ for some map $\beta: X^\times \to Y$, which is $G$-equivariant by $G$-invariance of $Z$ and satisfies $\beta(P_0) = y_0$ by construction. Since $\beta$ has a closed graph, it is automatically Borel. Conversely, if $\beta': X^\times \to Y$ is any $G$-equivariant Borel map with closed graph satisfying $\beta'(P_0) = y_0$, then ${\rm gr}(\beta') \supset Z = {\rm gr}(\beta)$ and thus $\beta' = \beta$. Thus our claim implies both existence and uniqueness of $\beta$. Moreover, in the cocompact case both $X^\times$ and $Y$ are compact, hence $\beta$ is automatically continuous by the closed graph theorem.

To establish the claim, consider first $P \in \mathcal T$ and let $y \in Z[P]$. By definition this means that there exist $g_n \in G$ such that
\[
g_n.(P_0, y_0) \to (P,y).
\]
By Lemma \ref{LemmahP} we can find a subsequence $(g_{n_i})$ of $(g_n)$ and $s_i \in V_i$, $\gamma_i \in \Gamma_G$ such that
\[
y = \lim_{i \to \infty} g_{n_i}.y_0 = \lim_{i \to \infty} (s_i\gamma_i, e)\Gamma = \lim_{i \to \infty} (s_i, \tau(\gamma_i)^{-1})\Gamma = (e, h_P)\Gamma.
\]
Thus $Z[P] = \{ (e, h_P)\Gamma\}$ is a singleton.

Now let $P \in X^\times$ be arbitrary. Since $P \neq \emptyset$ we can pick $p \in P$. By Proposition \ref{PropHull} we have $p^{-1}P \in \mathcal T$, hence if $\{y_1, y_2\} \in Z[P]$, then by $G$-invariance of $Z$ we have
\[
\{p^{-1}y_1, p^{-1}y_2\} \subset Z[p^{-1}P],
\]
and thus  $p^{-1}y_1 = p^{-1}y_2$ by the previous argument. This implies $y_1 = y_2$ and finishes the proof of the claim and shows that $Z = {\rm gr}(\beta)$. For $P \in \mathcal T$ we have also established that
\begin{equation}\label{betaTransversal}
\beta(P) = (e, h_P)\Gamma.
\end{equation}
To show (iii) we consider $P \in X^{\rm ns} \cap \mathcal T$ and, using Lemma \ref{LemmahP}, pick a sequence $(\gamma_i)$ in $\Gamma_G$ such that $\gamma_i P_0 \to P$, $\tau(\gamma_i) \to h_P^{-1}$ and for all $j \geq i$
\begin{equation}\label{AlmostOneOneMain}
P \cap K_i = \gamma_j P_0 \cap K_i.
\end{equation}
Now fix $i \in \mathbb N$ and consider the finite sets $F:= (K_i \times h_P^{-1}W_0) \cap \Gamma$ and $F_j := (K_i \times \tau(\gamma_j)W_0) \cap \Gamma$. Since $P \in X^{\rm ns}\cap \mathcal T$ and $\beta(P) = (e, h_P)\Gamma$, the window $h_P^{-1}W_0$ is $\Gamma$-regular, and every $\Gamma_H$-translate of $W_0$ is regular as well. Since $\tau(\gamma_j) \to h_P^{-1}$ we can thus apply Lemma \ref{LemmaRegularityConvergence}.(ii) to find $j \geq i$ such that $F = F_j$. For such $j$ we can then apply \eqref{AlmostOneOneMain} to obtain
\[
P \cap K_i  = \gamma_j P_0 \cap K_i = \pi_G(F_j) = \pi_G(F) = \tau^{-1}(h_P^{-1}W_0) \cap K_i.
\]
Since $i$ was arbitrary, this implies $P  = \tau^{-1}(h_P^{-1}W_0)$. This finishes the proof of (iii) and shows that $\beta$ is injective on $X^{\rm ns} \cap \mathcal T$.

We now establish (ii). The inclusion $\mathcal T \subset \beta^{-1}((\{e\} \times H)\Gamma)$ has already been established in \eqref{betaTransversal}. Conversely assume that $P \in X^\times$ with $\beta(P)  = (e, h)\Gamma$ for some $h \in H$. By Proposition \ref{PropHull} we have $p^{-1}P \in \mathcal T$ for every $p \in P$, hence there exists $h_p \in H$ such that $\beta(p^{-1}P) = (e, h_p)\Gamma$. It then follows from $G$-equivariance of $\beta$ that
\[
 (e, h_p)\Gamma = \beta(p^{-1}P) = p^{-1}\beta(P) = (p^{-1}, h)\Gamma,
\]
hence $p \in \Gamma_G$. Since $p \in P$ was arbitrary this implies $P \in \mathcal T$ and finishes the proof of (ii).

Concerning (i), assume that $P_1, P_2 \in X^{\rm ns}$ satisfy $\beta(P_1) = \beta(P_2) = (g,h)\Gamma$ for some $g \in G$, $h \in H$. Then
\[
\beta(g^{-1}P_1) = \beta(g^{-1}P_2) = (e, h)\Gamma,
\]
and hence $\{g^{-1}P_1, g^{-1}P_2\} \subset X^{\rm ns} \cap \mathcal T$ by (ii). Since $\beta$ is injective on $X^{\rm ns} \cap \mathcal T$ we deduce that $g^{-1}P_1 = g^{-1}P_2$ and hence $P_1 = P_2$. This proves (i) and finishes the proof.
\end{proof}
It remains to prove the lemmas.

\begin{proof}[Proof of Lemma \ref{LemmaWindowMagic}] (i) Given $n \in \mathbb N$ we define a subset $W_{0, n} \subset W_0^o$ by
\[W_{0,n}:= \{w \in W^o_0 \mid {V_n^{-1}}w \subseteq W_0^o\}\quad (n \in \mathbb N).\]
We first observe that
\begin{equation}\label{inclaux}
W_{0,1} \subset W_{0,2} \subset \dots \subset W^o_0 \quad \text{and} \quad W_{0,n}\subset \bigcap_{v \in V_n}vW_0.
\end{equation}
Indeed, the first statement follows from the fact that the $V_n$ are descreasing, and if $v \in V_n$ and $w \in W_{0,n}$ then $v^{-1}w \in W_0^o \subseteq W_0$, which implies $w \in vW_0$ and thus establishes \eqref{inclaux}.

Now define $M_n := hW_{0,n} \setminus h'\overline{V_n}W_0$. We claim that there exists $n \in \mathbb N$ such that $M_n$ contains a non-empty open set $U$. Assuming the claim for the moment, let us finish the proof. We can find $k_0$ such that for all $k \geq k_0$ we have $h_k \in hV_n$ and $h_k' \in h'{V_n}$. Then by \eqref{inclaux} we have
\[
h_kW_0 \setminus h_k'W_0 \supset \bigcap_{v \in V_n} hvW_0 \setminus h'V_nW_0 \supset h \bigcap_{v \in V_n}vW_0\setminus  h'\overline{V_n}W_0 \supset M_n \supset U,
\]
i.e. $h_kW_0 \setminus h_k'W_0 \supset U$, which is the statement of the lemma. It thus remains to establish the claim.

Firstly, since $W_0$ has trivial stabilizer we have $hW_0 \setminus h'W_0 \neq \emptyset$. Secondly, since $h'W_0$ is compact and $hW_0^o$ is dense in $hW_0$, $hW_0^o \setminus h'W_0 \subset hW_0 \setminus h'W_0$ is dense, and in particular $hW_0^o \setminus h'W_0 \neq \emptyset$. Thirdly, $hW_0^o \setminus h'W_0$ is open, and thus $m_H(hW_0^o \setminus h'W_0) > 0$. From regularity of the Haar measure we thus deduce that
\begin{equation}
\exists m \in \mathbb N: m_H(hW_0^o \setminus h'\overline{V_{m}}W_0) > 0.
\end{equation}
We fix such an $m$ once and for all and observe that $A := hW_0^o \setminus h'\overline{V_{m}}W_0 \neq \emptyset$.

Since the set $A$ is open and non-empty it contains a a basic open set of the form $U_{n, w} := hV_{n}^{-1}h^{-1}V_{n}w$ for some $n \in \mathbb N$ and $w \in G$. We may assume that $n \geq m$. Then we claim that 
\begin{equation}\label{FinalClaimTopol}
U := V_n w \subset M_n.
\end{equation}
Since $U$ is open this will finish the proof. From the inclusion $U_{n,w} \subset A$ we deduce two things: Firstly, $U_{n,w} \subset hW_0^o$, i.e. $V_{n}^{-1}h^{-1}V_{n}w \subset W_0^o$. By the very definition of $W_{0, n}$ this means that $h^{-1}V_{n}w \subset W_{0, n}$, and hence
\begin{equation}\label{WindowLemEq2}
U = V_{n}w \subset hW_{0, n}.
\end{equation}
Secondly, since $e \in V_n^{-1}$ we have $U = V_nw \subset U_{n,w}$ and thus 
\begin{equation}\label{WindowLemEq1}
U \cap h'\overline{V_{m}}W_0 \subset U_{n,w}\cap h'\overline{V_{m}}W_0\subset A \cap h'\overline{V_{m}}W_0 = \emptyset.
\end{equation}
Combining \eqref{WindowLemEq1} and \eqref{WindowLemEq2} and using that $n \geq m$ and hence $\overline{V_n} \subset \overline{V_m}$
we obtain
\[
U \subset hW_{0, n} \setminus h'\overline{V_{m}}W_0 \subset hW_{0, n} \setminus h'\overline{V_{n}}W_0 = M_n.
\]
This establishes \eqref{FinalClaimTopol} and finishes the proof.

(ii) Let $K \subset G$ be a compact set. Since $h W_0$ and $h_n W_0$ are $\Gamma$-regular for every $n$,
we have
\begin{equation}
\label{firsteq}
(K \times h W_0) \cap \Gamma 
= 
\big( (K \times (h W_0 \setminus h_n W_0^o)) \cap \Gamma \big)
\cup
\big( (K \times (h W_0 \cap h_n W_0)) \cap \Gamma \big)
\end{equation}
and
\begin{equation}
\label{secondeq}
(K \times h_n W_0) \cap \Gamma 
= 
\big( (K \times (h_n W_0 \setminus h W_0^o)) \cap \Gamma \big)
\cup
\big( (K \times (h_n W_0 \cap h W_0)) \cap \Gamma \big).
\end{equation}
Hence, in order to show that 
\begin{equation}
\label{wishedfor}
(K \times h W_0) \cap \Gamma = (K \times h_n W_0) \cap \Gamma
\end{equation}
for large enough $n$, it suffices to show that for large enough $n$, we have
\[
\big(K \times (h W_0 \setminus h_{n} W_0^o)\big) \cap \Gamma 
= 
\big( K \times (h_{n} W_0 \setminus h W_0^o)\big) \cap \Gamma 
= 
\emptyset.
\]
Since $W_0$ is compact and $h_n \ra h$, there is a compact set $L \subset H$ such that
$h W_0 \setminus h_{n} W_0^o \subset L$ and $h_{n} W_0 \setminus h W_0^o \subset L$
for all $n$. Since $\Gamma$ is discrete, this shows that the sets 
\[
A_n = \big( K \times (h W_0 \setminus h_{n} W_0^o)\big) \cap \Gamma
\qand
B_n = \big( K \times (h_{n} W_0 \setminus h W_0^o)\big) \cap \Gamma
\]
vary inside the set of all subsets of the \emph{finite} set $T = (K \times L) \cap \Gamma$. 
In particular, for every sub-sequence of $(h_n)$, there is a further sub-sequence $(h_{n_j})$ 
such that the sequences $(A_{n_j})$ and $(B_{n_j})$ are constant. On the other hand, one readily 
verifies that
\[
\bigcap_j (h W_0 \setminus h_{n_j} W_0^o) \subset h \partial W_0
\qand 
\bigcap_j (h_{n_j} W_0 \setminus h W_0^o) \subset h \partial W_0,
\]
for every sub-sequence $(h_{n_j})$, and thus, since $h W_0$ is $\Gamma$-regular, we conclude that 
\[
\bigcap_j A_{n_j} = \emptyset 
\qand 
\bigcap_j B_{n_j} = \emptyset.
\]
We conclude that every sub-sequence of $(h_n)$ admits a further sub-sequence $(h_{n_j})$ such that 
\[
\big(K \times (h W_0 \setminus h_{n_j} W_0^o)\big) \cap \Gamma 
= 
\big( K \times (h_{n_j} W_0 \setminus h W_0^o)\big) \cap \Gamma 
= 
\emptyset.
\]
for all $j$. We claim that this finishes the proof. Indeed, if \eqref{wishedfor} were to fail for infinitely many $n$, then \eqref{firsteq} and \eqref{secondeq} would tell us that we can find a sub-sequence $(h_{n_j})$ such that either
\[
 \big(K \times (h W_0 \setminus h_{n_j} W_0^o)\big) \cap \Gamma 
\neq \emptyset \qor  
\big( K \times (h_{n_j} W_0 \setminus h W_0^o)\big) \cap \Gamma 
\neq 
\emptyset,
\]
for all $j$, which contradicts what we have just proved. 
\end{proof}

\begin{proof}[Proof of Lemma \ref{LemmahP}] Since $g_nP_0 \to P$ we can find for every $i \in \mathbb N$ some $n_i \in \mathbb N$ and $t_i \in V_i$ such that
\begin{equation}\label{OrbitClosureCondition}
t_ig_{n_i}P_0 \cap K_i = P \cap K_i.
\end{equation}
If we define $\gamma_i:= t_ig_{n_i}$ then $\gamma_iP_0 \cap K_i = P \cap K_i$. We deduce that for all $j \geq i$ we have
\[
\gamma_jP_0 \cap K_i = (\gamma_j P_0 \cap K_j) \cap K_i = P \cap K_j \cap K_i = P \cap K_i.
\]
Since $P \cap K_i  \neq \emptyset$, we may assume by passing to a further subsequence that $\gamma_iP_0 \cap K_i \neq \emptyset$ for all $i \in \mathbb N$. Then for every $i \in \mathbb N$ there exists $p_0 \in P_0 \subset \Gamma_G$ such that $\gamma_ip_0 \in \gamma_iP_0 \cap K_i =P \cap K_i$. Since $P \in \mathcal T$ we deduce that $\gamma_ip_0 \in \Gamma_G$ and thus $\gamma_i \in \Gamma_G$. If we now set 
$s_i := t_i^{-1}$, then $s_i \in V_i^{-1} = V_i$ and $g_{n_i} = s_i\gamma_i$. For this choice of $\gamma_i$ and $s_i$ we have thus established (i) and (iii). Also note that $s_i \in V_i$ implies $s_i \to e$.

We next claim that the set $\{\tau(\gamma_i)\}$ is pre-compact. Suppose otherwise for contradiction. Then for every $i \in \mathbb N$ there exists $j > i$ such that
\[
\tau(\gamma_i)W_0 \cap \tau(\gamma_j)W_0 = \emptyset,
\]
and consequently
\[
\pi_G\left([(K_i \times \tau(\gamma_i)W_0) \cap \Gamma] \cap [(K_j \times \tau(\gamma_j)W_0) \cap \Gamma]\right) = \emptyset.
\]
Since $\pi_G|_\Gamma$ is injective, this can be rewritten as
\[
\pi_G\left(K_i \times \tau(\gamma_i)W_0) \cap \Gamma\right) \cap \pi_G\left(K_j \times \tau(\gamma_j)W_0) \cap \Gamma\right) = \emptyset,
\]
or equivalently
\[
(\gamma_iP_0 \cap K_i) \cap (\gamma_jP_0 \cap K_j) = \emptyset. 
\]
This, however, contradicts (iii), and establishes our claim.

In order to establish (ii) and thereby to finish the proof of the lemma it remains to show that every convergent subsequence of $\tau(\gamma_i)$ converges to the same limit $h_P^{-1}$, which is independent of the sequence $g_n$. We argue again by contradiction and assume otherwise. Then there exist cofinal subsets $I, I' \subset \mathbb N$ and sequences $(\gamma_i)_{i \in I}$, $(\gamma'_{i'})_{i' \in I'}$ satisfying (iii) such that $h_i := \tau(\gamma_i)$ and $h_{i'}' := \tau(\gamma'_{i'})$ converge to different elements of $H$. By Lemma \ref{LemmaWindowMagic}.(i) we then find an open set $U$ such that for all sufficiently large $i, i'$,
\[
U \subset \tau(\gamma_i)W_0 \setminus \tau(\gamma'_{i'})W_0.
\]
Since $\Gamma_H$ is dense in $H$ we have $\Gamma_H \cap U \neq \emptyset$ and thus $(G\times U) \cap \Gamma\neq \emptyset$. For sufficiently large $j$ we thus get $(K_{j} \times U) \cap \Gamma \neq \emptyset$, hence
\[
\emptyset \neq (K_j\times U) \cap \Gamma \subset (K_j \times  \tau(\gamma_i)W_0) \cap \Gamma \setminus [(K_j \times \tau(\gamma'_{i'})W_0) \cap \Gamma].
\]
Applying $\pi_G$, which is injective on $\Gamma$, we obtain
\[
\emptyset \neq (\gamma_iP_0 \cap K_j) \setminus  (\gamma'_{i'}P_0 \cap K_j).
\]
However, we may assume that $i$ and $i'$ are larger than $j$. Then (iii) yields
\[
\gamma_iP_0 \cap K_j = P \cap K_j = \gamma'_{i'}P_0 \cap K_{j},
\]
which is a contradiction.
\end{proof}

\section{The auto-correlation of a model set}\label{SecAutocorrelation}

\subsection{The periodization map of an FLC set} 
We consider the following setting: $G$ is a lcsc group and $P_0 \subset G$ is a subset of finite local complexity. We denote by $X := X_{P_0}$ and $X^\times := X^\times_{P_0}$ the hull and the punctured hull of $P_0$ respectively. While we have in mind the case of a (not-necessarily uniform) model set, large parts of the theory can be carried out in larger generality.

\begin{proposition}[Existence of the parametrization map]\label{PropSiegel} There is a well-defined $G$-equivariant map 
\[ \mathcal P: C_c(G) \to C(X), \quad
\mathcal Pf(P) := \sum_{g \in P} f(g).
\]
\end{proposition}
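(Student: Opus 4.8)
The plan is to verify the three required properties in turn: finiteness of the defining sum (well-definedness and boundedness), continuity of $\mathcal Pf$ on $X$, and $G$-equivariance. The first and third are essentially formal; the continuity statement is the substantive point.

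For well-definedness, fix $f \in C_c(G)$ and set $S := \supp f$, a compact set. I would first extract from the FLC hypothesis a \emph{uniform} local finiteness bound valid simultaneously for all $P \in X$. By Proposition \ref{PropHull} every $P \in X$ satisfies $P^{-1}P \subset P_0^{-1}P_0$, and since $P_0^{-1}P_0$ is discrete there is a symmetric open identity neighbourhood $V_0$ with $P_0^{-1}P_0 \cap V_0 = \{e\}$. Choosing a symmetric open $V_1$ with $V_1V_1^{-1} \subset V_0$, one checks that for distinct $p,p' \in P$ the translates $pV_1, p'V_1$ are disjoint, simultaneously for every $P \in X$ (indeed $p^{-1}p' \in P_0^{-1}P_0 \setminus \{e\}$ lies outside $V_0 \supset V_1V_1^{-1}$). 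Left-invariance of $m_G$ then gives, for any compact $K$,
\[
|P \cap K| \cdot m_G(V_1) = \sum_{p \in P \cap K} m_G(pV_1) \le m_G(KV_1),
\]
so $|P \cap K| \le N(K) := m_G(KV_1)/m_G(V_1) < \infty$ uniformly in $P$. In particular $\mathcal Pf(P) = \sum_{g \in P \cap S} f(g)$ is a finite sum of at most $N(S)$ terms, whence $\mathcal Pf$ is well-defined and bounded by $N(S)\|f\|_\infty$.

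For continuity --- the main obstacle --- I would work with the local topology, which by Corollary \ref{TopologiesCoincide} agrees with the Chabauty--Fell topology on the FLC hull $X$, so that the sets $U_{K,V}(P)$ form a neighbourhood basis of $P$. Fix $P \in X$ and $\eps > 0$, fix a symmetric precompact open neighbourhood $V_1$ of $e$, and put $K := \overline{V_1}S$ and $N := N(K)$. Using that $f \in C_c(G)$ is (left-)uniformly continuous, choose a symmetric open $V \subset V_1$ with $|f(t^{-1}x) - f(x)| < \eps/(N+1)$ for all $t \in V$ and all $x \in G$. For $Q \in U_{K,V}(P)$ there is $t \in V$ with $tQ \cap K = P \cap K$; since $tS \subset \overline{V_1}S = K$, left-translation by $t$ maps $Q \cap S$ injectively into $P \cap K$, and for $p \in P\cap K$ not of the form $tg$ with $g \in S$ one has $f(t^{-1}p) = 0$. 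Hence
\[
\mathcal Pf(Q) = \sum_{g \in Q \cap S} f(g) = \sum_{p \in P \cap K} f(t^{-1}p),
\]
and therefore
\[
|\mathcal Pf(Q) - \mathcal Pf(P)| = \Big| \sum_{p \in P\cap K} \big(f(t^{-1}p) - f(p)\big)\Big| \le N \cdot \frac{\eps}{N+1} < \eps.
\]
This proves continuity at $P$ (including at $P = \emptyset$, where both sides vanish once $Q$ is close enough). The key idea is that finite local complexity forces nearby configurations to match those of $P$ \emph{exactly} after a small left-translation $t$, so the only discrepancy is the displacement of finitely many points by $t$, which the uniform continuity of $f$ controls against the uniform counting bound $N$.

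Finally, for $G$-equivariance I would simply compute, for $g \in G$ and $P \in X$, using the substitution $y = g^{-1}x$ together with the conventions $g.f(x) = f(g^{-1}x)$ and $g.F(P) = F(g^{-1}P)$,
\[
\mathcal P(g.f)(P) = \sum_{x \in P} f(g^{-1}x) = \sum_{y \in g^{-1}P} f(y) = \mathcal Pf(g^{-1}P) = (g.\mathcal Pf)(P),
\]
which is the desired equivariance.
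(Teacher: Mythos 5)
Your proof is correct, but it is genuinely more self-contained than the paper's: the paper merely observes that the sum is finite (each $P \in X$ is locally finite by Proposition~\ref{PropHull} and $\supp f$ is compact), declares equivariance obvious, and then outsources the only substantive point --- continuity of $\mathcal Pf$ in the Chabauty--Fell topology --- to an external reference (\cite[Prop.~5.1]{BH}). You instead prove continuity directly, by combining two ingredients: a volume-counting bound $|P \cap K| \le m_G(K V_1)/m_G(V_1)$ that is \emph{uniform} over all $P$ in the hull (valid because Proposition~\ref{PropHull} forces $P^{-1}P \subset P_0^{-1}P_0$, so a single separating neighbourhood $V_1$ works for every $P \in X$), and the exact-matching description of neighbourhoods in the local topology ($tQ \cap K = P \cap K$ for some small $t$), transferred to the Chabauty--Fell topology via Corollary~\ref{TopologiesCoincide}. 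This route buys two things. First, your uniform bound $N(K)$ is precisely the constant $C_K$ of Lemma~\ref{PeriodizationContinuous} (which the paper again cites from \cite[Prop.~5.4]{BH}), so your argument establishes that lemma simultaneously and for free. Second, it isolates exactly where finite local complexity enters: nearby configurations agree with $P$ on a large compact set after a small translation $t$, so the only error is the displacement of at most $N(K)$ points by $t$, controlled by uniform continuity of $f$. The argument is sound in all details --- including the check at $P = \emptyset$, where the matching identity degenerates to the empty sum --- and it reuses machinery (Appendix~\ref{AppendixLocalTopology}) that the paper develops anyway for the proof of Theorem~\ref{ThmParametrizationMap}.
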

Note that the sum definining $\mathcal Pf(P)$ is finite for every $f \in C_c(G)$ and $P \in X$. Indeed, every such $P$ is locally finite by Proposition \ref{PropHull} and ${\rm supp}(f)$ is compact by assumption. Since equivariance of $\mathcal P$ is obvious, the non-trivial statement in the proposition is continuity of $\mathcal Pf$ in the Chabauty--Fell topology. This is proved in \cite[Prop. 5.1]{BH}.

Note that if $P_0$ is not relatively dense and thus $\emptyset \in X$, then $\mathcal Pf(\emptyset) = 0$ for all $f \in C_c(G)$. Thus the image of $\mathcal P$ is contained in $C_0(X^\times) \subset C_b(X^\times)$. If $P_0$ is relatively dense, then anyway $C(X) = C(X^\times) = C_0(X^\times)$, and thus we can consider $\mathcal P$ as a map into $C_0(X^\times)$ in either case.

\begin{definition} The map $\mathcal P: C_c(G) \to C_0(X^\times)$ is called the \emph{periodization map} of $P_0$.
\end{definition}

The issue of continuity of the periodization map is a subtle one. The periodization map is in general not continuous when $C_c(G)$ and  $C_0(X^\times)$ are equipped with the respective topologies of uniform convergence on compacta. However, it is continuous with respect to the natural Fr\'echet topology on $C_c(G)$ and the topology of uniform convergence on $C_0(X^\times)$. Explicitly, this means the following:
\begin{lemma}[Continuity of the periodization map, {\cite[Prop. 5.4]{BH}}]\label{PeriodizationContinuous} For every compact subset $K \subset G$ there exists $C_K > 0$ such that if $f \in C_c(G)$ with ${\rm supp}(f) \subset K$, then
\begin{equation}
\|\mathcal Pf\|_\infty \quad \leq \quad C_K \cdot \|f\|_\infty.
\end{equation}
\end{lemma}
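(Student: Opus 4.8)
The plan is to reduce the inequality to a single uniform estimate on the number of points of $P$ lying in $K$, valid simultaneously for \emph{all} $P$ in the hull. Since $\supp(f) \subset K$, for every $P \in X$ we have $\mathcal Pf(P) = \sum_{g \in P \cap K} f(g)$, whence the crude bound $|\mathcal Pf(P)| \leq |P \cap K| \cdot \|f\|_\infty$. Thus it suffices to produce a constant $C_K$ with $|P \cap K| \leq C_K$ for every $P \in X$; taking the supremum over $P \in X^\times$ then yields $\|\mathcal Pf\|_\infty \leq C_K \cdot \|f\|_\infty$.

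The first step is to exploit Proposition \ref{PropHull}: every $P \in X$ satisfies $P^{-1}P \subset P_0^{-1}P_0$. Because $P_0$ is FLC, the set $P_0^{-1}P_0$ is locally finite, so $e$ is an isolated point of $P_0^{-1}P_0$; hence there is a symmetric open $V \in \mathfrak U(G)$ with $P_0^{-1}P_0 \cap V = \{e\}$. Choosing a symmetric open $W \in \mathfrak U(G)$ with $WW \subset V$, I claim that every $P \in X$ is $W$-discrete, i.e. $|P \cap gW| \leq 1$ for all $g \in G$. Indeed, if $p, q \in P \cap gW$, then $p^{-1}q \in WW \subset V$ while $p^{-1}q \in P^{-1}P \subset P_0^{-1}P_0$, forcing $p^{-1}q \in P_0^{-1}P_0 \cap V = \{e\}$ and hence $p = q$. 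The crucial point is that the discreteness constant $W$ is inherited from $P_0$ and is therefore \emph{common} to all members of the hull.

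Next, using compactness of $K$, I would cover $K$ by the open sets $\{gW \mid g \in K\}$ and extract a finite subcover $g_1W, \dots, g_N W$. Then for every $P \in X$ we have $P \cap K \subset \bigcup_{i=1}^N (P \cap g_iW)$, and $W$-discreteness gives $|P \cap g_iW| \leq 1$ for each $i$, so that $|P \cap K| \leq N$. Setting $C_K := N$ completes the argument (the case $P = \emptyset$ being trivial since then $\mathcal Pf(P) = 0$).

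The only delicate point — and the one I would watch most carefully — is the uniformity over the entire hull: a priori, distinct elements $P \in X$ could exhibit very different local densities, and the estimate would break down if the discreteness constant degenerated somewhere in the orbit closure. This is exactly what Proposition \ref{PropHull} forbids, by confining all the difference sets $P^{-1}P$ inside the single fixed locally finite set $P_0^{-1}P_0$. Everything else is the standard packing estimate for a uniformly discrete set intersected with a compact region.
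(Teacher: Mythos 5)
Your proof is correct and complete: the reduction to a uniform bound on $|P \cap K|$ over the hull, the inheritance of a common discreteness constant via Proposition \ref{PropHull}, and the finite-cover packing estimate are all sound. Note that the paper itself does not prove this lemma but cites \cite[Prop. 5.4]{BH}; your argument is the standard one behind that reference (FLC $\Rightarrow$ uniform discreteness, inherited by every element of the orbit closure), so you have in effect supplied a self-contained proof of the cited fact rather than a divergent route.
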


\subsection{Periodization of measures} The continuity property of the periodization map as expressed by Lemma \ref{PeriodizationContinuous} implies that if $\nu$ is a probability measure on $X^\times$, then we can define a Radon measure $\mathcal P^*\nu$ on $G$ by $\mathcal P^*\nu(f) := \nu(\mathcal Pf)$, where we think of Radon measures as linear functionals on $C_c(G)$. More generally we define for every $n \geq 1$ a Radon measure ${\eta}_\nu^{(n)}$ on $G^n$ by
\[
{\eta}_\nu^{(n)}(f_1\otimes \dots \otimes f_n) := \int_{X^\times} \mathcal Pf_1 \cdots \mathcal Pf_n \, d\nu \quad(f_1, \dots, f_n \in C_c(G)),
\]
which we call the \emph{$n$-th  correlation measure} of $\nu$. Note that it follows from equivariance of the periodization map, that if $\nu$ is a $G$-invariant measure on $X^\times$, then the correlation measures ${\eta}_\nu^{(n)}$ are invariant under the action of the diagonal subgroup $\Delta(G)$ on $G^n$ by left-multiplication.
\begin{proposition}\label{StoneWeierstrass} Every probability measure on $X^\times$ is determined by its correlation measures.
\end{proposition}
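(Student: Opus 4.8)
The plan is to read the statement as a density assertion and then invoke the Stone--Weierstrass theorem. Let $\mathcal{A} \subset C_0(X^\times)$ denote the self-adjoint (non-unital) subalgebra generated by the functions $\{\mathcal{P}f : f \in C_c(G)\}$; unwinding the definition, its elements are exactly the finite linear combinations of products $\mathcal{P}f_1 \cdots \mathcal{P}f_n$ with $n \geq 1$ and $f_1, \dots, f_n \in C_c(G)$. If $\nu$ and $\nu'$ are probability measures on $X^\times$ with $\eta_\nu^{(n)} = \eta_{\nu'}^{(n)}$ for all $n \geq 1$, then by the very definition of the correlation measures we have $\int_{X^\times} \varphi \, d\nu = \int_{X^\times} \varphi \, d\nu'$ for every $\varphi \in \mathcal{A}$. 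It therefore suffices to show that $\mathcal{A}$ is dense in $C_0(X^\times)$ in the uniform norm: granting this, every $\varphi \in C_0(X^\times)$ is a uniform limit of elements of $\mathcal{A}$, and since $|\nu(\varphi) - \nu(\varphi_k)| \leq \|\varphi - \varphi_k\|_\infty$ we obtain $\nu(\varphi) = \nu'(\varphi)$ for all $\varphi \in C_0(X^\times)$, whence $\nu = \nu'$ by the uniqueness part of the Riesz representation theorem on the locally compact metrizable space $X^\times$.

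To establish density I would verify the hypotheses of the Stone--Weierstrass theorem for subalgebras of $C_0$ of a locally compact Hausdorff space. Self-adjointness is immediate from $\overline{\mathcal{P}f} = \mathcal{P}\bar f$ together with the fact that $\bar f \in C_c(G)$ whenever $f \in C_c(G)$. The remaining two conditions --- that $\mathcal{A}$ vanishes nowhere and separates points --- are where the geometry of the hull enters, through Proposition \ref{PropHull}: every $P \in X^\times$ is a non-empty, closed and discrete (hence locally finite) subset of $G$. For non-vanishing, fix $P \in X^\times$ and choose $p \in P$; since $P$ is discrete and closed there is a relatively compact open neighbourhood $U$ of $p$ with $\overline{U} \cap P = \{p\}$, and any $f \in C_c(G)$ with $\supp(f) \subset \overline{U}$ and $f(p) = 1$ satisfies $\mathcal{P}f(P) = 1 \neq 0$.

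For separation, let $P \neq Q$ in $X^\times$ and assume without loss of generality that there is a point $p \in P \setminus Q$. Since $Q$ is closed with $p \notin Q$ and $P$ is discrete, I may pick a relatively compact open neighbourhood $U$ of $p$ with $\overline{U} \cap Q = \emptyset$ and $\overline{U} \cap P = \{p\}$; choosing $f \in C_c(G)$ supported in $\overline{U}$ with $f(p) = 1$ gives $\mathcal{P}f(P) = 1$ while $\mathcal{P}f(Q) = 0$, so $\mathcal{P}f$ separates $P$ from $Q$. With all three hypotheses verified, Stone--Weierstrass yields the density of $\mathcal{A}$ in $C_0(X^\times)$ and the proposition follows. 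The only genuinely non-routine point is the separation step, and it rests entirely on the local finiteness of the elements of the hull supplied by Proposition \ref{PropHull}: this is precisely what allows a single compactly supported bump function to detect one point of $P$ while ignoring all of $Q$.
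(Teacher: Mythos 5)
Your proposal is correct and follows essentially the same route as the paper: both reduce the statement to uniform density of the algebra generated by $\{\mathcal Pf : f \in C_c(G)\}$ and then apply the Stone--Weierstra\ss{} theorem, the paper handling non-compactness of $X^\times$ via the one-point compactification $X$ and the adjoined constant $1_X$, while you invoke the equivalent $C_0$-version (self-adjoint, nowhere-vanishing, point-separating) directly. The only substantive difference is that the paper cites \cite[Prop.~5.3]{BH} for the point-separation property, whereas you prove it (and the nowhere-vanishing condition) inline from the local finiteness of hull elements guaranteed by Proposition~\ref{PropHull}, via compactly supported bump functions --- which is a correct, self-contained rendering of the same fact.
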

\begin{proof} By \cite[Prop. 5.3]{BH} the image of the periodization map separates point in $X^\times$. If $X^\times = X$, then the proposition thus follows from the Stone--Weierstra\ss\ theorem. If $X^\times$ is non-compact, then its one-point compactification is given by $X$. In this case, the set $\mathcal P(C_c(G)) \cup \{1_X\}$ separates points of $X$. Applying the Stone--Weierstra\ss\ theorem to this set, we deduce that the algebra generated by $\mathcal P(C_c(G))$ and $1_X$ is dense in $C(X)$, which implies that the algebra generated by $\mathcal P(C_c(G))$ is dense in $C_0(X^\times)$ and finishes the proof.
\end{proof}

\subsection{Auto-correlation measures of an invariant measure} We recall that a lcsc group $G$ is \emph{unimodular} if some left-Haar measure on $G$ is also right-invariant. Every lcsc group containing a lattice, or more generally a model set, is necessarily unimodular. To simplify our exposition we are going to make the
\begin{convention} From now on, all lcsc groups are assumed to be unimodular.
\end{convention}
Given a unimodular lcsc group $H$ and a closed unimodular subgroup $K < H$ we denote by 
 \[\mathcal P_K: C_c(H) \to C_c(K \backslash H), \quad \mathcal P_K(f)(Kh) := \int_K f(kh)dm_K(k)\]
the periodization map. In order to define the auto-correlation measures of an invariant measure we will make use of the following general lemma. 
\begin{lemma} Let $H$ be a unimodular lcsc group and $K < H$ a closed unimodular subgroup. Then for every $K$-invariant Radon measure $\eta$ on $H$ there is a unique Radon measure $\overline{\eta}$ on $K \backslash H$ such that
\[
\eta(f) = \overline{\eta}(\mathcal P_K f) \quad (f \in C_c(H)).
\]
\end{lemma}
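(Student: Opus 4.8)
The plan is to construct $\overline{\eta}$ by descending $\eta$ through $\mathcal{P}_K$, and the whole argument rests on two facts: that $\mathcal{P}_K$ maps $C_c(H)$ \emph{onto} $C_c(K\backslash H)$, and that its kernel is annihilated by every left-$K$-invariant Radon measure. First I would record surjectivity. Writing $q\colon H\to K\backslash H$ for the (open) quotient map and given $\varphi\in C_c(K\backslash H)$, one picks $u\in C_c(H)$, $u\geq 0$, with $\mathcal{P}_K u>0$ on the compact set $\supp(\varphi)$; then
\[
f(h) := u(h)\,\frac{\varphi(q(h))}{\mathcal{P}_K u(q(h))}
\]
(extended by $0$) lies in $C_c(H)$ and satisfies $\mathcal{P}_K f=\varphi$, because $\varphi\circ q$ and $\mathcal{P}_K u\circ q$ are constant on $K$-cosets and factor out of the defining integral. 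With surjectivity in hand, uniqueness of $\overline{\eta}$ is immediate: the prescription $\overline{\eta}(\varphi)=\eta(f)$ determines $\overline{\eta}$ on all of $C_c(K\backslash H)$.

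The substance is well-definedness, i.e. the implication $\mathcal{P}_K f=0\Rightarrow\eta(f)=0$. Granting it, $\overline{\eta}(\varphi):=\eta(f)$ (for any $f$ with $\mathcal{P}_K f=\varphi$) is a well-defined linear functional, positive whenever $\eta$ is, and hence a Radon measure by Riesz representation, its continuity in the inductive-limit topology following from the surjectivity construction (or directly from positivity). To prove the implication, fix $f$ with $\mathcal{P}_K f=0$ and, using surjectivity, choose $\psi\in C_c(H)$, $\psi\geq 0$, with $\mathcal{P}_K\psi\equiv 1$ on the compact set $q(\supp f)$. Inserting $1=\int_K\psi(uh)\,dm_K(u)$ for $h\in\supp f$ and using the left-$K$-invariance of $\eta$ in the form $\int_H\Phi(u^{-1}h)\,d\eta(h)=\int_H\Phi(h)\,d\eta(h)$, one computes
\[
\eta(f)=\int_H\!\int_K f(h)\psi(uh)\,dm_K(u)\,d\eta(h)
=\int_H\psi(h)\Big(\int_K f(u^{-1}h)\,dm_K(u)\Big)d\eta(h).
\]
The inner integral equals $\int_K f(uh)\,dm_K(u)=\mathcal{P}_K f(q(h))=0$, where unimodularity of $K$ is used for the inversion-invariance of $m_K$; hence $\eta(f)=0$.

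The one delicate point, and the step I expect to be the main obstacle, is the interchange of integration over the in general non-compact group $K$ needed to pass between the two displayed forms. This is handled by a compact-support estimate: the integrand $(u,h)\mapsto f(u^{-1}h)\psi(h)$ vanishes unless $h\in\supp\psi$ and $u\in\supp\psi\cdot\supp(f)^{-1}$, so in the $u$-variable it is supported in $K\cap\big(\supp\psi\cdot\supp(f)^{-1}\big)$, which is compact since $K$ is closed in $H$ and products of compact sets are compact. Thus all intermediate functions are genuinely compactly supported and Fubini applies without integrability issues.

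Finally, once $\overline{\eta}$ has been built in this way, the defining identity $\eta(f)=\overline{\eta}(\mathcal{P}_K f)$ holds for all $f\in C_c(H)$ by construction, and uniqueness was already established via surjectivity, so the lemma follows. I would note in passing that only unimodularity of $K$ is actually invoked (for the well-definedness of $\mathcal{P}_K$ on $K\backslash H$ and for the inversion step), the unimodularity of $H$ belonging to the surrounding conventions.
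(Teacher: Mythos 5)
Your proposal is correct and follows essentially the same route as the paper: descend $\eta$ through the surjection $\mathcal{P}_K\colon C_c(H)\to C_c(K\backslash H)$, with the whole weight carried by the implication $\mathcal{P}_K f=0\Rightarrow\eta(f)=0$, proved via a Fubini swap combined with left-$K$-invariance of $\eta$ and inversion-invariance of $m_K$. The only differences are cosmetic: you prove surjectivity (the paper cites it), and you use a single cutoff $\psi$ with $\mathcal{P}_K\psi\equiv 1$ on $q(\supp f)$ in a direct argument, where the paper uses a sequence $\rho_n$ with $\mathcal{P}_K\rho_n\to 1$ and a contradiction --- your version is, if anything, a slight streamlining.
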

\begin{proof} Uniqueness of $\overline{\eta}$ follows from surjectivity of the periodization map (see e.g. \cite[Lemma 1.1.1]{Raghunathan}).  Given $\bar f \in C_c(K \backslash H)$, we would like to define $\overline{\eta}(\bar f) := \eta(f)$, where $f$ is a pre-image $\bar f$ under $\mathcal P_K$. To show that this is well-defined we need to show that if $f \in C_c(K)$ satisfies $\mathcal P_Kf = 0$, then $\eta(f) = 0$. Assume for contradiction that $\mathcal P_Kf = 0$, but $\eta(f) \neq 0$. Choose $\rho_n \in C_c(H)$ such that $\mathcal P_K(\rho_n) \to 1$ and observe that
\[
\int_H f(h) \cdot \left(\int_K \rho_n(kh)dm_K(k)\right)d\eta(h) \quad \to\quad \eta(f) \quad \neq \quad 0.
\]
On the other hand, using $K$-invariance of $\eta$ and unimodularity of $K$ we obtain for all $n \in \mathbb N$,
\begin{eqnarray*}
&& \int_H f(h) \cdot \left(\int_K \rho_n(kh)dm_K(k)\right)d\eta(h) \quad =\quad \int_H \left(\int_K f(k^{-1}h) dm_K(k)\right) \rho_n(n) d\eta(h)\\
&=& \int_H \left(\int_K f(kh) dm_K(k)\right) \rho_n(n) d\eta(h) \quad = \quad \int_H \mathcal P_K(f)(Kh)\cdot \rho_n(h) d\eta(h) \quad = \quad 0.
\end{eqnarray*}
This contradiction finishes the proof.
\end{proof}
%


If $\nu$ is a $G$-invariant probability measure on $X^\times$, then its $n$th correlation measure $\eta_\nu^{(n)}$ is a $\Delta(G)$-invariant Radon measure on $G^n$, hence descends to Radon measures $\overline{\eta}_\nu^{(n-1)}$ on the quotient $\Delta(G) \backslash (G \times \dots \times G)$, which we can identify with $G^{n-1}$ via the isomorphism
\[
\Delta(G) \backslash (G \times \dots \times G) \to G^{n-1}, \quad (g_1, \dots, g_n) \mapsto (g_1^{-1}g_2, \dots, g_1^{-1}g_n).
\]
\begin{definition} If $\nu$ is a $G$-invariant probability measure on $X^\times$, then the Radon measure $\overline{\eta}_\nu^{(n)}$ on $G^{n}$ is called the \emph{$n$-th auto-correlation measure} of $\nu$. In particular, the Radon measure $\eta_\nu := \overline{\eta}_\nu^{(1)}$ on $G$ is called the \emph{auto-correlation measure} of $\nu$.
\end{definition}
It follows from Proposition \ref{StoneWeierstrass} that a $G$-invariant probability measure $\nu$ on $X^\times$ is uniquely determined by its auto-correlation measures.
The first auto-correlation measure has a special property not shared by the higher auto-correlation measures.
\begin{proposition}[Positive-definiteness of the auto-correlation]\label{PropAutocor} The auto-correlation $\eta_\nu$ of an invariant probability measure $\nu$ on $X^\times$ is a positive-definite Radon measure on $G$. In fact, it is the unique positive-definite Radon measure on $G$ such that
\begin{equation}\label{Autocor}
\eta_\nu\left(f^* \ast f\right) = \|\mathcal P f\|_{L^2(X^\times, \nu)}^2 \quad (f \in C_c(G)).
\end{equation}
\end{proposition}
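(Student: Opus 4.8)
The plan is to read off the identity \eqref{Autocor} directly from the definition of $\eta_\nu$ as the descent of the second correlation measure $\eta_\nu^{(2)}$, and then to obtain positive-definiteness and uniqueness as formal consequences. Recall that $\eta_\nu^{(2)}$ is the $\Delta(G)$-invariant Radon measure on $G \times G$ determined by $\eta_\nu^{(2)}(f_1 \otimes f_2) = \int_{X^\times} \mathcal Pf_1 \cdot \mathcal Pf_2\, d\nu$, and that $\eta_\nu = \overline{\eta}_\nu^{(1)}$ is its descent along the isomorphism $\Delta(G)\backslash(G\times G) \to G$, $(g_1,g_2)\mapsto g_1^{-1}g_2$. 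By the descent lemma above (applied to $H = G \times G$ and $K = \Delta(G)$, both unimodular), we have $\eta_\nu^{(2)}(F) = \eta_\nu(\mathcal P_{\Delta(G)}F)$ for all $F \in C_c(G\times G)$, where $\mathcal P_{\Delta(G)}$ denotes periodization over the diagonal.

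First I would compute the diagonal periodization on a pure tensor. Unwinding the definition,
\[
\mathcal P_{\Delta(G)}(f_1 \otimes f_2)\big(\Delta(G)(x_1,x_2)\big) = \int_G f_1(gx_1)f_2(gx_2)\, dm_G(g),
\]
and the substitution $u = gx_1$, which preserves $m_G$ by unimodularity, turns this into $\int_G f_1(u)f_2(u\,x_1^{-1}x_2)\, dm_G(u)$. Under the identification $(x_1,x_2)\mapsto s := x_1^{-1}x_2$ this is exactly $(\check f_1 \ast f_2)(s)$, so that $\eta_\nu^{(2)}(f_1 \otimes f_2) = \eta_\nu(\check f_1 \ast f_2)$. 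On the other hand, since $\overline{\mathcal Pf} = \mathcal P\bar f$, the right-hand side of \eqref{Autocor} is
\[
\|\mathcal Pf\|_{L^2(X^\times,\nu)}^2 = \int_{X^\times}\mathcal P\bar f \cdot \mathcal Pf\, d\nu = \eta_\nu^{(2)}(\bar f \otimes f) = \eta_\nu(\check{\bar f}\ast f).
\]
Since $\check{\bar f}(g) = \overline{f(g^{-1})} = f^*(g)$, this equals $\eta_\nu(f^*\ast f)$, establishing \eqref{Autocor}. Positive-definiteness is then immediate: $\eta_\nu(f^*\ast f) = \|\mathcal Pf\|_{L^2(X^\times,\nu)}^2 \geq 0$ for all $f \in C_c(G)$.

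For uniqueness, I would argue that a positive-definite Radon measure is determined by its values on the diagonal elements $f^*\ast f$. The map $(f_1,f_2)\mapsto \eta(f_1^*\ast f_2)$ is a sesquilinear form on $C_c(G)$, conjugate-linear in the first slot because $(\lambda f)^* = \bar\lambda f^*$, so by the polarization identity its value on any pair $(f_1,f_2)$ is a fixed linear combination of the four numbers $\eta\big((f_1 + i^k f_2)^* \ast (f_1 + i^k f_2)\big)$, $k = 0,1,2,3$. Hence two positive-definite measures agreeing on all $f^*\ast f$ agree on all $f_1^*\ast f_2$. To pass from these to equality of measures, I would fix $g \in C_c(G)$ and choose a sequence $(\rho_n)$ of non-negative functions in $C_c(G)$ with $\int_G \rho_n\, dm_G = 1$ and supports shrinking to $e$; then $\rho_n^*\ast g \to g$ uniformly with supports contained in a fixed compact set, i.e. in the inductive-limit topology of $C_c(G)$ on which every Radon measure is continuous, whence $\eta(g) = \lim_{n}\eta(\rho_n^*\ast g)$. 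Applying this to both measures gives equality.

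I expect the only real care to lie in the bookkeeping of the involutions $f \mapsto \check f,\ \bar f,\ f^*$ and of the convolution convention, together with the two uses of unimodularity: that of $G$, to move Haar measure past the substitution, and that of $\Delta(G)\cong G$, to invoke the descent lemma. The structural content, namely reducing \eqref{Autocor} to the tensor computation and then reducing uniqueness to polarization and an approximate-identity argument, is routine.
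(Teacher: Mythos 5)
Your proof is correct and follows essentially the same route as the paper's: both compute the diagonal periodization $\mathcal P_{\Delta(G)}(f_1\otimes f_2)$ (the paper's auxiliary function $f_{12}$) to be $f_1^*\ast f_2$ under the identification $\Delta(G)\backslash(G\times G)\cong G$, and then read off \eqref{Autocor} and positive-definiteness from the descent relation $\eta_\nu^{(2)}(F)=\eta_\nu(\mathcal P_{\Delta(G)}F)$. Your uniqueness argument via polarization plus an approximate identity is a fully justified version of the paper's one-line remark that the elements $f^*\ast f$ span a dense subspace of $C_c(G)$; in fact the polarization step is exactly what that remark needs, since the set $\{f^*\ast f \mid f \in C_c(G)\}$ itself is only a cone and is not literally dense.
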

\begin{proof} Note that if $f_1, f_2 \in C_c(G)$, then
\[
\eta_{\nu}^{(2)}(\overline{f_1} \otimes f_2)  \quad=\quad \int_{X^\times} \overline{\mathcal P f_1} \cdot \mathcal Pf_2 \; d\nu \quad=\quad \langle \mathcal Pf_1, \mathcal Pf_2\rangle_{L^2(X^\times, \nu)}.
\] 
Next observe that for all $g, h \in G$ we have
\begin{eqnarray*}
f_{12}(g^{-1}h) 
&:=& 
\int_{G} (\overline{f_1} \otimes f_2)(rg,rh) \, dm_G(r)\\
&=& 
\int_G \overline{f_1}(r)f_2(rg^{-1}h) \, dm_G(r) \\
&=& 
\int_G f_1^*(r) f_2(r^{-1}g^{-1}h) \, dm_G(r) 
= (f_1^* * f_2)(g^{-1}h).
\end{eqnarray*}
It follows that
\[
\eta_\nu\left(f_1^* * f_2\right) \quad=\quad  \eta_\nu(f_{12}) \quad=\quad \eta_{\nu}^{(2)}(\overline{f_1} \otimes f_2) \quad=\quad  \langle \mathcal Pf_1, \mathcal Pf_2\rangle_{L^2(X^\times, \nu)}.
\]
Specializing to $f_1 = f_2 =: f$ we obtain \eqref{Autocor}. In particular, $\eta_\nu\left(f^* \ast f\right) \geq 0$ for all $f \in C_c(G)$, showing that $\eta_\nu$ is positive-definite. Finally, $\eta_\nu$ is uniquely determined by \eqref{Autocor}, since $\{f^* \ast f \mid f \in C_c(G)\}$ is dense in $C_c(G)$.
\end{proof}
For the rest of this article we will focus almost exclusively on the (first) auto-correlation measure and not consider the higher auto-correlation measures. We will, however, briefly comment on the $0$th auto-correlation measure in Subsection \ref{SecSiegel}.

\begin{remark} If $P_0$ is an arbitrary FLC subset of a lcsc group $G$, then in general there may not exist any $G$-invariant probability measure on $X^\times$. There are two notable exceptions. Firstly, if $G$ is amenable and $P_0\subset G$ is a Delone set, then $X^\times = X$ is compact, and thus there will always exist a $G$-invariant probability measure on $X^\times$. Secondly, if $P_0$ happens to be a regular model set, then a $G$-invariant probability measure on $X^\times$ exists by Theorem \ref{ThmUE}, and there is in fact a unique such measure.
\end{remark}
\begin{definition} Assume that $P_0 \subset G$ is a FLC set whose punctured hull $X^\times$ admits a unique $G$-invariant probability measure $\nu$. Then the auto-correlation measure $\eta_{P_0} := \eta_\nu$ is called the \emph{auto-correlation measure} of $P_0$.
\end{definition}

\subsection{A formula for the auto-correlation of a model set}
We now apply the theory developed so far to the case of model sets. For the rest of this section we fix a regular model set $P_0 = P_0(G, H, \Gamma, W_0)$ in a lcsc group $G$. 
As before we denote by $X^\times$ the punctured hull of $P_0$, by $\nu$ the unique $G$-invariant probability measure on $X^\times$ and by $Y := (G \times H)/\Gamma$ the associated parameter space. By Theorem \ref{ThmParametrizationMap} we have a parametrization map $\beta: X^\times \to Y$ which satisfies $\beta_\ast \nu = m_Y$, where $m_Y$ denotes the invariant probability measure on $Y$. Recall that the auto-correlation measure $\eta_{P_0}$  of $P_0$  was defined as the auto-correlation measure $\eta_{\nu}$ of $\nu$.

Given a bounded Riemann-integrable function $F : G\times H \to \mathbb R$ with compact support we denote by $\mathcal P_\Gamma F$ the $\Gamma$-{\em periodization} of $F$, i.e. the function
\[
\mathcal P_\Gamma F: Y \to \mathbb R, \quad \mathcal P_\Gamma F((g,h)\Gamma) = \sum_{\gamma \in \Gamma} F((g,h)\gamma).
\]
Then we have the following formula for the auto-correlation of $P_0$.
\begin{theorem}[Auto-correlation of model sets]\label{AutocorrelationFormula} 
Let $\eta = \eta_{P_0}$ be the auto-correlation measure of $P_0$. Then $\eta$ is the unique Radon measure on $G$ satisfying
\begin{equation}
\eta(f^\ast \ast f) = \|\mathcal P_\Gamma(f \otimes \chi_{W_0})\|^2_{L^2(Y)} \quad(f \in C_c(G)).
\end{equation}
\end{theorem}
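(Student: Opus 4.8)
The plan is to reduce the statement to the identity already established in Proposition~\ref{PropAutocor}, namely $\eta_{P_0}(f^* \ast f) = \|\mathcal P f\|_{L^2(X^\times, \nu)}^2$, and then to evaluate the right-hand side by transporting the integral from the punctured hull $X^\times$ to the parameter space $Y$ through the parametrization map $\beta$. The two inputs from Theorem~\ref{ThmUE} that drive this are that $\nu(X^{\rm ns}) = 1$ and that $\beta_*\nu = m_Y$, so that $\int_{X^\times} (\Phi \circ \beta)\, d\nu = \int_Y \Phi\, dm_Y$ for every bounded Borel function $\Phi$ on $Y$.

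The key step will be a pointwise identity on the non-singular locus: I want to show that for every $P \in X^{\rm ns}$ with $\beta(P) = (g,h)\Gamma$ one has
\[
\mathcal P f(P) = \mathcal P_\Gamma(f \otimes \chi_{W_0})(\beta(P)).
\]
To establish this I would first observe that $X^{\rm ns}$ is $G$-invariant, since the condition defining $Y^{\rm ns}$ constrains only the $H$-coordinate and $\beta$ is $G$-equivariant; hence $g^{-1}P \in X^{\rm ns}$ has parameter $(e,h)\Gamma$, and Theorem~\ref{ThmParametrizationMap}(ii),(iii) yields the explicit description $g^{-1}P = \tau^{-1}(h^{-1}W_0)$, i.e.\ $P = g\,\tau^{-1}(h^{-1}W_0)$. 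Writing each $\gamma \in \Gamma$ as $(\gamma_G, \gamma_H)$ with $\gamma_G = \pi_G(\gamma)$, $\gamma_H = \pi_H(\gamma)$, and using injectivity of $\pi_G|_\Gamma$ together with the relation $\tau(\gamma_G) = \gamma_H$, I would rewrite
\[
\mathcal P f(P) = \sum_{\gamma_G \in \tau^{-1}(h^{-1}W_0)} f(g\gamma_G) = \sum_{\gamma \in \Gamma} f(g\gamma_G)\,\chi_{W_0}(h\gamma_H) = \sum_{\gamma \in \Gamma}(f \otimes \chi_{W_0})((g,h)\gamma),
\]
where the middle equality uses that $\tau(\gamma_G) \in h^{-1}W_0$ is equivalent to $\chi_{W_0}(h\gamma_H) = 1$. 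The final sum is precisely $\mathcal P_\Gamma(f \otimes \chi_{W_0})((g,h)\Gamma)$. Since its value depends only on the coset $\beta(P)$, the choice of representative $(g,h)$ in the intermediate formula is immaterial.

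Granting the pointwise identity, the conclusion follows at once: as $|\mathcal P f|^2$ and $|\mathcal P_\Gamma(f\otimes\chi_{W_0})\circ \beta|^2$ agree on the conull set $X^{\rm ns}$, I would compute
\[
\eta_{P_0}(f^* \ast f) = \|\mathcal P f\|_{L^2(X^\times,\nu)}^2 = \int_{X^\times} |\mathcal P_\Gamma(f\otimes\chi_{W_0})\circ\beta|^2\,d\nu = \int_Y |\mathcal P_\Gamma(f\otimes\chi_{W_0})|^2\,dm_Y,
\]
which is exactly $\|\mathcal P_\Gamma(f\otimes\chi_{W_0})\|_{L^2(Y)}^2$. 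Uniqueness of $\eta$ among Radon measures then follows, just as in Proposition~\ref{PropAutocor}, from the density of $\{f^* \ast f : f \in C_c(G)\}$ in $C_c(G)$.

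The main obstacle will be the pointwise identity, and within it the bookkeeping required to pass from the geometric description $P = g\,\tau^{-1}(h^{-1}W_0)$ to the analytic periodization $\mathcal P_\Gamma(f \otimes \chi_{W_0})$: this forces one to unravel the definition of the $\ast$-map $\tau$ and to use injectivity of $\pi_G|_\Gamma$ so that the defining sum of $\mathcal P f(P)$ carries no multiplicities. Everything else is measure transport, forced once Theorems~\ref{ThmParametrizationMap} and~\ref{ThmUE} are in hand. I would also record in passing that Jordan-measurability of $W_0$ is what ensures $\mathcal P_\Gamma(f \otimes \chi_{W_0})$ is a bounded measurable function, hence a genuine element of $L^2(Y)$, so that the right-hand side is well-defined.
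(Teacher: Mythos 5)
Your proposal is correct and follows essentially the same route as the paper: reduce to Proposition~\ref{PropAutocor}, establish the pointwise identity $\mathcal P f(P) = \mathcal P_\Gamma(f\otimes\chi_{W_0})(\beta(P))$ on $X^{\rm ns}$ (this is exactly the paper's Lemma~\ref{PeriodizationLemma}), and transport the integral via $\beta_*\nu = m_Y$ and $\nu(X^{\rm ns}) = 1$. The only (cosmetic) difference is that you reach the transversal by translating by a representative $g$ of $\beta(P)$ and invoking Theorem~\ref{ThmParametrizationMap}(ii), whereas the paper translates by a point $p \in P$ and invokes Proposition~\ref{PropHull}; both then apply Theorem~\ref{ThmParametrizationMap}(iii) and unravel the sum identically.
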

\noindent In view of \eqref{Autocor} this is an immediate consequence of the following lemma and the fact that $\nu(X^{\rm ns})=m_Y(Y^{\rm ns}) = 1$.
\begin{lemma}\label{LemmaSiegelFormula}\label{PeriodizationLemma} If $P \in X^{\rm ns}$ and $f \in C_c(G)$, then
\[
\mathcal P f(P) = \mathcal P_\Gamma(f \otimes \chi_{W_0})(\beta(P)).
\]
\end{lemma}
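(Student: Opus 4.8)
The plan is to prove the identity first for points $P$ lying in the canonical transversal $\mathcal T$, where Theorem \ref{ThmParametrizationMap}(ii)--(iii) give an explicit description of both $P$ and $\beta(P)$, and then to bootstrap to an arbitrary $P \in X^{\rm ns}$ by equivariance, using that every $G$-orbit in $X^\times$ meets $\mathcal T$ (Proposition \ref{PropHull}). Throughout, both sides are finite sums, since $f$ has compact support, $W_0$ is compact, $\Gamma$ is discrete, and every $P \in X^\times$ is locally finite.

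For the base case, suppose $P \in \mathcal T \cap X^{\rm ns}$. By Theorem \ref{ThmParametrizationMap}(ii) we may write $\beta(P) = (e, h_P)\Gamma$, and by part (iii) we have $P = \tau^{-1}(h_P^{-1}W_0)$. Since $P \subset \Gamma_G$, the left-hand side becomes
\[
\mathcal P f(P) = \sum_{g \in \Gamma_G,\, \tau(g) \in h_P^{-1}W_0} f(g) = \sum_{g \in \Gamma_G} f(g)\,\chi_{W_0}(h_P\tau(g)),
\]
using $\tau(g) \in h_P^{-1}W_0 \Leftrightarrow \chi_{W_0}(h_P\tau(g)) = 1$. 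For the right-hand side I would parametrize $\Gamma$ by $\Gamma_G$ through the bijection $\pi_G|_\Gamma$, writing each $\gamma \in \Gamma$ as $\gamma = (g, \tau(g))$ with $g \in \Gamma_G$, so that $(e, h_P)\gamma = (g, h_P\tau(g))$ and hence
\[
\mathcal P_\Gamma(f \otimes \chi_{W_0})((e,h_P)\Gamma) = \sum_{g \in \Gamma_G} f(g)\,\chi_{W_0}(h_P\tau(g)),
\]
which coincides with the left-hand side.

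For an arbitrary $P \in X^{\rm ns}$, I would pick any $p \in P$ and pass to $p^{-1}P$. By Proposition \ref{PropHull} we have $p^{-1}P \subset \Gamma_G$, hence $p^{-1}P \in \mathcal T$; moreover $X^{\rm ns}$ is $G$-invariant (the condition defining $Y^{\rm ns}$ involves only the $H$-coordinate, so $Y^{\rm ns}$ is $G$-invariant, and $X^{\rm ns} = \beta^{-1}(Y^{\rm ns})$ by $G$-equivariance of $\beta$), so $p^{-1}P \in \mathcal T \cap X^{\rm ns}$ and the base case applies. Writing $f_p(x) := f(px)$ one has $\mathcal P f(P) = \sum_{x \in P} f(x) = \sum_{y \in p^{-1}P} f_p(y) = \mathcal P f_p(p^{-1}P)$, while $\beta(p^{-1}P) = p^{-1}\beta(P)$ by equivariance. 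Applying the base case to $(f_p, p^{-1}P)$ and writing $\beta(P) = (g_0,h_0)\Gamma$, the generic term of the resulting sum is $f_p(p^{-1}g_0\gamma_G)\,\chi_{W_0}(h_0\tau(\gamma_G)) = f(g_0\gamma_G)\,\chi_{W_0}(h_0\tau(\gamma_G))$, so the translation by $p$ cancels and one obtains $\mathcal P_\Gamma(f_p \otimes \chi_{W_0})(p^{-1}\beta(P)) = \mathcal P_\Gamma(f \otimes \chi_{W_0})(\beta(P))$. Chaining these equalities gives $\mathcal P f(P) = \mathcal P_\Gamma(f \otimes \chi_{W_0})(\beta(P))$. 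The cancellation here is exactly the left-$G$-invariance of the periodization $\mathcal P_\Gamma$ under $(p,e)$, namely $\mathcal P_\Gamma(f_p \otimes \chi_{W_0})(y) = \mathcal P_\Gamma(f \otimes \chi_{W_0})(py)$.

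Once the reduction is set up the computation is essentially bookkeeping, matching the two parametrizations of $\Gamma$ and tracking the base-point translations. The only genuinely delicate input is the base-case identification $P = \tau^{-1}(h_P^{-1}W_0)$ supplied by Theorem \ref{ThmParametrizationMap}(iii): this is where non-singularity of $P$ is essential, since it guarantees that $P$ is \emph{exactly} the preimage of the translated window, rather than agreeing with it only up to the boundary $\partial W_0$. I expect this to be the main obstacle in the sense that, without the regularity encoded in $X^{\rm ns}$, the two sides could differ precisely on the window boundary.
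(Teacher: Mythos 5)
Your proposal is correct and follows essentially the same route as the paper's proof: reduction to the canonical transversal via Proposition \ref{PropHull} and $G$-invariance of $X^{\rm ns}$, the explicit description $\beta(P')=(e,h_{P'})\Gamma$ and $P'=\tau^{-1}(h_{P'}^{-1}W_0)$ from Theorem \ref{ThmParametrizationMap}(ii)--(iii), and the computation of $\mathcal P_\Gamma$ by parametrizing $\Gamma$ through $\pi_G|_\Gamma$. The paper merely performs your ``base case plus transport by $f_p$'' as a single inline computation at the point $(p,h_{P'})\Gamma$, so the two arguments coincide step for step.
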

\begin{proof} Let $P \in X^{\rm ns}$ and $p \in P$. By Proposition \ref{PropHull} and $G$-invariance of $X^{\rm ns}$ we then have $P' := p^{-1}P \in \mathcal T \cap X^{\rm ns}$. By Theorem \ref{ThmParametrizationMap} we then have
\[
\beta(P') = (e, h_{P'})\,\Gamma \quad \text{and}\quad \tau^{-1}(h_{P'}^{-1}W_0) = P'.
\]
Now $G$-equivariance of $\beta$ yields 
\[
\beta(P) = \beta(pP') = p\beta(P') = p(e, h_{P'})\, \Gamma = (p, h_{P'})\, \Gamma,
\]
and thus we obtain for every $f \in C_c(G)$,
\begin{eqnarray*}
\mathcal P_{\Gamma}(f\otimes \chi_{W_0})( \beta(P)) &=& \mathcal P_{\Gamma}(f\otimes \chi_{W_0})((p, h_{P'})\, \Gamma) =  \sum_{\gamma \in \Gamma_G} f(p\gamma) \chi_{W_0}(h_{P'}\tau(\gamma))\\
&=& \sum_{\gamma \in \tau^{-1}(h_{P'}^{-1}W_0)} f(p\gamma) = \sum_{\gamma \in P'} f(p\gamma) = \sum_{\gamma \in pP'}f(\gamma) = \sum_{\gamma \in P} f(\gamma)\\
&=& \mathcal Pf(P).
\end{eqnarray*}
\end{proof}
\subsection{The covolume of regular model sets}\label{SecSiegel}
If $P_0$ is a regular model set in $G$, then by definition $P_0$ arises from a cut-and-project scheme $(G, H, \Gamma)$ and window $W_0$, but $H$, $\Gamma$ and $W_0$ are not uniquely determined by $P_0$. In this subsection we explain how to use the $0$the auto-correlation to construct an invariant of $P_0$ from some normalized volume of the window $W_0$. 

Let us fix a reference Haar measure $m_G$ on $G$ once and for all. Given a Haar measure $m_H$ on $H$, we denote by ${\rm covol}(\Gamma)$ the covolume of $\Gamma$ in $G \times H$ with respect to $m_G \otimes m_H$. Then the quotient
\[
{\rm covol}_{m_G}(P_0) := \frac{{\rm covol}(\Gamma)}{m_H(W_0)}
\] 
does not depend on the choice of Haar measure $m_H$ on $H$, and we refer to this quotient as the \emph{covolume} of $P_0$ in $G$ with respect to $m_G$. This is motivated by the fact that if $H$ is the trivial group, then $P_0$ is a lattice in $G$ and ${\rm covol}_{m_G}(P_0)$ is the covolume of $P_0$ in $G$ with respect to $m_G$ in the usual sense. We claim that the covolume depends only on $m_G$ and the regular model set $P_0 = P_0(G, H, \Gamma, W_0)$, but not on the choices of $H$, $\Gamma$ and $W_0$. To establish the claim, let us denote by $\nu$ the unique invariant measure on $X^\times = X^\times_{P_0}$. Since the first correlation measure $\eta^{(1)}_\nu$ of $\nu$ is $G$-invariant, it is a multiple of $m_G$ and we claim:
\begin{proposition}
If $P_0 = P_0(G, H, \Gamma, W_0)$ is a regular model set, then
\[
\eta^{(1)}_\nu(f) = \int_{X^\times} \mathcal P f \; d\nu = \frac 1 {{\rm covol}_{m_G}(P_0)} \cdot \int_G f\; dm_G \quad (f \in C_c(G)). 
\]
In particular, ${\rm covol}_{m_G}(P_0)$ depends only on $P_0$ and $m_G$, but not on $H$, $\Gamma$ and $W_0$.
\end{proposition}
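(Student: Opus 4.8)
The plan is to reduce the proposition to a single unfolding computation on the parameter space $Y$, using the parametrization map $\beta$ to transport everything from $X^\times$ to $Y$ and then Weil's integration formula for the lattice $\Gamma < G \times H$. First I would invoke Theorem \ref{ThmUE}, which gives that $\beta : (X^\times, \nu) \to (Y, m_Y)$ is a measure isomorphism with $\beta_\ast \nu = m_Y$ and $\nu(X^{\rm ns}) = 1$, together with Lemma \ref{NSgeneric}, which gives $m_Y(Y^{\rm ns}) = 1$. Since $X^{\rm ns}$ is conull, I may restrict the integral $\int_{X^\times} \mathcal P f \, d\nu$ to $X^{\rm ns}$, apply the pointwise identity $\mathcal P f(P) = \mathcal P_\Gamma(f \otimes \chi_{W_0})(\beta(P))$ of Lemma \ref{LemmaSiegelFormula}, and push forward along $\beta$ to obtain
\[
\int_{X^\times} \mathcal P f \, d\nu = \int_{Y} \mathcal P_\Gamma(f \otimes \chi_{W_0}) \, dm_Y \quad (f \in C_c(G)).
\]

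It then remains to evaluate the right-hand side. Write $L := G \times H$ with Haar measure $m_L := m_G \otimes m_H$; since $G$ and $H$ are unimodular, so is $L$, and $m_L$ is bi-invariant. Weil's formula for the lattice $\Gamma < L$ produces the $L$-invariant measure $\widetilde{m}_Y$ on $Y = L/\Gamma$ associated to $m_L$ and the counting measure on $\Gamma$, characterized by $\int_Y \mathcal P_\Gamma F \, d\widetilde{m}_Y = \int_L F \, dm_L$ and having total mass $\widetilde{m}_Y(Y) = {\rm covol}(\Gamma)$. Normalizing to the probability measure $m_Y = {\rm covol}(\Gamma)^{-1} \widetilde{m}_Y$ and specializing to $F = f \otimes \chi_{W_0}$, Fubini gives $\int_L F \, dm_L = m_H(W_0)\int_G f \, dm_G$, so that
\[
\int_Y \mathcal P_\Gamma(f \otimes \chi_{W_0}) \, dm_Y = \frac{m_H(W_0)}{{\rm covol}(\Gamma)} \int_G f \, dm_G = \frac{1}{{\rm covol}_{m_G}(P_0)} \int_G f \, dm_G,
\]
using the definition of ${\rm covol}_{m_G}(P_0)$. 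This is precisely the claimed formula.

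The one technical point requiring care, which I regard as the main obstacle, is that Weil's formula is usually stated for $F \in C_c(L)$, whereas $f \otimes \chi_{W_0}$ is merely bounded, compactly supported, and Riemann-integrable, since $W_0$ is only Jordan-measurable. I would address this by first establishing the unfolding identity for all nonnegative Borel $F \in L^1(L)$ via monotone convergence (here $\mathcal P_\Gamma F$ is a nonnegative measurable function on $Y$, finite at each point because $\Gamma$ is discrete and $W_0$ is compact), and then extending to $f \otimes \chi_{W_0}$ by splitting $f$ into real and imaginary, positive and negative, parts; alternatively one sandwiches $\chi_{W_0}$ between continuous functions using $m_H(\partial W_0) = 0$. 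The only other thing to keep straight is the bookkeeping between the probability normalization $m_Y$ and the covolume normalization $\widetilde{m}_Y$.

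Finally, the \emph{in particular} assertion is immediate from the displayed formula. The left-hand side $\eta^{(1)}_\nu(f) = \int_{X^\times} \mathcal P f \, d\nu$ is built entirely from $P_0$ itself, namely from its punctured hull $X^\times_{P_0}$, the periodization map $\mathcal P$ attached to $P_0$, and the unique invariant probability measure $\nu$ on $X^\times_{P_0}$; none of these refers to the auxiliary presentation $(H, \Gamma, W_0)$. Choosing any $f \in C_c(G)$ with $\int_G f \, dm_G \neq 0$ and solving the identity for ${\rm covol}_{m_G}(P_0)$ therefore expresses this number purely in terms of $P_0$ and the fixed reference measure $m_G$, proving its independence of $H$, $\Gamma$, and $W_0$.
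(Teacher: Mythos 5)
Your proof is correct and takes essentially the same route as the paper's: both transport the integral to $Y$ via Lemma \ref{LemmaSiegelFormula} and $\beta_\ast \nu = m_Y$, then unfold over $\Gamma$ and apply Fubini to get $m_H(W_0)\,m_G(f)$ up to the covolume normalization, the paper merely simplifying the bookkeeping by rescaling $m_H$ so that ${\rm covol}(\Gamma) = 1$ instead of carrying the covolume through explicitly. Your extra care in extending the unfolding identity from $C_c(G \times H)$ to the merely Riemann-integrable function $f \otimes \chi_{W_0}$ addresses a point the paper leaves implicit, and is a sound refinement rather than a different argument.
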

\begin{proof} Replacing $m_H$ by ${\rm covol}(\Gamma)^{-1} \cdot m_H$ we may assume that ${\rm covol}(\Gamma) =1$. Since
$\beta_*\nu = m_Y$, Lemma \ref{LemmaSiegelFormula} then implies that for every $f \in C_c(G)$,
\begin{eqnarray*}
\int_{X^{\times}} \mathcal P f \, d\nu &=& \int_{Y} \mathcal P(f \otimes \chi_{W_0}) dm_Y\\
&=& \int_G \int_H f \otimes \chi_{W_0} dm_G dm_H\\
&=& m_H(W_0) \cdot m_G(f), 
\end{eqnarray*}
which finishes the proof.
\end{proof}

\section{Approximations of the auto-correlation measure}\label{SecApprox}

\subsection{An abstract approximation theorem for the auto-correlation} We return to the general setting where $G$ is a lcsc group and $P_0 \subset G$ is a subset of finite local complexity. We denote by $X := X_{P_0}$ and $X^\times := X^\times_{P_0}$ the hull and the punctured hull of $P_0$ respectively and assume that there exists a $G$-invariant probability measure $\nu$ on $X^\times$. Our goal is to express the auto-correlation measure $\eta_\nu$ of $\nu$ as a limit of finite sums of Dirac measures, similarly to the classical definition of the auto-correlation of a uniform model set in an abelian group.

More precisely, our goal is to find conditions on a sequence $(F_t)$ of subsets of $G$ and a class of functions $\mathcal A \subset C_c(G)$ such that the finite sums
\begin{equation}\label{sigmatf}
\sigma_t(f) := \frac{1}{m_G(F_t)} \sum_{x \in P_0 \cap F_t} \sum_{y \in P_0} f(x^{-1}y)\end{equation}
approximate $\eta_\nu(f)$ for all $f \in \mathcal A$.

To formulate the conditions on the sequence $(F_t)$, we fix an admissible metric on $G$ and denote by $B_\delta$ the open ball of radius $\delta$ around the identity. Given a subset $L \subset G$ and $\delta > 0$ we then denote 
\[
L_\delta = L B_\delta \qand L_{-\delta} = \bigcap_{t \in B_\delta} Lt.
\]
The following definition is a weakening of the notion of an \emph{admissible sequence} from \cite{GorodnikN-10}.

\begin{definition}\label{DefWeaklyAdmissible}
We say that a sequence $(F_t)$ of compact subsets of $G$ is 
\emph{weakly admissible} if each $(F_t)$ has positive Haar measure and there are continuous functions $\alpha, \beta : [0,1) \ra \bR_{+}$ 
with $\alpha(0) = 0$ and $\beta(0) = 0$ such that
\[
(F_t)_\delta \subset F_{t + \alpha(\delta)}
\qand
\sup_s \frac{m_G(F_{s + \delta})}{m_G(F_s)} = 1  + \beta(\delta),
\]
for all $t, \delta > 0$. We shall refer to the pair $(\alpha, \beta)$ as the \emph{parameters} of 
$(F_t)$.
\end{definition}
Concerning the class of functions $\mathcal A$ we are going to assume the following condition.
\begin{definition}\label{DefGeneric}
Let $(F_t)$ be a sequence of compact subsets of $G$ of positive Haar measures. We say that a linear sub-space $\cA \subset C_c(G)$ is \emph{generic} 
with respect to $\nu$ and the sequence $(F_t)$ if
\[
\eta_{\nu}(f_1^* * f_2) 
= 
\lim_{t \to \infty} \frac{1}{m_G(F_t)} \int_{F_t} \overline{\mathcal P f_1(s^{-1}. P_0)} \,\mathcal P f_2(s^{-1}. P_0) \; dm_G(s)
\quad \textrm{for all $f_1, f_2 \in \cA$}.
\]
\end{definition}
Now we can state the desired approximation theorem.

\begin{theorem}[Abstract approximation theorem]
\label{approxthm1}
Suppose that $(F_t)$ is a weakly admissible sequence of compact subsets of $G$ and that $\cA \subset C_c(G)$ is generic with respect to $\nu$ and $(F_t)$.
Then for every $f \in \cA$ the finite sums \eqref{sigmatf} converge to the auto-correlation, i.e.
\[
\eta_{\nu}(f) = \lim_{t \ra \infty} \sigma_t(f).
\]
\end{theorem}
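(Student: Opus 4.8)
The plan is to reduce the theorem to the genericity hypothesis by converting the discrete double sum $\sigma_t(f)$ into the quadratic orbit integral of Definition \ref{DefGeneric}, at the cost of inserting an approximate identity to ``count'' the points of $P_0$. First I would rewrite, for each $x \in P_0$, the inner sum as a value of the periodization map: since $e \in x^{-1}P_0 \in X^\times$ and $\mathcal Pf(x^{-1}P_0) = \sum_{y\in P_0}f(x^{-1}y)$, one has
\[
\sigma_t(f) = \frac{1}{m_G(F_t)}\sum_{x \in P_0 \cap F_t}\mathcal Pf(x^{-1}P_0).
\]
Next I fix a symmetric approximate identity $(\psi_\delta)_{\delta>0} \subset C_c(G)$ with $\psi_\delta \geq 0$, $\supp \psi_\delta \subset B_\delta$ and $\int_G \psi_\delta\, dm_G = 1$, chosen inside $\cA$ so that genericity applies to the pairs $(\psi_\delta, f)$, and set
\[
A_{t,\delta} := \frac{1}{m_G(F_t)}\int_{F_t}\mathcal P\psi_\delta(s^{-1}.P_0)\,\mathcal Pf(s^{-1}.P_0)\,dm_G(s).
\]
Since $\psi_\delta$ is real, $\overline{\mathcal P\psi_\delta} = \mathcal P\psi_\delta$, and genericity gives $A_{t,\delta}\to \eta_\nu(\psi_\delta^**f)$ as $t\to\infty$ for each fixed $\delta$.

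The heart of the argument is a sum-to-integral comparison establishing that $|\sigma_t(f)-A_{t,\delta}|\le E(\delta)$ for all large $t$, with $E(\delta)\to 0$ as $\delta\to 0$ \emph{uniformly in $t$}. Expanding $\mathcal P\psi_\delta(s^{-1}P_0) = \sum_{x\in P_0}\psi_\delta(s^{-1}x)$ and using unimodularity to substitute, the $x$-th contribution to the integral is $c_x := \int_{F_t}\psi_\delta(s^{-1}x)\,\mathcal Pf(s^{-1}P_0)\,dm_G(s)$, which vanishes unless $xB_\delta$ meets $F_t$ (so $x\in (F_t)_\delta$) and equals the full integral $\int_G \psi_\delta(w)\,\mathcal Pf(wx^{-1}P_0)\,dm_G(w)$ whenever $xB_\delta \subset F_t$ (so $x\in (F_t)_{-\delta}$). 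For such interior $x$ the uniform continuity of $f$, i.e.\ $\sup_z|f(wz)-f(z)|\to 0$ as $w\to e$, together with the uniform local finiteness of the sets in $X$ (Proposition \ref{PropHull}), yields $|c_x - \mathcal Pf(x^{-1}P_0)| \le C_K\,\omega_f(\delta)$ with $\omega_f(\delta)\to 0$; summing over the at most $C'm_G(F_t)$ points of $P_0\cap F_t$ (again by uniform discreteness) bounds the interior error by $C'C_K\,\omega_f(\delta)$.

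The remaining discrepancy comes entirely from the boundary layer $(F_t)_\delta\setminus (F_t)_{-\delta}$: the terms of $\sigma_t(f)$ with $x\in P_0\cap(F_t\setminus (F_t)_{-\delta})$ and the terms of $A_{t,\delta}$ with $x$ in that layer, each bounded in modulus by $\|\mathcal Pf\|_\infty$. Here weak admissibility does the work. From $(F_t)_\delta\subset F_{t+\alpha(\delta)}$ and the inclusion $F_{t-\alpha(\delta)}\subset (F_t)_{-\delta}$ (which follows since $(F_{t-\alpha(\delta)})_\delta\subset F_t$), together with the ratio bound $m_G(F_{s+\delta})/m_G(F_s)\le 1+\beta(\delta)$, one obtains $m_G((F_t)_\delta\setminus(F_t)_{-\delta}) \le 2\beta(\alpha(\delta))\,m_G(F_t)$. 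Combined with the $V$-discreteness of $P_0$, which bounds a point count by the Haar measure of a slightly enlarged layer, this bounds the number of boundary points by $O(\beta(\alpha(\delta))\,m_G(F_t))$. Hence the boundary contribution to $m_G(F_t)\,|\sigma_t(f)-A_{t,\delta}|$ is $O(\beta(\alpha(\delta))\,m_G(F_t))$, and dividing by $m_G(F_t)$ produces the claimed uniform bound $E(\delta)$, which tends to $0$ as $\delta\to 0$ because $\alpha$ and $\beta$ are continuous with $\alpha(0)=\beta(0)=0$.

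Finally I would assemble the three estimates. Since $\eta_\nu$ is a Radon measure (Proposition \ref{PropAutocor}) and $\psi_\delta^**f\to f$ uniformly with supports contained in the fixed compact set $\overline{B_1}\,\supp f$, continuity of $\eta_\nu$ on $C_c(G)$ gives $\eta_\nu(\psi_\delta^**f)\to\eta_\nu(f)$ as $\delta\to 0$. Therefore, for each fixed $\delta$,
\[
\limsup_{t\to\infty}\,|\sigma_t(f)-\eta_\nu(f)| \;\le\; E(\delta) + |\eta_\nu(\psi_\delta^**f)-\eta_\nu(f)|,
\]
and letting $\delta\to 0$ forces the left-hand side to vanish, so $\sigma_t(f)\to\eta_\nu(f)$. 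I expect the main obstacle to be the boundary estimate of the sum-to-integral comparison: one must control the layer $(F_t)_\delta\setminus(F_t)_{-\delta}$ simultaneously for all $t$, which is precisely the role of the two weak-admissibility parameters $\alpha$ and $\beta$, and then pass from its Haar measure to a count of $P_0$-points using uniform discreteness. A secondary subtlety, which the argument handles by making $E(\delta)$ uniform in $t$ \emph{before} invoking genericity, is the legitimate interchange of the limits $t\to\infty$ and $\delta\to 0$.
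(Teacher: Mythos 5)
Your proposal is correct and follows essentially the same route as the paper's proof: both smear the point sums with an approximate identity $\rho=\psi_\delta$ supported in $B_\delta$ (implicitly assumed to lie in $\cA$, just as the paper does when it applies genericity to the pair $(\rho,f)$), identify $\lim_{t}$ of the resulting orbital integral with $\eta_\nu(\rho^\ast \ast f)$ via genericity, compare sum and integral uniformly in $t$ using uniform continuity, weak admissibility and a packing bound on point counts (the paper's Lemma \ref{approxlemma}, Lemma \ref{PofiniteinFt} and Corollary \ref{corPofiniteinFt}), and finally let $\delta \to 0$ using that $\eta_\nu$ is a Radon measure. The only divergence is bookkeeping: the paper proves a two-sided sandwich and manipulates $\limsup/\liminf$ of the normalized sums, thereby avoiding any direct count of $P_0$-points in the layer $(F_t)_\delta \setminus (F_t)_{-\delta}$, whereas you bound $|\sigma_t(f)-A_{t,\delta}|$ directly via a measure-plus-packing estimate on that layer; both variants are sound.
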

The proof of Theorem \ref{approxthm1} will be given in Subsection \ref{SubsecProofApprox}. We emphasize that no compactness assumption on $X^\times$ is made in Theorem \ref{approxthm1}.

\subsection{An approximation theorem for FLC subsets of amenable groups}
To illustrate how Theorem \ref{approxthm1} can be applied in practice, we consider first the case where $G$ is amenable.
\begin{corollary}[Approximation theorem for amenable groups]\label{approxthmamenable} Assume that $G$ is amenable and let $(F_t)$ be a weakly admissible F\o lner sequence of compact subsets in $G$. If the hull $X^\times$ of a FLC subset $P_0 \subset G$ is compact and uniquely ergodic, then
\[
\eta_{P_0}(f) =\lim_{t \ra \infty} \sigma_t(f)
\quad 
\textrm{for all $f \in C_c(G)$}.
\]
\end{corollary}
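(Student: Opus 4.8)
The plan is to deduce the corollary from Theorem~\ref{approxthm1} applied with $\cA = C_c(G)$. Since $X^\times$ is uniquely ergodic, its unique $G$-invariant probability measure $\nu$ is precisely the measure appearing in the definition of $\eta_{P_0} = \eta_\nu$, and $(F_t)$ is weakly admissible by hypothesis. Hence the only thing left to check is that the full space $C_c(G)$ is generic with respect to $\nu$ and $(F_t)$ in the sense of Definition~\ref{DefGeneric}. Granting this, Theorem~\ref{approxthm1} immediately yields $\eta_{P_0}(f) = \lim_{t\to\infty}\sigma_t(f)$ for every $f \in C_c(G)$, which is the assertion.

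To verify genericity I would first rewrite the defining identity using Proposition~\ref{PropAutocor}. By \eqref{Autocor} its left-hand side equals $\langle \mathcal Pf_1, \mathcal Pf_2\rangle_{L^2(X^\times,\nu)} = \int_{X^\times}\overline{\mathcal Pf_1}\cdot\mathcal Pf_2\,d\nu$. Setting $\Phi := \overline{\mathcal Pf_1}\cdot\mathcal Pf_2$, genericity of $C_c(G)$ thus amounts to the single limit
\[
\int_{X^\times}\Phi\,d\nu = \lim_{t\to\infty}\frac{1}{m_G(F_t)}\int_{F_t}\Phi(s^{-1}.P_0)\,dm_G(s)
\]
for all $f_1, f_2\in C_c(G)$. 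Here compactness of $X^\times$ is used in an essential way: it gives $C_0(X^\times)=C(X^\times)$, so that $\mathcal Pf_1,\mathcal Pf_2$, and hence $\Phi$, are genuine continuous functions on the compact $G$-space $X^\times$, and the right-hand side is a bona fide ergodic average of a continuous function over the orbit of $P_0$.

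The remaining assertion is exactly the F\o lner-averaged convergence that characterises unique ergodicity for actions of amenable groups: for a uniquely ergodic compact $G$-space $(X^\times,\nu)$ and a F\o lner sequence $(F_t)$, the averages $m_G(F_t)^{-1}\int_{F_t}\Phi(s^{-1}.x)\,dm_G(s)$ converge to $\int_{X^\times}\Phi\,d\nu$ for every $\Phi\in C(X^\times)$ (uniformly in $x$, and in particular at the point $x=P_0$). I would prove this by the usual weak-$*$ compactness argument: if it failed there would exist $\eps>0$ and points $x_t$ for which the averages stay $\eps$-far from $\int\Phi\,d\nu$; extracting a weak-$*$ limit $\mu$ of the empirical measures $m_G(F_t)^{-1}\int_{F_t}\delta_{s^{-1}.x_t}\,dm_G(s)$ on the (weak-$*$ compact) space of probability measures on $X^\times$ produces a $G$-invariant probability measure, which must equal $\nu$ by unique ergodicity, contradicting the $\eps$-separation. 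The only substantive step, and the main obstacle, is the verification that such weak-$*$ accumulation points are genuinely $G$-invariant; this is exactly where the F\o lner property of $(F_t)$ enters, via an estimate of $\|g_*\mu_t - \mu_t\|$ by a symmetric-difference ratio of the averaging sets that tends to $0$ (some care being required since it is $s^{-1}$, rather than $s$, that ranges over $F_t$, so that the relevant asymptotic invariance is under the appropriate side of translation). Having thereby established genericity of $C_c(G)$, the corollary follows at once from Theorem~\ref{approxthm1}.
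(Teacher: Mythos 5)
Your proposal is correct and follows essentially the same route as the paper: the paper likewise reduces the corollary to Theorem~\ref{approxthm1} by observing that unique ergodicity of the compact hull $X^\times$, via the pointwise ergodic theorem for uniquely ergodic systems (extended from $\Z$ to amenable groups along F\o lner sequences), makes all of $C_c(G)$ generic with respect to $\nu$ and $(F_t)$. The only difference is one of detail, not of substance: the paper cites this ergodic theorem, whereas you sketch its standard weak-$*$ compactness proof (including the correct caveat about the side of translation invariance forced by the $s^{-1}$ in the averages).
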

\begin{proof} If $G$ is amenable and $X^\times$ is compact and uniquely ergodic, then the pointwise ergodic theorem for uniquely ergodic systems (see e.g. \cite[Thm. 4.10]{EinsiedlerWard} for $G= \Z$; the extension to amenable groups is straight-forward) implies that $C_c(G)$ is generic with respect to any F\o lner sequence. Then Corollary \ref{approxthmamenable} follows from Theorem \ref{approxthm1}. 
\end{proof}

\subsection{An approximation theorem for regular model sets in non-amenable groups}
The goal of this subsection is to illustrate that Theorem \ref{approxthm1} also applies in many non-amenable situations. To this end we are going to establish a general theorem concerning the approximation of the autocorrelation measure of a regular model sets, which contains Theorem \ref{ThmLieGroupsIntro} from the introduction on regular model sets in semisimple Lie groups as a special case.

Throughout this subsection $P_0 = P_0(G, H, \Gamma, W_0)$ denotes a regular model set in a lcsc group $G$ in the sense of Definition \ref{def_regular}. For large parts of this subsection $G$ can be an arbitrary lcsc group, although for the explicit examples below we will assume that $G$ is a semisimple Lie group. We recall from Theorem 
\ref{main1} that the punctured hull $X^{\times}$ of $P_0$ admits a unique $G$-invariant probability measure $\nu$, and by definition $\eta_{P_0}= \eta_{\nu}$. In the sequel we denote by $Y := (G\times H)/\Gamma$ the parameter space of $X^\times$ and by $\beta : X^\times \ra Y$ the parametrization map from Theorem \ref{ThmParametrizationMap}. We also recall from Lemma \ref{PeriodizationLemma} that for $f \in C_c(G)$ we have
\begin{equation}
\label{betaprop}
\beta^* \cP_\Gamma(f \otimes \chi_{W_o}) = \cP f, \quad \textrm{$\nu$-almost everywhere}.
\end{equation}
Our goal is to find conditions on a weakly admissible sequence $(F_t)$ of compact subsets of $G$ which guarantee that $C_c(G)$ is generic with respect to $\nu$ and $(F_t)$, for then Theorem \ref{approxthm1} will imply that
\[
\eta_{P_0}(f) = \lim_{t \ra \infty} \sigma_t(f) \quad \text{for all }f\in C_c(G).
\]
We begin by introducing some notation. Given a sequence $(F_t)$ of compact sets in $G$ with positive measures, we define a
sequence $(\beta_t)$ of probability measures on $G$ by
\begin{equation}
\label{defbetat}
\beta_t(\phi) = \frac{1}{m_G(F_t)} \int_{F_t} \phi(g) \, dm_G(g) \quad \textrm{for $\phi \in C_c(G)$}.
\end{equation}
\begin{definition}[(Almost) everywhere goodness]
We say that a sequence $(F_t)$ of compact subsets of $G$ with positive measures is 
\begin{itemize}
\item \emph{everywhere good} for $(Y,m_Y)$ if
\[
(\beta_t * f)(y) \ra \int_Y f \, dm_Y, \quad \textrm{for all $f \in C_c(Y)$ and \emph{every} $y \in Y$}.
\]
By a straightforward approximation argument, the same will also hold for all compactly supported 
$m_Y$-Riemann integrable functions on $Y$. \\

\item \emph{almost everywhere good} for $(Y,m_Y)$ if 
\[
(\beta_t * f)(y) \ra \int_Y f \, dm_Y, \quad \textrm{for all $f \in L^2(Y,m_Y)$ and \emph{$m_Y$-almost every} $y \in Y$}.
\]
\end{itemize}
\end{definition}
\begin{theorem}[Approximation theorem for the auto-correlation of a general regular model set]\label{mainapprox} Let $P_0$ be a regular model set in a lcsc group $G$ and let $(F_t)$ be a weakly admissible sequence  of compact and symmetric subsets of $G$ which are almost everywhere good for $(Y, m_Y)$. Then
\[
\eta_{P_0}(f) =\lim_{t \ra \infty} \sigma_t(f)
\quad 
\textrm{for all $f \in C_c(G)$}.
\]
\end{theorem}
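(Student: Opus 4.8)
The plan is to deduce the theorem from the abstract approximation theorem (Theorem~\ref{approxthm1}): since $(F_t)$ is weakly admissible by hypothesis, it suffices to prove that $\cA = C_c(G)$ is generic with respect to $\nu$ and $(F_t)$ in the sense of Definition~\ref{DefGeneric}. First I would reformulate this genericity condition entirely on the parameter space $Y$. By the polarization of \eqref{Autocor} (see the proof of Proposition~\ref{PropAutocor}) together with $\beta_\ast\nu = m_Y$ and \eqref{betaprop}, the left-hand side of Definition~\ref{DefGeneric} equals
\[
\eta_\nu(f_1^\ast \ast f_2) = \langle \cP f_1, \cP f_2\rangle_{L^2(X^\times,\nu)} = \int_Y \overline{\Phi_1}\,\Phi_2 \; dm_Y, \qquad \Phi_i := \cP_\Gamma(f_i \otimes \chi_{W_0}).
\]
For the right-hand side I would observe that $s^{-1}.y_0 = (s^{-1},e)\Gamma$ lies in $Y^{\rm ns}$ for \emph{every} $s \in G$, because the relevant window is $W_0$ itself, which is $\Gamma$-regular (Definition~\ref{def_regular}). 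Hence $s^{-1}.P_0 \in X^{\rm ns}$ for all $s$, and Lemma~\ref{PeriodizationLemma} gives $\cP f_i(s^{-1}.P_0) = \Phi_i(s^{-1}.y_0)$ \emph{pointwise} (not merely $\nu$-almost everywhere). Writing $\Psi := \overline{\Phi_1}\,\Phi_2$ and recalling the averaging measures $\beta_t$ from \eqref{defbetat}, so that $(\beta_t \ast \Psi)(y_0) = \tfrac{1}{m_G(F_t)}\int_{F_t}\Psi(s^{-1}.y_0)\,dm_G(s)$, the entire genericity condition collapses to the single assertion
\[
(\beta_t \ast \Psi)(y_0) \;\longrightarrow\; \int_Y \Psi \; dm_Y \qquad (t \to \infty)
\]
for all functions $\Psi$ of the above form.

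The main obstacle is now isolated: the hypothesis only provides almost everywhere goodness, i.e.\ convergence of $(\beta_t \ast \Psi)(y)$ for $m_Y$-almost every $y$, whereas we must control the single, possibly exceptional, point $y_0$. Since neither the $G$-orbit nor the $H$-orbit of $y_0$ need meet the conull good set, one cannot simply translate into it. My plan is to first prove the convergence at $y_0$ for \emph{continuous} compactly supported $F$ on $Y$ by a two-sided equicontinuity argument, and then pass to $\Psi$ by approximation. Let $(\alpha,\beta)$ be the parameters of $(F_t)$. The two equicontinuity inputs are:
\begin{enumerate}[(a)]
\item (Equicontinuity in the $G$-direction, uniform in $t$.) For $g \in B_\delta$ one has $g^{-1}F_t = (F_tg)^{-1} \subseteq ((F_t)_\delta)^{-1} \subseteq F_{t+\alpha(\delta)}$ by weak admissibility and the symmetry $F_t = F_t^{-1}$; since also $F_t \subseteq F_{t+\alpha(\delta)}$ and the two sets have equal measure, the measure-ratio bound of Definition~\ref{DefWeaklyAdmissible} yields $m_G(F_t \triangle g^{-1}F_t) \le 2\beta(\alpha(\delta))\, m_G(F_t)$, whence
\[
\big|(\beta_t \ast F)(g.z) - (\beta_t \ast F)(z)\big| \le 2\beta(\alpha(\delta))\,\|F\|_\infty \quad \text{for all } z \in Y,\ g \in B_\delta,\ t.
\]
\item (Equicontinuity in the $H$-direction at $y_0$.) As the $G$- and $H$-actions on $Y$ commute, $(\beta_t \ast F)(h.y_0) - (\beta_t \ast F)(y_0) = (\beta_t \ast (F_h - F))(y_0)$ with $F_h := F(h.\,\cdot\,)$; for continuous compactly supported $F$ one has $\|F_h - F\|_\infty \to 0$ as $h \to e$, uniformly in $t$.
\end{enumerate}
It is precisely the symmetry of $F_t$ that converts the left-translate $g^{-1}F_t$ into a right-thickening controlled by weak admissibility in (a).

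With these in hand I would reach $y_0$ through a genuinely full-dimensional average. Fix nonnegative $\phi \in C_c(G)$, $\psi \in C_c(H)$ of integral $1$ with supports in small balls $B_\delta$ and $O_\delta \subset H$, chosen small enough that the quotient map $q : G \times H \to Y$ is injective there (using discreteness of $\Gamma$); then $\mu := q_\ast\big((\phi \otimes \psi)\,(m_G \otimes m_H)\big)$ is a probability measure on $Y$ with $\mu \ll m_Y$. Almost everywhere goodness and dominated convergence (integrands bounded by $\|F\|_\infty$, converging $\mu$-a.e.\ since $\mu \ll m_Y$) give $\int_Y (\beta_t \ast F)\,d\mu \to \int_Y F\,dm_Y$. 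On the other hand, writing $q(g,h) = (g,h).y_0$ and comparing $(\beta_t \ast F)((g,h).y_0)$ with $(\beta_t \ast F)(y_0)$ via (a) and (b), I obtain
\[
\Big|(\beta_t \ast F)(y_0) - \int_Y (\beta_t \ast F)\,d\mu\Big| \le 2\beta(\alpha(\delta))\|F\|_\infty + \sup_{h \in O_\delta}\|F_h - F\|_\infty =: \eps(\delta),
\]
uniformly in $t$. Letting $t \to \infty$ and then $\delta \to 0$ (so $\eps(\delta) \to 0$) yields $(\beta_t \ast F)(y_0) \to \int_Y F\,dm_Y$ for every continuous compactly supported $F$.

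Finally I would remove continuity. Because $W_0$ is Jordan-measurable, $f_i \otimes \chi_{W_0}$ is Riemann-integrable with compact support, hence so are $\Phi_i$ and $\Psi = \overline{\Phi_1}\Phi_2$ on $Y$. Given $\eps>0$, sandwich $\Psi$ between continuous compactly supported $F_- \le \Psi \le F_+$ with $\int_Y (F_+ - F_-)\,dm_Y < \eps$ (possible since the discontinuity set of $\Psi$ is $m_Y$-null). Positivity of the $\beta_t$ gives $(\beta_t \ast F_-)(y_0) \le (\beta_t \ast \Psi)(y_0) \le (\beta_t \ast F_+)(y_0)$, and the continuous case pins $\liminf$ and $\limsup$ of $(\beta_t \ast \Psi)(y_0)$ to within $\eps$ of $\int_Y \Psi\,dm_Y$; letting $\eps \to 0$ establishes the required convergence. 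This proves genericity of $C_c(G)$, and Theorem~\ref{approxthm1} then delivers $\eta_{P_0}(f) = \lim_t \sigma_t(f)$ for all $f \in C_c(G)$. The crux of the argument is the upgrade from almost everywhere to the single point $y_0$, which is why both the symmetry of the $F_t$ (for the $G$-direction) and a full $(G \times H)$-neighbourhood average (to compensate for the null $G$- and $H$-orbits of $y_0$) are indispensable.
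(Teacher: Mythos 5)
Your proof is correct, and it reaches the theorem by a genuinely different route than the paper at the one non-trivial point, namely the upgrade from almost-everywhere goodness to goodness at the basepoint $y_0 = (e,e)\Gamma$. Both arguments share the same outer structure: reduce to genericity of $C_c(G)$ via Theorem \ref{approxthm1}, rewrite both sides of Definition \ref{DefGeneric} on $Y$ using Lemma \ref{PeriodizationLemma} (your observation that $s^{-1}.P_0 \in X^{\rm ns}$ for \emph{every} $s \in G$, because the relevant window $W_0$ is itself $\Gamma$-regular, is exactly what legitimizes the paper's evaluation at $y_0$ as well), and handle the Riemann-integrable function $\Psi$ by sandwiching with continuous functions. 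The difference is in the middle step. The paper first notes that weak admissibility plus symmetry gives quasi-uniformity, then invokes Theorem 5.22 of Gorodnik--Nevo \cite{GorodnikN-10} to obtain that the set of good points for each function is $G$-invariant and conull, and finally uses countability of $\Gamma$ together with the identity $(G \times U)\Gamma = G \times H$ (density of $\Gamma_H$) to propagate goodness from a conull set to \emph{every} point of $Y$ (Lemma \ref{everywhere}). You instead prove goodness only at the single point $y_0$, which is all the theorem needs, by an elementary two-sided equicontinuity estimate: symmetry of $F_t$ converts the left translate $g^{-1}F_t$ into $(F_t g)^{-1}$, so weak admissibility gives $m_G(F_t \mathbin{\triangle} g^{-1}F_t) \le 2\beta(\alpha(\delta))\, m_G(F_t)$ uniformly in $t$ and in the base point, the commuting $H$-action handles the transverse direction, and averaging over a full $(G\times H)$-neighbourhood of $y_0$ (whose pushforward to $Y$ is absolutely continuous with bounded density $\cP_\Gamma(\phi\otimes\psi)$) lets dominated convergence import the almost-everywhere hypothesis. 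What each approach buys: the paper's argument yields the stronger conclusion of everywhere goodness, a statement of independent interest, but outsources the key invariance result to \cite{GorodnikN-10}; yours is self-contained, avoiding both that citation and the density input $(G\times U)\Gamma = G \times H$, at the cost of concluding convergence only at $y_0$. Note that both arguments make essential (though different) use of the symmetry hypothesis on the sets $F_t$.
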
 
\begin{example}[Weakly admissible sequences in semisimple Lie groups, after Gorodnik--Nevo \cite{GorodnikN-10}] If $G$ is a connected semisimple real 
Lie group, and $(F_t)$ is the sequence of compact and symmetric sets in $G$ as defined in Theorem \ref{ThmLieGroupsIntro}, then
\begin{itemize}
\item $(F_t)$ is almost everywhere good for $(Y,m_Y)$ (in fact, almost everywhere good for any ergodic $G$-space).
\item $(F_t)$ is weakly admissible (and thus in particular quasi-uniform, see below).
\end{itemize}
Both results are established in Sections 3 and 4 of \cite{GorodnikN-10}. (In fact, this paper discusses the notions above and many variants thereof in great details,
and many examples are worked out. )
\end{example}
Note that Theorem \ref{ThmLieGroupsIntro} is an immediate consequence of Theorem \ref{mainapprox} and the previous example. We now turn to the proof of Theorem \ref{mainapprox}. We will need the following notion.
\begin{definition}[Quasi-uniformity]
We say that a sequence $(F_t)$ of compact subsets of $G$ with positive measures is 
\emph{quasi-uniform} if
\begin{itemize}
\item For every $\eps > 0$, there is an open neighborhood $\mathcal{O}$ of $e$ in $G$
such that for all sufficiently large $t$, we have $\mathcal{O} F_t \subset F_{t+\eps}$.
\item For every $\delta > 0$, there exists $\eps > 0$ such that for all sufficiently large  $t$,
\[
m_G(F_{t+\eps}) \leq (1 + \delta)m_G(F_t).
\]
\end{itemize}
\end{definition}
We note that every weakly admissible sequence of compact and symmetric sets is automatically quasi-uniform. In view of Theorem \ref{approxthm1} above it thus remains to show only that if $(F_t)$ is a weakly admissible sequence of compact subsets of $G$ which is quasi-uniform and almost everywhere good for $(Y,m_Y)$, then $C_c(G)$ is generic with respect to $\nu$ and $(F_t)$. We will break the proof into the following two lemmas.
\begin{lemma}
\label{everywhere}
Suppose that $(F_t)$ is a quasi-uniform sequence of compact subsets in $G$ of positive measures
which is almost everywhere good for $(Y,m_Y)$. Then $(F_t)$ is everywhere good for $(Y,m_Y)$.
\end{lemma}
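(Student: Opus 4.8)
The plan is to upgrade almost-everywhere convergence to everywhere convergence in two moves: first I would produce a \emph{single} conull ``good'' set that works simultaneously for all continuous test functions and is moreover invariant under the $H$-action, and then I would use quasi-uniformity to transport goodness from this set to an arbitrary point along a small left translation by $G$. Throughout I write $(\beta_t * f)(y) = \frac{1}{m_G(F_t)}\int_{F_t} f(g^{-1}.y)\,dm_G(g)$ and denote by $q : G\times H \to Y$ the quotient map.

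First I would fix a countable family $\{f_n\}\subset C_c(Y)$ that is dense in $C_0(Y)$ for the uniform norm (possible since $Y$ is second countable). Applying almost-everywhere goodness to each $f_n\in L^2(Y,m_Y)$ and intersecting the countably many conull sets yields a conull set $Y_0$ on which $(\beta_t*f_n)(y)\to\int_Y f_n\,dm_Y$ for every $n$. Since $\beta_t$ is a probability measure we have $\|\beta_t*f\|_\infty\le\|f\|_\infty$, and since $m_Y$ is a probability measure $|\int_Y(f-f_n)\,dm_Y|\le\|f-f_n\|_\infty$; a three-$\eps$ argument then shows $(\beta_t*f)(y)\to\int_Y f\,dm_Y$ for \emph{every} $f\in C_c(Y)$ and every $y\in Y_0$. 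Next, because the $G$- and $H$-actions on $Y$ commute and $m_Y$ is $H$-invariant, for $h\in H$ and $L_h(z):=h.z$ one has $(\beta_t*f)(h.y)=(\beta_t*(f\circ L_h))(y)$ with $\int_Y f\circ L_h\,dm_Y=\int_Y f\,dm_Y$; hence the set of points good for all $f$ is $H$-invariant, and I take $Y_0$ to be this set.

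The key structural step is then to lift $Y_0$ through $q$. Since $q$ intertwines left multiplication by $(e,h)$ with the $H$-action, the set $q^{-1}(Y_0)$ is invariant under left translation in the $H$-coordinate and is therefore of the product form $A\times H$ for some $A\subset G$; as $\Gamma$ is discrete, $q$ is a countable-to-one local measure isomorphism, so conullity of $Y_0$ forces $A$ to be conull in $(G,m_G)$. I expect this reduction to be the main obstacle, since the $G$-action alone is far from transitive and its orbits are $m_Y$-null, so one cannot find good points inside a small $G$-translate of a given point without first exploiting the $H$-invariance to obtain the product description. Granting it, for any neighbourhood $\mathcal O$ of $e$ and any $g\in G$ the open set $\mathcal Og$ has positive measure and hence meets the conull set $A$, which gives $\mathcal O^{-1}A=G$ and therefore $\mathcal O^{-1}Y_0=Y$: every $y\in Y$ admits some $u\in\mathcal O$ with $u.y\in Y_0$.

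Finally I would quantify quasi-uniformity. Given $\delta>0$, choose $\eps>0$ with $m_G(F_{t+\eps})\le(1+\delta)m_G(F_t)$ for large $t$, and then a symmetric neighbourhood $\mathcal O$ of $e$ with $\mathcal OF_t\subset F_{t+\eps}$. For $u\in\mathcal O$ the substitution $g=uk$ gives $(\beta_t*f)(u.y)=\frac{1}{m_G(F_t)}\int_{u^{-1}F_t}f(k^{-1}.y)\,dm_G(k)$, and since $u^{-1}F_t,F_t\subset F_{t+\eps}$ have equal measure $m_G(F_t)$, one gets $m_G(u^{-1}F_t\,\triangle\,F_t)\le 2\delta\,m_G(F_t)$ and hence the uniform estimate $|(\beta_t*f)(u.y)-(\beta_t*f)(y)|\le 2\delta\|f\|_\infty$ for all large $t$. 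Combining with the previous step, given $y\in Y$ I pick $u\in\mathcal O$ with $u.y\in Y_0$; then $(\beta_t*f)(u.y)\to\int_Y f\,dm_Y$ while $(\beta_t*f)(y)$ stays within $2\delta\|f\|_\infty$ of it, so $\limsup_t|(\beta_t*f)(y)-\int_Y f\,dm_Y|\le 2\delta\|f\|_\infty$. Letting $\delta\to0$ proves that $(F_t)$ is everywhere good for $(Y,m_Y)$.
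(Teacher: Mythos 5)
Your proof is correct, but it takes a genuinely different route from the paper's. The paper obtains everywhere-goodness \emph{exactly}: it cites Theorem 5.22 of Gorodnik--Nevo (this is where quasi-uniformity enters for them) to get, for each test function, a $G$-\emph{invariant} conull set of good points; it then applies the same $H$-translation trick you use (replacing $f$ by $f_h(y)=f((e,h)y)$), intersects over the countable group $\Gamma$, and finally invokes the density relation $(G\times U)\Gamma = G\times H$ --- i.e.\ the density of the projection of $\Gamma$ to $H$ --- to conclude that the resulting $G$-invariant, saturated, conull good set is all of $Y$. You never use $G$-invariance of the good set, nor the density of the projection of $\Gamma$ to $H$: instead you make the simultaneous good set $H$-invariant, lift it through the quotient map to a product $A\times H$ with $A$ conull in $G$ (Fubini), and then exploit quasi-uniformity \emph{directly and quantitatively}, via the symmetric-difference bound $m_G(u^{-1}F_t\,\triangle\,F_t)\le 2\delta\, m_G(F_t)$ for $u\in\mathcal O_\delta$, to transfer \emph{approximate} goodness from the nearby good point $u.y$ to $y$, letting $\delta\to 0$ at the end. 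In effect you re-prove, in an ``approximate transfer'' form, the consequence of quasi-uniformity that the paper imports from Gorodnik--Nevo in an ``exact invariance'' form. What each buys: the paper's argument is shorter given the citation; yours is self-contained, and it shows the lemma does not actually need the irreducibility of the cut-and-project scheme --- only the commutativity of the $G$- and $H$-actions, the $H$-invariance of $m_Y$, and the product structure of $G\times H$ modulo the countable group $\Gamma$.

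Two small points you should tighten. First, for the Fubini step you need $q^{-1}(Y_0)=A\times H$ to be measurable: this is fine because for fixed $t$ the map $y\mapsto(\beta_t*f)(y)$ is continuous, so the set of good points for each $f_n$ (hence their countable intersection $Y_0$) is Borel, being of the form $\bigcap_m\bigcup_T\bigcap_{t\ge T}\{y: |(\beta_t*f_n)(y)-\textstyle\int f_n\,dm_Y|\le 1/m\}$, a countable combination of closed sets. Second, the phrase ``countable-to-one local measure isomorphism'' should be unpacked to the standard fact that, for the quotient by the discrete (hence countable) subgroup $\Gamma$, the preimage of an $m_Y$-null set is Haar-null in $G\times H$; with that stated, conullity of $A$ follows exactly as you say.
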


\begin{proof}
We pick $f \in C_c(G)$. By Theorem 5.22 in \cite{GorodnikN-10}, there exists a \emph{$G$-invariant} and $m_Y$-conull subset $Y^f \subset Y$ such that
\[
(\beta_t * f)(y) \ra \int_Y f \, dm_Y, \quad \textrm{for all $y \in Y^f$}.
\]
Given a \emph{compact} subset $K \subset Y$, we can find a countable dense subset $D \subset C(K)$. A 
straightforward approximation argument shows that 
\[
Y_K := \bigcap_{f \in D} Y^f = \big\{ y \in Y \, : \, (\beta_t * f)(y) \ra \int_Y f \, dm_Y, \enskip \textrm{for all $f \in C(K)$} \big\}.
\]
We see that $Y_K$ is again a $G$-invariant and $m_Y$-conull set, and we shall prove that $Y_K = Y$. Since $K$ is arbitrary, this finishes the proof of Lemma \ref{everywhere}. \\

To do this, let us fix a compact set $K \subset Y$ and an open and pre-compact set $U \subset H$ 
which contains $e$. One readily shows that the irreducibility of $\Gamma$ forces 
\begin{equation}
\label{GUH}
(G \times U) \Gamma = G \times H.
\end{equation}
We set $L = (\{e\} \times U)K$ and note that $L$ is again a compact subset of $Y$. Since $K \subset L$, we clearly have
the inclusion $Y_L \subset Y_K$. Furthermore, if $f \in C(K)$ and $h \in U$, then $f_h(y) := f((e,h)y)$ belongs to $C(L)$, so 
if $y \in Y_L$, then 
\[
(\beta_t * f)((e,h)y) = (\beta_t * f_h)(y) \ra \int_Y f_h \, dm_Y = \int_Y f \, dm_Y.
\]
In other words, if $y \in Y_L$, then $(e,h)y \in Y_K$ for all $h \in U$, or equivalently, 
\[
(\{e\} \times U)Y_L \subset Y_K.
\]
Since $Y_L$ is $m_Y$-conull and $\Gamma$ is countable, we see that $Y'_L := \bigcap_{\gamma \in \Gamma} \gamma Y_L$ is again a $m_Y$-conull set. Furthermore, 
\[
(\{e\} \times U)Y'_L = (\{e\} \times U) \Gamma Y_L' \subset (\{e\} \times U) Y_L \subset Y_K.
\]
Since $Y_K$ is $G$-invariant, we see that
\[
(G \times \{e\}) (\{e\} \times U)Y'_L = (G \times U) \Gamma Y_L' \subset (G \times \{e\})Y_K = Y_K.
\]
By \eqref{GUH}, we conclude that $(G \times U)\Gamma Y' = (G \times H)Y' = Y$,  whence $Y_K = Y$.
\end{proof}

\begin{lemma} 
\label{lemma:genericity}
Suppose that $(F_t)$ is a sequence of compact subsets in $G$ of positive measures
which is everywhere good for $(Y,m_Y)$. Then $C_c(G)$ is generic with respect to 
$\nu$ and $(F_t)$. 
\end{lemma}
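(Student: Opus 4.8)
The plan is to unwind the definition of genericity and reduce the claimed limit to the everywhere-goodness hypothesis, applied at the single base point $y_0 = (e,e)\Gamma \in Y$. By Proposition \ref{PropAutocor} (the polarized form of \eqref{Autocor}) we have $\eta_\nu(f_1^* \ast f_2) = \langle \mathcal{P}f_1, \mathcal{P}f_2\rangle_{L^2(X^\times,\nu)}$ for all $f_1,f_2 \in C_c(G)$, so to prove that $C_c(G)$ is generic it suffices to identify
\[
\lim_{t\to\infty}\frac{1}{m_G(F_t)}\int_{F_t} \overline{\mathcal{P}f_1(s^{-1}.P_0)}\,\mathcal{P}f_2(s^{-1}.P_0)\,dm_G(s)
\]
with that inner product, for arbitrary $f_1,f_2 \in C_c(G)$.

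First I would set $F_i := \mathcal{P}_\Gamma(f_i \otimes \chi_{W_0})$ and record that, because $f_i \otimes \chi_{W_0}$ is a bounded compactly supported $m_{G\times H}$-Riemann-integrable function on $G\times H$, its $\Gamma$-periodization $F_i$ is a bounded, compactly supported, $m_Y$-Riemann-integrable function on $Y$ (the support of $F_i$ is the image in $Y$ of the compact support of $f_i\otimes\chi_{W_0}$). Hence the product $\Phi := \overline{F_1}\,F_2$ lies in the same class, and the everywhere-goodness hypothesis will apply to $\Phi$ by the approximation remark in the definition.

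The key pointwise step is to show that $\mathcal{P}f_i(s^{-1}.P_0) = F_i(s^{-1}.y_0)$ for \emph{every} $s\in G$. By $G$-equivariance of $\beta$ we have $\beta(s^{-1}.P_0) = s^{-1}.\beta(P_0) = (s^{-1},e)\Gamma$, and since the $H$-component of this representative is $e$ and $W_0$ is $\Gamma$-regular, the point $(s^{-1},e)\Gamma$ lies in $Y^{\rm ns}$; therefore $s^{-1}.P_0 \in X^{\rm ns} = \beta^{-1}(Y^{\rm ns})$ and Lemma \ref{LemmaSiegelFormula} yields the identity. I expect this verification to be the main obstacle, or rather the conceptual crux: it is precisely here that \emph{everywhere}, rather than merely almost everywhere, goodness is indispensable, since the $G$-orbit of $y_0$ is an $m_Y$-null set and evaluating the ergodic averages at $y_0$ itself is not covered by any a.e.\ statement.

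With this identity, the integral above becomes
\[
\frac{1}{m_G(F_t)}\int_{F_t}\Phi(s^{-1}.y_0)\,dm_G(s) = (\beta_t \ast \Phi)(y_0),
\]
so everywhere-goodness gives $(\beta_t \ast \Phi)(y_0) \to \int_Y \Phi\,dm_Y = \langle F_1, F_2\rangle_{L^2(Y)}$. Finally, combining \eqref{betaprop} (which gives $\mathcal{P}f_i = \beta^* F_i$ $\nu$-a.e.) with $\beta_*\nu = m_Y$ yields $\langle F_1, F_2\rangle_{L^2(Y)} = \langle \mathcal{P}f_1, \mathcal{P}f_2\rangle_{L^2(X^\times,\nu)} = \eta_\nu(f_1^*\ast f_2)$, which closes the argument. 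The remaining points — compact support and Riemann-integrability of $\Phi$, and the matching of convolution conventions — are routine.
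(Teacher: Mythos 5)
Your proposal is correct and follows essentially the same route as the paper's own proof: both define the same compactly supported $m_Y$-Riemann-integrable function $u=\overline{F_1}F_2$ on $Y$, identify $\int_Y u\,dm_Y$ with $\langle \mathcal{P}f_1,\mathcal{P}f_2\rangle_{L^2(X^\times,\nu)}=\eta_\nu(f_1^*\ast f_2)$ via \eqref{betaprop} and $\beta_*\nu=m_Y$, and then apply everywhere-goodness at the single point $y_0$. Your explicit verification that $s^{-1}.P_0\in X^{\rm ns}$ for \emph{every} $s\in G$ (via $\Gamma$-regularity of $W_0$ and equivariance of $\beta$), so that Lemma \ref{LemmaSiegelFormula} gives the pointwise identity $\mathcal{P}f_i(s^{-1}.P_0)=F_i(s^{-1}.y_0)$, is a detail the paper leaves implicit in its final chain of equalities, and you correctly identify it as the place where everywhere (rather than almost everywhere) goodness is indispensable.
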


\begin{proof}
Given $f_1, f_2 \in C_c(G)$, we define the compactly supported $m_Y$-Riemann integrable function $u : Y \to \mathbb{C}$ by
\begin{eqnarray*}
u(y) :=  \overline{\mathcal P_\Gamma(f_1 \otimes \chi_{W_0})(y)} \mathcal P_\Gamma(f_2 \otimes \chi_{W_0})(y).
\end{eqnarray*}
By \eqref{betaprop} above, we have
\[
\int_Y u \, dm_Y = \int_{X^{\times}} \overline{\cP f_1} \cP f_2 \, d\nu = \langle \cP f_1, \cP f_2 \rangle_{L^2(X^{\times},\nu)}.
\]
Since $(F_t)$ is everywhere good with respect to $m_Y$ and $h$ is $m_Y$-Riemann integrable, we have
\[
\int_Y u \,dm_Y  = \lim_{t \to \infty} \int_G u(g^{-1} y) \, d\beta_t(g), \quad \textrm{for every $y \in Y$}.
\]
If we combine these two observations (with $y = y_o$), we get
\begin{eqnarray*}
\eta_\nu(f_1^* * f_2) &=& \langle \mathcal{P}f_1, \mathcal{P}f_2\rangle_{L^2(X^{\times},\nu)}  = \int_Y u \,dm_Y\\
&=&   \lim_{t \to \infty} \int_G u(g^{-1} y_0) \, d\beta_t(g)\\
&=& \lim_{t \to \infty}  \frac{1}{m_G(F_t)} \; \int_{F_t}  u(s^{-1} y_0) dm_G(s)\\
&=&  \lim_{t \to \infty}  \frac{1}{m_G(F_t)} \; \int_{F_t}   \overline{\mathcal P_\Gamma(f_1 \otimes \chi_{W_0})(s^{-1}.y_0)} \mathcal P_\Gamma(f_2 \otimes \chi_{W_0})(s^{-1}.y_0)dm_G(s)\\
&=& \lim_{t \to \infty} \frac{1}{m_G(F_t)} \int_{F_t} \overline{\mathcal{P} f_1(s^{-1} \cdot P_0)} \mathcal{P} f_2(s^{-1} \cdot P_0) \, dm_G(s).
\end{eqnarray*}
\end{proof}
Now Theorem \ref{mainapprox} follows from Lemma \ref{everywhere}, Lemma \ref{lemma:genericity} and Theorem \ref{approxthm1}.

\subsection{The proof of the abstract approximation theorem}\label{SubsecProofApprox}
The proof of Theorem \ref{approxthm1} is based on the following observations:
\begin{lemma}
\label{approxlemma}
Suppose that $\psi$ is a non-negative left uniformly continuous function on $G$.
Then, for every $\eps > 0$, there exists $\delta > 0$, such that
\[
\sum_{x \in P_0 \cap L_{-\delta}} \psi(x) - \eps | P_0 \cap L_\delta|
\leq 
\int_L (\mathcal P \rho)(s^{-1} P_0) \psi(s) dm_G(s)
\leq 
\sum_{x \in P_0 \cap L_\delta} \psi(x) + \eps | P_0 \cap L_\delta|
\]
for every compact set $L \subset G$ and non-negative $\rho \in C_c(G)$ with $\supp \rho \subset B_\delta$ and $\int_G \rho \, dm_G = 1$.
\end{lemma}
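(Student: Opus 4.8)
The plan is to expand the periodization, interchange the (locally finite) sum with the integral, and reduce the claim to a term-by-term estimate. Writing $(\mathcal P\rho)(s^{-1}P_0) = \sum_{x \in P_0}\rho(s^{-1}x)$ and observing that for $s$ ranging over the compact set $L$ only finitely many $x \in P_0$ satisfy $s^{-1}x\in\supp\rho$ (since $P_0$ is locally finite and $L\cdot\supp\rho$ is relatively compact, so the index set $P_0\cap \overline{L\,\supp\rho}$ is finite and independent of $s$), Tonelli's theorem gives
\[
\int_L(\mathcal P\rho)(s^{-1}P_0)\,\psi(s)\,dm_G(s) = \sum_{x\in P_0}\int_L \rho(s^{-1}x)\,\psi(s)\,dm_G(s),
\]
a finite sum. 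It thus suffices to control each $I_x := \int_L \rho(s^{-1}x)\psi(s)\,dm_G(s)$ together with the weight $c_x := \int_L \rho(s^{-1}x)\,dm_G(s)$.

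First I would fix $\delta$ once and for all, depending only on $\eps$: by left uniform continuity of $\psi$ with respect to the admissible (left-invariant) metric $d$, choose $\delta>0$ so that $d(s,x)<\delta$ implies $|\psi(s)-\psi(x)|<\eps$. This choice is \emph{independent of $L$ and of $\rho$}, which is exactly what the statement demands. Next, unimodularity of $G$ renders the substitution $t=s^{-1}x$ (that is, $s=xt^{-1}$) measure-preserving, so that $\int_G\rho(s^{-1}x)\,dm_G(s)=\int_G\rho\,dm_G=1$; hence $0\le c_x\le 1$ for every $x$. The integrand $s\mapsto\rho(s^{-1}x)$ is supported where $s^{-1}x\in B_\delta$, which by left-invariance of $d$ and symmetry of $B_\delta$ (coming from $d(e,g^{-1})=d(e,g)$) is precisely the set $xB_\delta=\{s:d(s,x)<\delta\}$. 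The geometric identities $xB_\delta\subset L\Leftrightarrow x\in L_{-\delta}$ and $xB_\delta\cap L\neq\emptyset\Leftrightarrow x\in L_\delta$ then yield $c_x=1$ for $x\in P_0\cap L_{-\delta}$ and $c_x=0$ for $x\notin L_\delta$.

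Finally I would assemble the two-sided bound. On the support of $I_x$ one has $d(s,x)<\delta$, hence $\psi(x)-\eps\le\psi(s)\le\psi(x)+\eps$; multiplying by the non-negative weight $\rho(s^{-1}x)$ and integrating gives $(\psi(x)-\eps)c_x\le I_x\le(\psi(x)+\eps)c_x$. Summing the upper estimate and using $c_x\le 1$ together with $c_x=0$ off $L_\delta$ produces
\[
\sum_{x\in P_0} I_x\;\le\;\sum_{x\in P_0\cap L_\delta}\psi(x)+\eps\,|P_0\cap L_\delta|.
\]
For the lower estimate I would write $\sum_x(\psi(x)-\eps)c_x=\sum_x\psi(x)c_x-\eps\sum_x c_x$; dropping the non-negative terms with $x\notin L_{-\delta}$ and using $c_x=1$ on $L_{-\delta}$ bounds the first sum below by $\sum_{x\in P_0\cap L_{-\delta}}\psi(x)$ (this is where $\psi\ge 0$ is essential), while $c_x\le 1$ and $c_x=0$ off $L_\delta$ bound the second sum above by $|P_0\cap L_\delta|$. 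This gives the matching lower inequality, completing the proof.

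The hard part is not any single computation but the geometric bookkeeping linking $\supp\rho\subset B_\delta$ to the inner and outer enlargements $L_{-\delta}$ and $L_\delta$: the boundary points $x$, for which $xB_\delta$ straddles $\partial L$ and $c_x\in(0,1)$, are exactly what is absorbed into the error term $\eps\,|P_0\cap L_\delta|$. I would therefore take care to verify the symmetry $B_\delta=B_\delta^{-1}$ and the equivalence $x\in L_{-\delta}\Leftrightarrow xB_\delta\subset L$ cleanly, since these are what allow $\delta$ to be chosen uniformly in $L$ and $\rho$.
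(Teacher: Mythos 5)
Your proof is correct and follows essentially the same route as the paper's: both expand $\mathcal P\rho(s^{-1}P_0)$ into a sum over $P_0$, choose $\delta$ by left uniform continuity so that $s^{-1}x\in B_\delta$ forces $|\psi(s)-\psi(x)|<\eps$, and then exploit the same three facts about the weights $c_x=\int_L\rho(s^{-1}x)\,dm_G(s)$ (namely $c_x\le 1$, $c_x=0$ off $L_\delta$, and $c_x=1$ on $L_{-\delta}$). Your sandwich $(\psi(x)-\eps)c_x\le I_x\le(\psi(x)+\eps)c_x$ is just a minor algebraic repackaging of the paper's add-and-subtract of $\psi(x)$ inside the integral, so there is nothing substantively different to compare.
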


\begin{lemma}
\label{PofiniteinFt}
For every $\delta > 0$, there exists a constant $M_\delta$ such that
\[
M_\delta \geq \frac{|P_0 \cap L_{-\delta} |}{m_G(L)} \quad \textrm{for every compact subset $L \subset G$}.
\]
\end{lemma}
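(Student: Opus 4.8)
The plan is to establish the equivalent (and slightly cleaner) inequality $|P_0 \cap L_{-\delta}| \le M_\delta\, m_G(L)$ for every compact $L \subseteq G$, which implies the stated bound and also covers the degenerate case $m_G(L)=0$. The mechanism is a volume-packing argument: attach to each point of $P_0 \cap L_{-\delta}$ a fixed-volume neighbourhood, show these neighbourhoods are pairwise disjoint and all sit inside $L$, and compare Haar measures.

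First I would extract uniform discreteness from finite local complexity. Since $P_0$ has FLC, the set $P_0^{-1}P_0$ is locally finite by the implications in \eqref{Discreteness}, so $e$ is isolated in $P_0^{-1}P_0$ and there is an identity neighbourhood $N$ with $N \cap P_0^{-1}P_0 = \{e\}$. Choosing, in the standard way, a symmetric open identity neighbourhood $U_0$ with $U_0U_0 \subseteq N$, the left translates $\{pU_0 : p \in P_0\}$ are pairwise disjoint: if $pu = qu'$ with $u,u' \in U_0$ and $p,q \in P_0$, then $q^{-1}p \in U_0U_0 \cap P_0^{-1}P_0 = \{e\}$, forcing $p=q$.

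Next I would calibrate this separating neighbourhood to the scale $\delta$. Put $U := U_0 \cap B_\delta^{-1}$; this is again an open identity neighbourhood, hence $m_G(U) > 0$, and its translates $\{pU\}_{p \in P_0}$ stay pairwise disjoint as subsets of the $\{pU_0\}$. The geometric point is that membership $p \in L_{-\delta} = \bigcap_{t \in B_\delta} Lt$ unwinds to $pt^{-1} \in L$ for all $t \in B_\delta$, i.e. $pB_\delta^{-1} \subseteq L$; consequently $pU \subseteq pB_\delta^{-1} \subseteq L$ for every $p \in P_0 \cap L_{-\delta}$. Comparing volumes, the disjoint union $\bigsqcup_{p \in P_0 \cap L_{-\delta}} pU$ lies in $L$ and each piece has measure $m_G(U)$ by left-invariance, so $|P_0 \cap L_{-\delta}|\, m_G(U) \le m_G(L)$. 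Thus $M_\delta := m_G(U)^{-1}$ depends only on $\delta$ (and $P_0$), not on $L$, and the proof is complete.

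I expect the only real subtlety to be the bookkeeping in the calibration step: one needs a single neighbourhood that simultaneously separates the points of $P_0$ (ensuring disjointness of the translates) and lies inside $B_\delta^{-1}$ (ensuring the translates are captured by $L$ once $p \in L_{-\delta}$). Since both are open conditions at the identity, intersecting $U_0$ with $B_\delta^{-1}$ settles them at once, and the remainder is a routine packing estimate. It is also worth recording, as a by-product, that the same inequality shows $P_0 \cap L_{-\delta}$ is finite for compact $L$, consistent with the cardinality notation.
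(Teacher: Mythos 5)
Your proof is correct, and it takes a genuinely different route from the paper's. The paper deduces the lemma from the periodization machinery of Section~\ref{SecAutocorrelation}: it fixes a non-negative $\rho \in C_c(G)$ with $\supp \rho \subset B_\delta$ and $\int_G \rho \, dm_G = 1$, sets $M_\delta := \|\mathcal P \rho\|_\infty$, which is finite by Proposition~\ref{PropSiegel}, and then uses that $x \in L_{-\delta}$ implies $L^{-1}x \supset B_\delta$, hence $\int_L \rho(s^{-1}x)\, dm_G(s) = 1$; averaging $\mathcal P\rho(s^{-1}P_0)$ over $L$ then gives $M_\delta \geq |P_0 \cap L_{-\delta}|/m_G(L)$ directly. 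You instead run a self-contained volume-packing argument: FLC gives uniform discreteness of $P_0$ via \eqref{Discreteness}, the separating neighbourhood $U_0$ is intersected with $B_\delta^{-1}$ so that the translates $pU$, $p \in P_0 \cap L_{-\delta}$, are pairwise disjoint and contained in $L$, and left-invariance of $m_G$ yields $|P_0 \cap L_{-\delta}|\, m_G(U) \leq m_G(L)$. Your calibration step is sound (the unwinding $p \in L_{-\delta} \Leftrightarrow pB_\delta^{-1} \subset L$ matches the paper's $L^{-1}x \supset B_\delta$), and the disjointness computation via $U_0U_0 \subset N$ is correct. What your approach buys: it avoids any appeal to the hull or to continuity of the periodization map, produces the explicit constant $M_\delta = m_G(U_0 \cap B_\delta^{-1})^{-1}$, and, by proving the multiplied-out inequality, it also covers the degenerate case $m_G(L) = 0$ (where indeed $P_0 \cap L_{-\delta} = \emptyset$, since any $p \in L_{-\delta}$ would force the open set $pB_\delta^{-1}$ inside $L$). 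What the paper's approach buys: it recycles the already-established boundedness of $\mathcal P$ and is structurally parallel to Lemma~\ref{approxlemma}, which manipulates the same integrals $\int_L \rho(s^{-1}x)\, dm_G(s)$; the two are then used in tandem in the proof of Theorem~\ref{approxthm1}. Ultimately both proofs rest on the same combinatorial fact --- uniform discreteness / uniform local finiteness of FLC sets --- but yours exposes it directly, while the paper accesses it through the hull.
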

Applying this to weakly admissible sequences we obtain:
\begin{corollary}
\label{corPofiniteinFt}
For every weakly admissible sequence $(F_t)$, we have
\[
\sup_t \frac{|P_0 \cap F_t|}{m_G(F_t)} < \infty.
\]
\end{corollary}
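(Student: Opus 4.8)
The plan is to reduce the statement to Lemma~\ref{PofiniteinFt} by applying that lemma not to $F_t$ itself but to the slightly inflated set $(F_t)_\delta$, and then to absorb the inflation using the two defining properties of a weakly admissible sequence. I would fix once and for all a single $\delta \in (0,1)$ small enough that $\alpha(\delta) < 1$; such a $\delta$ exists because $\alpha$ is continuous with $\alpha(0)=0$. All constants below will depend only on this fixed $\delta$ and on $P_0$, never on $t$.

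The first and only delicate point is the set-theoretic inclusion
\[
F_t \subseteq \big((F_t)_\delta\big)_{-\delta} \qquad (t > 0).
\]
To establish it, note that since the admissible metric is left-invariant, the ball $B_\delta$ about the identity is symmetric, i.e.\ $B_\delta = B_\delta^{-1}$. Hence for $x \in F_t$ and any $s \in B_\delta$ we may write $xs^{-1} = x \cdot s^{-1} \in F_t B_\delta = (F_t)_\delta$, so that $x \in (F_t)_\delta\, s$. As $s \in B_\delta$ was arbitrary, $x \in \bigcap_{s \in B_\delta} (F_t)_\delta\, s = ((F_t)_\delta)_{-\delta}$, which is the claimed inclusion.

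Granting this, I would finish as follows. Applying Lemma~\ref{PofiniteinFt} with $L = (F_t)_\delta$ and using the inclusion above gives
\[
|P_0 \cap F_t| \;\leq\; \big|P_0 \cap \big((F_t)_\delta\big)_{-\delta}\big| \;\leq\; M_\delta \cdot m_G\big((F_t)_\delta\big).
\]
It then remains to compare $m_G((F_t)_\delta)$ with $m_G(F_t)$, and this is exactly what weak admissibility provides: by Definition~\ref{DefWeaklyAdmissible} we have $(F_t)_\delta \subseteq F_{t + \alpha(\delta)}$, and since $\alpha(\delta) < 1$ the second condition yields $m_G(F_{t+\alpha(\delta)}) \leq (1 + \beta(\alpha(\delta)))\, m_G(F_t)$. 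Combining the three displayed estimates,
\[
\frac{|P_0 \cap F_t|}{m_G(F_t)} \;\leq\; M_\delta\,\big(1 + \beta(\alpha(\delta))\big)
\]
for every $t > 0$, and the right-hand side is a finite constant independent of $t$. Taking the supremum over $t$ proves the corollary. The main obstacle is precisely the inclusion $F_t \subseteq ((F_t)_\delta)_{-\delta}$: once the symmetry of $B_\delta$ is exploited to establish it, the remaining steps are direct substitutions into Lemma~\ref{PofiniteinFt} and Definition~\ref{DefWeaklyAdmissible}.
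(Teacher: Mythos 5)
Your proof is correct and is essentially the argument the paper intends: the paper states the corollary as an immediate consequence of Lemma \ref{PofiniteinFt}, and your inflation-by-$\delta$ inclusion combined with the two defining properties of weak admissibility is exactly that deduction, with the genuinely non-obvious step (the inclusion $F_t \subseteq ((F_t)_\delta)_{-\delta}$ via symmetry of $B_\delta$) carried out carefully. One minor technical remark: $(F_t)_\delta = F_t B_\delta$ is open, hence in general not compact, so Lemma \ref{PofiniteinFt} as stated does not literally apply to $L = (F_t)_\delta$; this is harmless, because $(F_t)_\delta \subset F_{t+\alpha(\delta)}$ and $L \mapsto L_{-\delta}$ is monotone, so your inclusion gives $F_t \subseteq \big(F_{t+\alpha(\delta)}\big)_{-\delta}$ and you can apply the lemma to the compact set $L = F_{t+\alpha(\delta)}$, obtaining the same uniform bound $M_\delta\big(1+\beta(\alpha(\delta))\big)$.
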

Let us first explain how Lemma \ref{approxlemma} and Corollary \ref{corPofiniteinFt} imply Theorem \ref{approxthm1}.

\begin{proof}[Proof of Theorem \ref{approxthm1}]
Fix $f \in \cA$, which we may assume is non-negative, and note that $\psi(s) = \mathcal P f(s^{-1} \cdot P_0)$ is 
non-negative and left-uniformly continuous on $G$. Let $(F_t)$ be a weakly admissible sequence of subsets 
in $G$ with associated parameters $(\alpha,\beta)$. We wish to prove that 
\[
\eta_{\nu}(f) = \lim_{t \to \infty} \frac{1}{m_G(F_t)} \sum_{x \in P_0 \cap F_t} \psi(x).
\]
Fix $\eps > 0$ and choose $\delta > 0$ as in Lemma \ref{approxlemma} so that for every non-negative continuous function $\rho$ supported on $B_\delta$ with $\int_G \rho \, dm_G = 1$ and every $t \in \R$ we have
\[
\sum_{x \in P_0 \cap (F_t)_{-\delta}} \psi(x) - \eps | P_0 \cap (F_t)_\delta|
\leq 
\int_{F_t} (\mathcal P \rho)(s^{-1} P_0) \psi(s) dm_G(s)
\leq 
\sum_{x \in P_0 \cap (F_t)_\delta} \psi(x) + \eps | P_0 \cap (F_t)_\delta|.
\]
If we define
\[
\Xi_\rho(t) = \frac{1}{m_G(F_t)} \int_{F_t} (\mathcal P \rho)(s^{-1} P_0) \psi(s) dm_G(s),
\]
then, since $\cA$ is assumed to be generic with respect to $\nu$ and $(F_t)$, we have
\[
\eta_{\nu}(\rho^* * f) = \lim_{t \to \infty} \Xi_\rho(t). 
\]
Since $(F_t)$ is weakly admissible, we have for all $ t>0$,
\[
F_{t-\alpha(\delta)} \subset (F_t)_{-\delta}  \qand (F_t)_\delta \subset F_{t + \alpha(\delta)},
\]
and 
\[
\frac{m_G(F_{t-\alpha(\delta)})}{m_G(F_t)} \geq \frac{1}{1 + \beta(\delta)}
\qand
\frac{m_G(F_{t+\alpha(\delta)})}{m_G(F_t)} \leq 1 + \beta(\delta),
\]
Moreover, by Corollary \ref{corPofiniteinFt}, 
\[
M := \sup_t \frac{|P_0 \cap F_t|}{m_G(F_t)} < \infty.
\]
Hence, if we define 
\[
\Psi(t) = \frac{1}{m_G(F_t)} \sum_{x \in P_0 \cap F_t} \psi(x), 
\quad
\Psi_{-} = \varliminf_t \Psi(t) \qand \Psi_{+} = \varlimsup_t \Psi(t),
\]
then it follows that for all $t > 0$
\[
\frac{1}{1 + \beta(\delta)} \Psi(t-\alpha(\delta)) - \eps(1 + \beta(\delta)) M
\quad \leq \quad
\Xi_\rho(t) 
\quad \leq \quad
(1 + \beta(\delta))( \Psi(t) + \eps M),
\]
and thus in particular
\[
\frac{1}{1 + \beta(\delta)} \Psi_{+} - \eps (1 + \beta(\delta)) M 
\quad \leq \quad
\eta_{\nu}(\rho^* * f)
\quad \leq \quad  (1 + \beta(\delta)) (\Psi_{-} + \eps M).
\]
Note that these estimates are uniform in $\eps$. We may now choose a decreasing sequence $(\eps_n)$ which converges
to zero, and pick $\delta_n$ and $\rho_n$ correspondingly. Since $f$ has compact support and $\eta_{\nu}$ is finite on 
compact subsets of $G$, we have 
\[
\lim_{n\to \infty} \eta_{\nu}(\rho_n^* * f) = \eta_{\nu}(f),
\]
and thus, since $\beta$ is continuous and $\beta(0) = 0$, we have 
\[
\Psi_{+} \leq \eta_{\nu}(f) \leq \Psi_{-}.
\]
This shows that $\Psi_{+} = \Psi_{-} = \eta_{\nu}(f)$, and thus finishes the proof.
\end{proof}
We now turn to the proofs of the lemmas.
\begin{proof}[Proof of Lemma \ref{approxlemma}]
Let $\psi$ be a left uniformly continuous function on $G$. Fix $\eps > 0$ and choose $\delta > 0$ such that
\begin{equation}
\label{unifcntbnd}
|\psi(s) - \psi(x)| < \eps, \quad \textrm{for all $s,x \in G$ such that $s^{-1}x \in B_\delta$}.
\end{equation}
Let $\rho$ be a non-negative continuous function on $G$ supported on $B_\delta$ with $\int_G \rho \,dm_G = 1$ and $L \subset G$ be a compact set. Note firstly that if $s^{-1}x \in B_\delta$, $s \in L$, $x \in P_0$ and then $x \in P_0 \cap L_\delta$. This implies that 
\begin{eqnarray*}
\int_L \mathcal P \rho(s^{-1} \cdot P_0)  \psi(s) \, dm_G(s)
&=&
\sum_{x \in P_0} \int_L \rho(s^{-1}x) \psi(s) \, dm_G(s) \\
&=& 
\sum_{x \in P_0 \cap L_\delta} \int_L \rho(s^{-1}x) (\psi(s) - \psi(x)) \, dm_G(s) \\
&+&
\sum_{x \in P_0 \cap L_\delta} \Big( \int_L \rho(s^{-1}x) \, dm_G(s) \Big) \, \psi(x).
\end{eqnarray*}
By the relation between $\psi$ and $B_\delta$ described in \eqref{unifcntbnd}, and by the bound 
$\int_L \rho(s^{-1}x) \, dm_G(s) \leq 1$ for all $x \in G$, we see that
\[
\int_L \mathcal P \rho(s^{-1} \cdot P_0)  \psi(s) \, dm_G(s)
\leq 
\eps |P_0 \cap L_\delta| + \sum_{x \in P_0 \cap L_\delta} \psi(x),
\]
which finishes the proof of the upper bound. Concerning the lower bound, we observe that if $x \in L_{-\delta}$, then  $L^{-1}x \supset B_\delta$ and thus
\begin{equation*}
\int_L \rho(s^{-1}x) \, dm_G(s) = 1 \quad \textrm{for all } x \in L_{-\delta}.
\end{equation*}
Combining this with \eqref{unifcntbnd} we conclude that
\begin{eqnarray*}
\int_L \mathcal P \rho(s^{-1} \cdot P_0)  \psi(s) \, dm_G(s)
&\geq & 
\sum_{x \in P_0 \cap L_{\delta}}  \Big( \int_L \rho(s^{-1}x) \, dm_G(s) \Big) \, \psi(x) \\
&-& 
\sum_{x \in P_0 \cap L_\delta} \int_L \rho(s^{-1}x) |\psi(s) - \psi(x)| \, dm_G(s) \\
&\geq &
 \sum_{x \in P_0 \cap L_{-\delta}}  \Big( \int_L \rho(s^{-1}x) \, dm_G(s) \Big) \, \psi(x) - \eps |P_0 \cap L_\delta| \\
&=&
\sum_{x \in P_0 \cap L_{-\delta}}  \psi(x) - \eps |P_0 \cap L_\delta|,
\end{eqnarray*}
which is the desired lower bound.
\end{proof}

\begin{proof}[Proof of Lemma \ref{PofiniteinFt}] Fix $\delta > 0$ and choose a non-negative continuous function $\rho$ on $G$ supported on $B_\delta$ with 
$\int_G \rho \, dm_G = 1$. We recall from Proposition \ref{PropSiegel} that $M_\delta := \|\mathcal P \rho\|_\infty < \infty$. Now let $L \subset G$ be a compact set and note that for all $x \in L_{-\delta}$, we have $L^{-1}x \supset B_\delta$, and 
thus $\int_L \rho(s^{-1}x) \, dm_G(s) = 1$. We conclude that
\begin{eqnarray*}
M_\delta &\geq & \frac{1}{m_G(L)}  
\int_L \mathcal P \rho(s^{-1} P_0) \, dm_G(s) \quad = \quad 
\frac{1}{m_G(L)} \sum_{x \in P_0} \int_L \rho(s^{-1}x) \, dm_G(s) \\
&\geq &
\frac{1}{m_G(L)} \sum_{x \in P_0 \cap L_{-\delta}} \int_L \rho(s^{-1}x) \, dm_G(s)  \quad = \quad 
\frac{|P_0 \cap L_{-\delta}|}{m_G(L)}.\qedhere
\end{eqnarray*}
\end{proof}
This completes the proof of Theorem \ref{approxthm1}.

\appendix

\section{The Chabauty--Fell topology and the local topology}\label{AppendixLocalTopology}

The goal of this appendix is to compare different topologies on the collection $\mathcal C(G)$ of closed subsets of a lcsc group $G$, and to discuss orbit closures of FLC sets in these topologies. All the results presented in this appendix are well-known in the abelian case \cite{Moody-97, Schlottmann-99, BaakeL-04} and the generalizations to non-abelian groups discussed here are entirely routine. In the non-abelian case, the only treatment we are aware of is \cite{Yokonuma2005}, which however focuses on different aspects. 

\subsection{A basis for the Chabauty--Fell topology of a lcsc group}
We start by discussing different bases for the \emph{Chabauty--Fell topology} on $\mathcal C(G)$. In Subsection \ref{SubsecHull} we have defined this topology by means of the basic open subsets $U_V, U^K \subset \mathcal C(G)$ given by
\[
U_V = \{C \in \mathcal C(G)\mid C \cap V \neq \emptyset\} \quad \text{and} \quad U^K = \{C \in \mathcal C(G)\mid C \cap K = \emptyset\},
\] 
where $V$ runs over the open subsets of $G$ and $K$ runs over the compact subsets of $G$. In the sequel will prefer to work with a different basis, which is defined as follows.

Given $P \in \mathcal C(G)$, $K \in \mathcal K(G)$ and $V \in \mathfrak U(G)$ we define
\[
\widehat{U}_{K,V}(P) := \{Q \in \mathcal C(G)\mid Q \cap K \subset VP \text{ and } P \cap K \subset VQ\}.
\]
\begin{proposition}\label{LMT} The sets $\{\widehat{U}_{K,V}(P)\mid{K \in \mathcal K(G), V \in \mathfrak U(G)}\}$ generate the neighbourhood filter of $P$ in the Chabauty--Fell topology.
\end{proposition}
\begin{proof} Denote by $\tau$ the topology with neighbourhood filters given by the $\{\widehat{U}_{K,V}(P)\}$. We first show that every non-empty Chabauty--Fell open set $U$ contains a non-empty $\tau$-open subset. We may assume that $U$ is of the form 
\[
U = U^K \cap \bigcap_{i=1}^n U_{V_i},
\]
$V_i \in \mathcal O(G)$ for all $i \in \{1, \dots, n\}$ and $K \in \mathcal K(G)$. Since $U \neq \emptyset$ we have $V_i \setminus K \neq \emptyset$ for all $i \in I$ and hence we find $x_1,\dots, x_n \in G$ and $W=W^{-1} \in \mathfrak U(G)$ such that 
\[
\overline{W}x_i \subset V_i \setminus K.
\]
Let $P:= \{x_1, \dots, x_n\}$ and $K' := K \cup\overline{W}P$ and note that the latter union is disjoint. We claim that $\widehat{U}_{K', W}(P) \subset U$. Indeed, let $Q \in \widehat{U}_{K', W}(P)$ so that
\[Q \cap K' \subset WP\quad \text{ and } \quad P \cap K' \subset WQ.\]
Then $Q \cap K' \subset \overline{W}P  = K' \setminus K$ and thus $Q \cap K = \emptyset$, i.e. $Q \in U^K$. Given $i \in \{1, \dots, n\}$ we have $x_i \in P =P \cap K' \subset WQ$ and hence $Wx_i \cap Q = W^{-1}x_i \cap Q \neq \emptyset$. We deduce that $Q \cap V_i \supset Q \cap Wx_i \neq \emptyset$ and thus $Q \in U_{V_i}$. This shows that $Q \in U$ and shows that $\tau$ is finer than the Chabauty--Fell topology.

Conversely let $P \in \mathcal C(G)$, $K \in \mathcal K(G)$ and $V \in \mathfrak U(G)$. We construct a Chabauty--Fell open subset $U$ of $\widehat{U}_{K, V}(P)$ as follows. Firstly, let $W \in \mathfrak U(G)$ be open and symmetric with $W^2 \subset V$. Secondly, let $K' := K\setminus WP \in \mathcal K(G)$. Since $K \cap P$ is compact there exist $t_1, \dots, t_n \in G$ such that
\begin{equation}\label{KQLMT}
K \cap P \subset \bigcup_{i=1}^n Wt_i \quad \text{and} \quad (K \cap P) \cap Wt_i \neq \emptyset.
\end{equation}
We claim that \[U := U^{K'} \cap \bigcap U_{Wt_i} \subset \widehat{U}_{K, V}(P).\] Indeed, let $Q \in U$. Since $Q \in U^{K'}$ we have $\emptyset = Q \cap K' = Q \cap (K\setminus WP)$, hence $Q \cap K \subset WP \subset VP$. Concerning the other inclusion, note that for $i=1, \dots, n$ we have $Q \cap Wt_i \neq \emptyset$, say $q = wt_i$ with $q \in Q$ and $w \in W$. Then $Wt_i = Ww^{-1}q \in WW^{-1}Q \subset VQ$ and thus $K \cap P \subset VQ$ by \eqref{KQLMT}, finishing the proof.
\end{proof}

In the abelian context, this model for the Chabauty--Fell topology appears in \cite{BaakeL-04}, where it is referred to as the \emph{local rubber topology}. 

\subsection{The local uniformity and the local topology} In this subsection we are going to define a $G$-invariant uniformity on $\mathcal C(G)$ whose associated topology is finer than the Chabauty--Fell topology, but coincides with the Chabauty--Fell topology on the orbit closure of any set of finite local complexity. For every $K\in \mathcal K(G)$ and every $V \in\mathfrak U(G)$ we define a subset $U_{K,V} \subset \mathcal C(G) \times \mathcal C(G)$ by
\begin{equation}\label{UKV}
U_{K,V} := \{(P, Q) \in \mathcal{C}(G)\mid \exists t \in V:\; P \cap K = tQ \cap K\}.
\end{equation}
\begin{proposition}\label{LocalUniformity}
The set $\mathcal B := \{U_{K, V}\mid K\in \mathcal K(G), V \in\mathfrak U(G)\}$ is a fundamental system of entourages for a uniformity on $\mathcal{C}(G)$.
\end{proposition}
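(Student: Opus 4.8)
The plan is to verify that $\mathcal{B}$ satisfies the four standard axioms for a base of a uniformity on $\mathcal{C}(G)$: (U1) every $U_{K,V}$ contains the diagonal; (U2) $\mathcal{B}$ is downward directed, i.e. any two members contain a common third member; (U3) for every $U_{K,V}$ there is a member $W \in \mathcal{B}$ with $W^{-1} \subseteq U_{K,V}$; and (U4) for every $U_{K,V}$ there is a member $W \in \mathcal{B}$ with $W \circ W \subseteq U_{K,V}$. Throughout, the only facts I would use about $G$ are that it is locally compact (so that $e$ admits a precompact symmetric neighbourhood) and that multiplication is continuous at $(e,e)$ (so that for each $V \in \mathfrak{U}(G)$ there is a symmetric $V' \in \mathfrak{U}(G)$ with $(V')^2 \subseteq V$); intersecting with a precompact neighbourhood, such a $V'$ may always be taken precompact.

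Axioms (U1) and (U2) are immediate. For (U1), choosing $t = e \in V$ gives $P \cap K = eP \cap K$, so $(P,P) \in U_{K,V}$ for every $P$. For (U2), given $U_{K_1,V_1}$ and $U_{K_2,V_2}$, I would set $K := K_1 \cup K_2$ (compact) and $V := V_1 \cap V_2 \in \mathfrak{U}(G)$, and observe that $U_{K,V} \subseteq U_{K_1,V_1} \cap U_{K_2,V_2}$: if $P \cap K = tQ \cap K$ with $t \in V$, then intersecting this identity with the smaller set $K_i$ yields $P \cap K_i = tQ \cap K_i$ with $t \in V_i$, since $K_i \subseteq K$.

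The content lies in (U3) and (U4), and the key point in both is that translating a subset by a group element $t$ also translates the compact ``window'' $K$; one compensates for this by enlarging $K$ to $\overline{V'}K$. For (U3) I would choose a symmetric precompact $V' \in \mathfrak{U}(G)$ with $V' \subseteq V$ and set $K' := \overline{V'}K$, claiming $(U_{K',V'})^{-1} \subseteq U_{K,V}$. Indeed, if $(P,Q) \in U_{K',V'}$, say $P \cap K' = tQ \cap K'$ with $t \in V'$, then applying $t^{-1}$ gives $t^{-1}P \cap t^{-1}K' = Q \cap t^{-1}K'$; since $t^{-1} \in V'$ forces $tK \subseteq \overline{V'}K = K'$, i.e. $K \subseteq t^{-1}K'$, intersecting with $K$ collapses this to $t^{-1}P \cap K = Q \cap K$, with $t^{-1} \in V' \subseteq V$. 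Hence $(Q,P) \in U_{K,V}$, which is exactly the claim.

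For (U4), which I regard as the main obstacle, I would choose a symmetric precompact $V' \in \mathfrak{U}(G)$ with $(V')^2 \subseteq V$ and again set $K' := \overline{V'}K$, claiming $U_{K',V'} \circ U_{K',V'} \subseteq U_{K,V}$. Suppose $(P,Q), (Q,R) \in U_{K',V'}$, witnessed by $t_1, t_2 \in V'$ with $P \cap K' = t_1 Q \cap K'$ and $Q \cap K' = t_2 R \cap K'$. Since $K \subseteq K'$, the first identity restricts to $P \cap K = t_1 Q \cap K = t_1(Q \cap t_1^{-1}K)$; because $t_1^{-1} \in V'$ gives $t_1^{-1}K \subseteq \overline{V'}K = K'$, the second identity restricts to $Q \cap t_1^{-1}K = t_2 R \cap t_1^{-1}K$, and substituting yields $P \cap K = t_1 t_2 R \cap K$. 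As $t_1 t_2 \in (V')^2 \subseteq V$, this shows $(P,R) \in U_{K,V}$, completing the verification. The only subtlety to watch is the bookkeeping of which compact set each intersection is taken against; the choice $K' = \overline{V'}K$ is engineered precisely so that every translate $t_i^{-1}K$ appearing in the chain stays inside $K'$.
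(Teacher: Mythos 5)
Your proof is correct and follows essentially the same route as the paper: both verify Bourbaki's axioms (diagonal, directedness, symmetry, and the square condition), with the key device in the last two being to enlarge the compact window (your $\overline{V'}K$ versus the paper's $WK$ with $W$ compact symmetric, $W^2 \subset V$) so that the translates $t_i^{-1}K$ stay inside it. The bookkeeping details differ only cosmetically, so there is nothing to add.
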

\begin{proof}In the notation of \cite[Chapter 2, {\S} 1.1]{Bourbaki} we have to show that
\begin{enumerate}[(B1)]
\item the diagonal  $\Delta(\mathcal{C}(G))$ is contained in every $U \in \mathcal B$;
\item for all $U_1, U_2 \in \mathcal B$ there exists $U_3 \in \mathcal B$ with $U_3 \subset U_1 \cap U_2$;
\item for all $U_1 \in \mathcal B$ there exists $U_2 \in \mathcal B$ such that $U_2 \subset U_1^{-1}$;
\item for all $U_1 \in \mathcal B$ there exists $U_2 \in \mathcal B$ such that $U_2^2 \subset U_1$.
\end{enumerate}
We establish (B1) -- (B4) for our $\mathcal B$ at hand.

(B1) is immediate from the fact that $e \in V$ for every $V \in \mathfrak U(G)$. 

(B2) If $(P,Q) \in U_{K_1 \cup K_2, V_1 \cap V_2}$, then there exists $t \in V_1 \cap V_2$ such that
\[
P \cap  (K_1 \cup K_2)= tQ \cap(K_1 \cup K_2). 
\]
It follows that for $j=1,2$,
\[
P \cap K_j = tQ \cap K_j,
\]
whence $(P, Q) \in U_{K_1, V_1} \cap U_{K_2, V_2}$. This shows that $U_{K_1 \cup K_2, V_1 \cap V_2} \subset U_{K_1, V_1} \cap U_{K_2, V_2}$.

(B3) Let $V \in \mathfrak U(G)$ and $K \in \mathcal K(G)$. There exists a compact $W \in \mathfrak U(G)$ with $W^2 \subset V$ and $W = W^{-1}$. Then $WK$ is compact and if $(P, Q) \in U_{WK, W}$, then there exists $t \in W$ such that
\[
P \cap WK = tQ \cap WK.
\]
By assumption, $s := t^{-1} \in W \subset V$ and $e \in sW$, hence $K \subset sWK$. We obtain
\[
Q \cap K = (Q \cap sWK) \cap K = (sP \cap sWK) \cap K =  sP \cap K,
\]
showing that $(Q, P) \in U_{K, V}$, whence $U_{WK, W} \subset U_{K,V}^{-1}$.

(B4) Let $V, K, W$ as in the proof of (B3) and let $(P,R)\in U_{WK, W}^2$. Then there exist $Q \in \mathcal{D}(G)$ such that $\{(P, Q), (Q, R)\} \subset U_{WK, W}$, i.e. there exist
$t_1, t_2 \in W$ such that
\[
P \cap WK = t_1Q \cap WK, \quad Q \cap WK = t_2R \cap WK.
\]
Let $t := t_1t_2 \in V$. Since $K \subset t_1WK \cap WK$ we then have
\begin{eqnarray*}
tR \cap K &=& (t_1t_2 R \cap t_1WK) \cap K = t_1(t_2R \cap WK)\cap K = t_1(Q \cap WK) \cap K\\
&=& t_1Q \cap K = (t_1Q \cap WK)  \cap K = (P \cap WK) \cap K = P \cap K,
\end{eqnarray*}
hence $(P, R) \in U_{V, K}$, showing that $U_{WK, W}^2\subset U_{K, V}$. 

This establishes (B1) -- (B4) and finishes the proof.
\end{proof}
In the sequel we refer to the uniformity defined in Proposition \ref{LocalUniformity} as the 
\emph{local uniformity} on $\mathcal{C}(G)$. and the corresponding topology as the \emph{local topology}. 
By definition, a neighbourhood basis of $P \in \mathcal C(G)$ in the 
local topology is given by the sets
\begin{eqnarray*}
U_{K,V}(P) &=& \{Q \in \mathcal C(G)\mid (P, Q) \in U_{K,V}\}\\
&=& \{Q \in \mathcal C(G) \mid \exists\, t \in V:\; tQ\cap K = P \cap K\}.
\end{eqnarray*}
where $K$ runs through $\mathcal K(G)$ and $V$ runs through $\mathfrak U(G)$
\begin{lemma}\label{LocalTopologyContinuity} The $G$-action on $\mathcal C(G)$ by left-translations is jointly continuous with respect to the local topology.
\end{lemma}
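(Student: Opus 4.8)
The plan is to verify joint continuity directly at an arbitrary point $(g_0, P_0) \in G \times \mathcal C(G)$, using the explicit neighbourhood basis of the local topology recorded just above. Fix a basic neighbourhood $U_{K,V}(g_0 P_0)$ of the image point $g_0 P_0$, with $K \in \mathcal K(G)$ and $V \in \mathfrak U(G)$; I must produce a neighbourhood $N$ of $g_0$ in $G$ and a basic neighbourhood $U_{K',V'}(P_0)$ of $P_0$ such that $gQ \in U_{K,V}(g_0P_0)$ whenever $g \in N$ and $Q \in U_{K',V'}(P_0)$. Unwinding the definition, membership $gQ \in U_{K,V}(g_0P_0)$ means that there is some $t \in V$ with $tgQ \cap K = g_0 P_0 \cap K$, so the entire proof reduces to exhibiting such a correcting translation $t$ in terms of the data controlling how close $g$ is to $g_0$ and how close $Q$ is to $P_0$.

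The natural first move is to translate the problem by $g_0^{-1}$: set $K' := g_0^{-1}K$, which is again compact. Writing $g = v g_0$ with $v := gg_0^{-1}$ and taking $N := V_0 g_0$ forces $v \in V_0$, for a $V_0 \in \mathfrak U(G)$ to be chosen below. Given $Q \in U_{K',V'}(P_0)$ there is $s \in V'$ with $sQ \cap K' = P_0 \cap K'$. I then propose the correcting element
\[
t := g_0\, s\, g_0^{-1}\, v^{-1},
\]
for which a direct computation gives $tgQ = g_0 s g_0^{-1} v^{-1} v g_0 Q = g_0 (sQ)$, and hence
\[
tgQ \cap K = g_0(sQ \cap g_0^{-1}K) = g_0(sQ \cap K') = g_0(P_0 \cap K') = g_0P_0 \cap K,
\]
which is exactly the identity required for $gQ \in U_{K,V}(g_0P_0)$.

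It remains only to guarantee that $t \in V$, and this is the step I expect to require the most care. First choose a symmetric $V'' \in \mathfrak U(G)$ with $V''V'' \subset V$. Since conjugation by the \emph{fixed} element $g_0$ is a homeomorphism of $G$, the set $g_0^{-1} V'' g_0$ is again an identity neighbourhood, so I may choose $V' \in \mathfrak U(G)$ with $g_0 V' g_0^{-1} \subset V''$, and I take $V_0 \subset V''$ symmetric. Then $g_0 s g_0^{-1} \in V''$ and $v^{-1} \in V_0^{-1} = V_0 \subset V''$, so $t \in V''V'' \subset V$, as needed. I expect this control of the conjugation to be the only genuine obstacle: because $G$ need not admit small conjugation-invariant identity neighbourhoods, one cannot hope to bound a quantity conjugated by the \emph{variable} group element $g$. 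The whole point of writing the correction in the form $g_0 s g_0^{-1}\cdot v^{-1}$ is precisely that only the fixed $g_0$ is ever conjugated, while the variable $v$ enters untwisted. With the neighbourhoods chosen as above the verification is complete, and joint continuity of the left-translation action in the local topology follows.
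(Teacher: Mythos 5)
Your proof is correct and takes essentially the same approach as the paper's: the same choice $K' = g_0^{-1}K$, the same conjugated window neighbourhood $V'$ with $g_0 V' g_0^{-1} \subset V''$, and the same correcting element $t = (g_0 s g_0^{-1})\,v^{-1}$, which is exactly the paper's $t = gsh^{-1} = (gsg^{-1})(gh^{-1})$ upon writing $h = vg_0$. The only cosmetic difference is that you use two nested symmetric neighbourhoods $V_0 \subset V''$ where the paper takes a single symmetric $W$ with $W^2 \subset V$ for both roles.
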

\begin{proof} Let us denote by $m: G \times \mathcal{C}(G) \to  \mathcal{C}(G)$ the left-translation action of $G$, and let $g \in G$, $P \in \mathcal{C}(G)$. We are going to show continuity of $m$ at $(g,P)$. For this let $K\subset G$ be compact and $V \subset G$ be an open identity neighbourhood. We choose a symmetric identity neighbourhood $W$  with $W^2 \subset V$ and define $K' := g^{-1}K$ and $V' := g^{-1}Wg$. Now let $h \in Wg$ and $Q \in U_{K', V'}(P)$. We then find $s \in V'$ such that
$sQ\cap K' = P \cap K'$ and thus
\[
 (gsh^{-1})(hQ) \cap K = gP \cap K.
\]
Since $s \in V'$ we have $gsg^{-1} \in W$ and since also $gh^{-1} \in W$ we obtain 
\[t:=gsh^{-1} = (gsg^{-1})(gh^{-1}) \in W^2 \subset V.\]
To summarize, we have found $t \in V$ such that
\[
t(hQ)\cap K = gP \cap K.
\]
This shows that $hQ \in U_{K, V}(gP)$ and thus $Wg \times U_{K', V'}(P) \subset m^{-1}(U_{K, V}(gP))$, which implies continuity of $m$ at $(g,P)$.
\end{proof}
For the following proposition we denote by $\tau_{CF}$ the Chabauty--Fell topology on $\mathcal C(G)$ and by $\tau_{\rm loc}$ the local topology.
\begin{proposition} The identity map $(\mathcal C(G) , \tau_{\rm loc}) \to (\mathcal C(G), \tau_{CF})$ is continuous, i.e. the local topology is finer than the Chabauty topology (hence in particular Hausdorff).
\end{proposition}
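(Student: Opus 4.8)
The plan is to prove continuity of the identity map by showing directly that $\tau_{\rm loc}$ refines $\tau_{CF}$. Since the map is the identity, continuity is equivalent to the inclusion $\tau_{CF} \subset \tau_{\rm loc}$, and to establish this it suffices to compare the two topologies on the level of neighbourhood bases at each point. Concretely, I would fix $P \in \mathcal C(G)$ and exploit the fact that, by Proposition \ref{LMT}, the sets $\widehat U_{K,V}(P)$ form a neighbourhood basis of $P$ in the Chabauty--Fell topology, while by construction the sets $U_{K',V'}(P)$ form a neighbourhood basis of $P$ in the local topology. Thus the whole statement reduces to the following claim: for every $K \in \mathcal K(G)$ and $V \in \mathfrak U(G)$ there exist $K' \in \mathcal K(G)$ and $V' \in \mathfrak U(G)$ with $U_{K',V'}(P) \subset \widehat U_{K,V}(P)$.

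To prove the claim I would, given $K$ and $V$, choose $V' \in \mathfrak U(G)$ to be symmetric and pre-compact with $V' \subset V$ (possible since $G$ is locally compact), and set $K' := \overline{V'}\,K$, which is compact and contains $K$ because $e \in \overline{V'}$. Now take an arbitrary $Q \in U_{K',V'}(P)$, so that there is some $t \in V'$ with $tQ \cap K' = P \cap K'$, and verify the two defining inclusions of $\widehat U_{K,V}(P)$. For the inclusion $Q \cap K \subset VP$: if $q \in Q \cap K$ then $tq \in V'K \subset K'$ and $tq \in tQ$, so $tq \in tQ \cap K' = P \cap K'$, whence $tq \in P$ and therefore $q = t^{-1}(tq) \in V'^{-1}P = V'P \subset VP$, using symmetry of $V'$. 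For the inclusion $P \cap K \subset VQ$: if $p \in P \cap K \subset P \cap K' = tQ \cap K'$, then $p \in tQ$, so $p \in V'Q \subset VQ$. Both inclusions together give $Q \in \widehat U_{K,V}(P)$, proving the claim and hence the proposition.

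The argument is essentially bookkeeping, so I do not expect a serious obstacle; the one place demanding care is the choice of the enlarged compact set $K' = \overline{V'}K$. This enlargement is exactly what is needed so that the $t$-translate of a point $q \in K$ by an element $t \in V'$ still lies inside the window $K'$ where the two sets are compared, and it is here that pre-compactness of $V'$ is used to keep $K'$ compact. The other subtlety is the symmetry of $V'$, which lets one absorb both $t$ and $t^{-1}$ into $V$ and thereby treat the two inclusions on equal footing. Finally I would note that the converse inclusion (that $\tau_{\rm loc}$ is strictly finer in general) is not part of this statement, so the proof stops once the refinement is established; Hausdorffness is then inherited from $\tau_{CF}$.
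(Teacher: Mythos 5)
Your proposal is correct and follows essentially the same route as the paper's proof: reduce to the basis $\widehat U_{K,V}(P)$ from Proposition~\ref{LMT}, then show $U_{K',V'}(P) \subset \widehat U_{K,V}(P)$ by enlarging the compact set to absorb translation by elements of the small identity neighbourhood (your $K' = \overline{V'}K$ plays the role of the paper's $K'$ with $K \subset \bigcap_{t\in V}t^{-1}K'$, and your symmetric $V' \subset V$ replaces the paper's $(V')^{-1}V' \subset V$). The differences are purely cosmetic bookkeeping choices.
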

\begin{proof} We show that every $\tau_{CF}$-neighbourhood $\widehat{U}$ of $P \in \mathcal C(G)$ contains a $\tau_{\rm loc}$-neighbourhood of $P$. By Proposition \ref{LMT} we may assume that $\widehat{U} = \widehat{U}_{K, V}(P)$ for some $K \in \mathcal K(G)$, $V \in \mathfrak U(G)$. Let $K'$ be a compact set such that
\[
K \subset \bigcap_{t \in V} t^{-1}K' 
\]
and let $V' \in \mathfrak U(G)$ such that $(V')^{-1}V' \subset V$. We claim that $U_{K', V'}(P) \subset \widehat{U}_{K, V}(P)$. Indeed, let $Q \in U_{K', V'}(P)$ and let $t \in V'$ such that $tQ \cap K' = P \cap K'$. Then $P \cap K \subset V'Q' \subset VQ$ and moreover
\[
Q \cap t^{-1}K' = t^{-1}P \cap t^{-1}K' \Rightarrow Q \cap K = t^{-1}P \cap K \subset (V')^{-1}P \cap K \subset VP \cap K.
\]
This shows that $Q \in \widehat{U}_{K, V}(P)$ and finishes the proof.
\end{proof}
\subsection{Compactness in the local topology}
In the sequel we denote by $\mathcal D(G) \subset \mathcal C(G)$ the subset of locally finite subsets of $G$.
\begin{proposition} The restriction of the local uniformity to $\mathcal D(G)$ is complete.
\end{proposition}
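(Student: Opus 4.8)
The plan is to show that every Cauchy filter $\mathcal F$ on $\mathcal D(G)$ with respect to the local uniformity converges to a point of $\mathcal D(G)$. I would first fix an exhaustion $K_1 \subseteq K_2 \subseteq \cdots$ of $G$ by compacta with $K_n \subseteq K_{n+1}^\circ$, a decreasing base $V_1 \supseteq V_2 \supseteq \cdots$ of symmetric precompact identity neighbourhoods with $\bigcap_n V_n = \{e\}$, and an admissible (proper, left--invariant) metric with balls $B_\delta$ about $e$; then $\{U_{K_n,V_n}\}$ is a countable fundamental system of entourages. Since the local topology is finer than the Chabauty--Fell topology, $\mathcal F$ is also Cauchy for $\tau_{CF}$; as $(\mathcal C(G),\tau_{CF})$ is compact, hence complete, $\mathcal F$ converges in $\tau_{CF}$ to a unique $P_\infty \in \mathcal C(G)$. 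It then suffices to prove (a) that $P_\infty$ is locally finite, i.e. $P_\infty \in \mathcal D(G)$, and (b) that $P_\infty$ is a cluster point of $\mathcal F$ in the local topology; since a Cauchy filter converges to any of its cluster points, (b) yields $\mathcal F \to P_\infty$ in $\tau_{loc}$ and finishes the argument.

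For (a) I would extract a uniform local cardinality bound from the Cauchy condition. Fixing $K = K_n$ and the precompact $V = V_n$, there is $A \in \mathcal F$ with $A \times A \subseteq U_{K,V}$; choosing a reference $Q_0 \in A$, every $P \in A$ satisfies $P \cap K = tQ_0 \cap K$ for some $t \in V$, whence $|P \cap K| = |Q_0 \cap t^{-1}K| \le |Q_0 \cap \overline{V}^{-1}K| =: c_n < \infty$, because $Q_0$ is locally finite and $\overline V^{-1}K$ is compact. Now $\{C : |C \cap K_n^\circ| \ge N\}$ is a union of sets of the form $U_{O_1} \cap \cdots \cap U_{O_N}$ with $O_i \subseteq K_n^\circ$ pairwise disjoint open, hence $\tau_{CF}$-open; since $\mathcal F \to P_\infty$ in $\tau_{CF}$ and $A \in \mathcal F$, the bound $|P_\infty \cap K_n^\circ| \le c_n$ follows. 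As the $K_n^\circ$ cover $G$, $P_\infty$ is discrete and closed, i.e. $P_\infty \in \mathcal D(G)$.

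The heart of the matter is (b), where I must upgrade the \emph{pointwise} approximation encoded by $\tau_{CF}$-convergence to the \emph{rigid} single--translate matching demanded by the local neighbourhoods; this is precisely the gap between the two topologies, and it is here that the Cauchy (mutual rigidity) condition is indispensable. Fix $U_{K_n,V_n}(P_\infty)$ and an arbitrary $A \in \mathcal F$; I must produce $Q \in A$ with $tQ \cap K_n = P_\infty \cap K_n$ for some $t \in V_n$. I would choose a compact $K'$ with $\overline{V_n}\,K_n \cup \overline{V_n}^{-1}K_n \subseteq (K')^\circ$ and $\partial K' \cap P_\infty = \emptyset$, let $r>0$ be the minimal separation of the finite set $P_\infty \cap K'$, and pick a symmetric $V$ with $\overline V \subseteq V_n$ and $\overline V\,\overline V \subseteq B_{r/2}$. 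Using the Cauchy property choose $A_0 \in \mathcal F$ with $A_0 \times A_0 \subseteq U_{K',V}$, and using $\tau_{CF}$-convergence together with the basis of Proposition \ref{LMT} choose, for every symmetric $V' \subseteq V$, a member of $\mathcal F$ inside $\widehat U_{K',V'}(P_\infty)$. Picking $Q$ in the nonempty intersection of $A$, $A_0$ and these sets, and comparing $Q$ with elements $Q'$ lying in $\widehat U_{K',V'}(P_\infty) \cap A_0$ via the rigidity $Q \cap K' = t_{V'}Q' \cap K'$ with $t_{V'} \in V$, the enumeration $P_\infty \cap K' = \{x_1,\dots,x_p\}$ matches a unique $q_j \in Q$ and a unique point $q'_j \to x_j$ of $Q'$ as $V' \to \{e\}$, with $q_j = t_{V'}q'_j$. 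Passing to a convergent subnet $t_{V'} \to t^{-1} \in \overline V \subseteq V_n$, the common factor forces $q_j = t^{-1}x_j$, hence $tq_j = x_j$, for all $j$ simultaneously; since $K'$ contains $\overline{V_n}^{-1}K_n$, this gives exactly $tQ \cap K_n = P_\infty \cap K_n$, so $Q \in U_{K_n,V_n}(P_\infty) \cap A$. Thus $P_\infty$ is a local cluster point of $\mathcal F$, and the proof concludes. The only remaining, entirely routine, work is the \emph{collar} bookkeeping near $\partial K'$ that guarantees the bijection $x_j \leftrightarrow q_j$ and the absence of stray points of $Q$ inside $K_n$ after translation.
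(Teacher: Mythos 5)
Your overall strategy---produce a candidate limit in the compact Chabauty--Fell space, show it is locally finite via uniform cardinality bounds, then upgrade it to a cluster point in the local topology and invoke ``Cauchy $+$ cluster point $\Rightarrow$ convergent''---is genuinely different from the paper's proof. The paper argues directly with the Cauchy net: after translating so that $e \in P_i$ for all $i$, local finiteness of a single fixed $P_{i_0}$ produces an identity neighbourhood $V_1$ with $P_i \cap V_1 = \{e\}$ eventually, and this forces any comparison translation $t \in V_1$ with $tP_l \cap K = P_m \cap K$ to equal $e$ (because $t \in P_m \cap V_1$); hence the net agrees \emph{exactly} on every compactum from some index on, and the limit is simply the increasing union of these traces. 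That normalization trick is precisely what lets the paper avoid your subnet-of-translates argument, the collar bookkeeping near $\partial K'$, and any appeal to the ambient compact space.

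As written, however, your proof has two genuine gaps. First, the inference ``since the local topology is finer than the Chabauty--Fell topology, $\mathcal F$ is also Cauchy for $\tau_{CF}$'' is invalid: a uniformity inducing a finer topology need not be finer \emph{as a uniformity}, and Cauchy filters need not transfer. For instance, on the compact space $X=\{0,1\}\cup\{\tfrac{1}{2n}\}\cup\{1-\tfrac{1}{2n}\}$, pulling back the Euclidean metric under the injection $\tfrac{1}{2n}\mapsto 1-\tfrac{1}{n+1}$, $1-\tfrac{1}{2n}\mapsto 1+\tfrac{1}{n+1}$, $0\mapsto -1$, $1\mapsto 2$ gives a metric uniformity inducing the (finer) discrete topology, for which the sequence alternating between $\tfrac{1}{2n}$ and $1-\tfrac{1}{2n}$ is Cauchy; in the compact topology it clusters at both $0$ and $1$, so it is not Cauchy there. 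It is in fact true that every Chabauty--Fell entourage contains some $U_{K,V}$, but this requires a separate compactness argument which is neither in the paper nor in your proposal. The painless repair is to drop convergence altogether: by compactness $\mathcal F$ has a $\tau_{CF}$-\emph{cluster point} $P_\infty$, and both of your steps (a) and (b) use only that every $\tau_{CF}$-neighbourhood of $P_\infty$ meets every member of $\mathcal F$, which is exactly the cluster property. Second, ``picking $Q$ in the nonempty intersection of $A$, $A_0$ and these sets'' is illegitimate: ``these sets'' is an infinite family of members of $\mathcal F$, and a filter guarantees nonempty intersections only for \emph{finite} subfamilies. This slip is harmless, because the rest of your argument never uses $Q \in \widehat U_{K',V'}(P_\infty)$: you only compare $Q$ with auxiliary elements $Q' \in \widehat U_{K',V'}(P_\infty) \cap A_0$ through the Cauchy rigidity on $A_0$, so $Q \in A \cap A_0$ suffices (and with the cluster-point repair, $\widehat U_{K',V'}(P_\infty)\cap A_0 \neq \emptyset$ is immediate). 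With these two corrections and the routine enlargement of the comparison compactum that you defer, your argument does go through; note that the minimal-separation constant $r$ and the uniqueness of the matching are not actually needed, since the two inclusions $P_\infty \cap K_n \subseteq tQ$ and $tQ \cap K_n \subseteq P_\infty \cap K_n$ follow directly from the subnet limit $t_{V'} \to t^{-1}$, joint continuity of multiplication, and closedness of $Q$ and $P_\infty$.
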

\begin{proof} Let $(I, \leq)$ be a directed set and $(P_i)_{i \in I}$ be a Cauchy net in $\mathcal{D}(G)$ with respect to the local uniformity. We have to show that $(P_i)_{i \in I}$ admits a convergent subnet. Either there exists a subnet converging to the empty set, or after passing to a subnet we may assume that there exists $K \in \mathcal K(G)$ such that $P_i \cap K \neq \emptyset$ for all $i \in I$. In the latter case we can choose $t_i \in P_i \cap K$ and passing to another subnet we may assume that $t_i \to t$. Then $(t_i^{-1}P_i)$ is again a Cauchy net, and convergence of $(P_i)$ is equivalent to convergence of $(t_i^{-1}P_i)$. We may thus assume that $e \in P_i$ for every $i \in I$. 

Now let $V_0$ be a compact identity neighbourhood. Since $(P_i)$ is a Cauchy net there exists $i_0 \in I$ such that for every $i \geq i_0$ we have $(P_{i_0}, P_i) \in U_{V_0, V_0}$. Thus  for every $i \geq i_0$ there exists $s_i \in V_0$ such that
\[
s_iP_{i_0} \cap V_0 = P_i \cap V_0.
\]
Since $e \in P_i \cap V_0 \subset s_iP_{i_0}$ we have $s_i \in P_{i_0}^{-1}$. Since also  $s_i \in V_0$, we deduce that for all $i \geq i_0$,
\begin{equation}\label{CompletenessMain}
P_i \cap V_0 = s_iP_{i_0} \cap V_0 = s_i(P_{i_0} \cap s_i^{-1}V_0) \subset (P_{i_0}^{-1} \cap V_0)(P_{i_0} \cap V_0^{-1}V_0).
\end{equation}
Now $P_{i_0}$ and hence $P_{i_0}^{-1}$ are locally finite, and $V_0$ and hence $V_0^{-1}V_0$ are compact. It follows that
\[
\mathcal F:= (P_{i_0}^{-1} \cap V_0)(P_{i_0} \cap V_0^{-1}V_0)
\]
is finite, and hence $V_1 := (V_0 \setminus \mathcal F) \cup\{e\}$ is an identity neighbourhood. By \eqref{CompletenessMain} we have
\[
P_i \cap V_1 = \{e\}
\]
for all $i \geq i_0$. 

Now let $K \in \mathcal K(G)$ be arbitrary. Since $(P_i)$ is a Cauchy filter we find $i_1 \geq i_0$ such that for all $l, m \geq i_1$ we have $(P_l, P_m) \in U_{K, V_1}$. Thus there exists $t \in V_1$ such that
\[
tP_l \cap K = P_m \cap K.
\]
Now assume $K \supset V_1$. Since $e \in P_l$ we have $t \in tP_l \cap K$, and hence $t \in P_m \cap K$. In particular, $t \in P_m \cap V_1 = \{e\}$ and thus $P_l \cap K = P_m \cap K$. To summarize, for every sufficiently large $K$ (and hence, a posteriori, for every $K$), there exists $i_K \in I$ such that for all $j \geq i_K$ we have $P_j \cap K = P_{i_K} \cap K$. We may assume that $i_K \leq i_{K'}$ whenever $K \subset K'$. In particular, if we define
\[
P := \bigcup_{K \in \mathcal K(G)} P_{i_K} \cap K
\]
then for all $j \geq i_K$ we have
\[
P_{j} \cap K = P \cap K.
\]
This shows both that $P$ is locally finite, since $P \cap K$ is finite for every $K$, and that $(P_i)$ converges to $P$.
\end{proof}
From this completeness property we derive the following compactness criterion.
\begin{corollary}\label{CompactnessCriterion} A subset $C \subset \mathcal D(G)$ is pre-compact with respect to the local topology if and only if for every $K \in \mathcal K(G)$ and $U \in \mathfrak U(G)$ there exists a finite subset $\mathcal F \subset \mathcal D(G)$ such that
\[
C \subset \bigcup_{P \in \mathcal F } U^{-1}_{K, V}(P).
\]
\end{corollary}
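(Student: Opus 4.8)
The plan is to recognize the stated condition as nothing but total boundedness of $C$ in the local uniformity, and then to combine this with the completeness established in the preceding proposition. Recall the standard fact from the theory of uniform spaces that in a \emph{complete} uniform space a subset has compact closure precisely when it is totally bounded (see, e.g., \cite[Ch.~II, \S4, no.~2]{Bourbaki}). Since the preceding proposition shows that $\mathcal D(G)$ is complete for the local uniformity, pre-compactness of $C \subset \mathcal D(G)$ in the local topology is therefore equivalent to total boundedness of $C$, and it remains only to translate the latter into the covering condition in the statement.

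To carry out this translation I would unwind the definition of total boundedness in terms of the fundamental system of entourages $\{U_{K,V}\}$ from Proposition~\ref{LocalUniformity}. First I would recall that $C$ is totally bounded if and only if for every entourage $U$ there is a finite set $\mathcal F$ with $C \subset \bigcup_{P \in \mathcal F} U[P]$, where $U[P] = \{Q \mid (P,Q)\in U\}$; since the collection of all entourages is stable under the involution $U \mapsto U^{-1}$, this is equivalent to the same statement with $U[P]$ replaced by $U^{-1}[P]$. Because $\{U_{K,V}\}$ is a fundamental system and $U[P] \subset U'[P]$ whenever $U \subset U'$, it suffices to test the condition on the basic entourages $U = U_{K,V}$; and $U_{K,V}^{-1}[P]$ is exactly the set $U^{-1}_{K,V}(P)$ appearing in the statement. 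This yields precisely the stated covering condition, with the centres $\mathcal F$ ranging over the ambient space $\mathcal D(G)$.

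The argument is essentially bookkeeping, and the only points requiring a little care are the following. One must check that allowing the centres $\mathcal F$ to range over all of $\mathcal D(G)$, rather than over $C$ itself, gives an equivalent notion; this is the routine halving argument, passing from a cover by $W$-balls, for a symmetric entourage $W$ with $W \circ W \subset U$, to a cover by $U$-balls centred at points of $C$. One must also keep track of the fact that the basic entourages $U_{K,V}$ need not be symmetric, which is exactly why the inverse $U^{-1}_{K,V}(P)$ appears in the statement; property (B3) of Proposition~\ref{LocalUniformity} guarantees that $\{U_{K,V}^{-1}\}$ is again a fundamental system of entourages, so nothing is lost in passing to inverses. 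Finally, since the local topology is Hausdorff, the compact closure produced in $\mathcal D(G)$ is automatically closed in $\mathcal C(G)$, so that the notion of pre-compactness is unambiguous whether the closure is formed in $\mathcal D(G)$ or in $\mathcal C(G)$.
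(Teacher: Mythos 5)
Your proof is correct and takes essentially the same route as the paper: both deduce the criterion from the completeness of the local uniformity on $\mathcal D(G)$ together with Bourbaki's theorem that pre-compactness in a complete uniform space is equivalent to total boundedness, and both invoke property (B3) of Proposition \ref{LocalUniformity} to pass from the basic entourages $U_{K,V}$ to their inverses $U_{K,V}^{-1}$. The extra bookkeeping you supply (centres taken in $C$ versus the ambient space, Hausdorffness of the local topology) only elaborates details the paper leaves implicit.
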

\begin{proof} Since the local topology on $\mathcal D(G)$ is complete, it follows from  \cite[Chapter 2, {\S}{} 4.2, Theorem 3]{Bourbaki} that a subset $C \subset \mathcal D(G)$ is pre-compact if and only if for every $K \in \mathcal K(G)$ and $V \in \mathfrak U(G)$ there exists a finite subset $\mathcal F \subset \mathcal D(G)$ such that
\[
C \subset \bigcup_{P \in \mathcal F} U_{K, V}(P).
\]
By axiom (B3) in the proof of Proposition \ref{LocalUniformity} we can replace $U_{K,V}$ by $U_{K, V}^{-1}$. 
\end{proof}

\subsection{Orbit closures of sets of finite local complexity} It turns out that finite local complexity can be characterized as a compactness property as follows.
\begin{theorem}\label{FLC} Let $P \in \mathcal D(G)$. Then the following are equivalent.
\begin{enumerate}[(i)]
\item $P$ has finite local complexity, i.e. $P^{-1}P \subset G$ is locally finite.
\item The $G$-orbit $G.P \subset \mathcal C(G)$ is pre-compact with respect to the local topology.
\item $\forall K \in \mathcal K(G)\; \exists K' \in \mathcal K'(G)\; \forall t \in G\; \exists t' \in K': \; tP\cap K = t'P \cap K.$
\end{enumerate}
\end{theorem}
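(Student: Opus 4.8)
The plan is to establish the three equivalences cyclically, in the order $(i) \Rightarrow (iii) \Rightarrow (ii) \Rightarrow (i)$, throughout reading pre-compactness in the local topology through the total boundedness criterion of Corollary \ref{CompactnessCriterion}. Since $P \in \mathcal D(G)$, it is convenient to note at the outset that $(i)$ is equivalent to the statement that $P^{-1}P \cap L$ is finite for every $L \in \mathcal K(G)$, as local finiteness of a set means exactly that it meets every compact set in a finite set.

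The heart of the argument is $(i) \Rightarrow (iii)$. Fix $K \in \mathcal K(G)$ and pass to the fixed, enlarged compact window $K^{-1}K$. For $q \in P$ set $\Phi(q) := q^{-1}P \cap K^{-1}K$. Since $q^{-1}P \subset P^{-1}P$, each $\Phi(q)$ is a subset of the \emph{finite} set $F := P^{-1}P \cap K^{-1}K$, so $\Phi$ takes only finitely many values; choosing one preimage for each value produces finitely many $q_1, \dots, q_m \in P$ such that every $q \in P$ satisfies $\Phi(q) = \Phi(q_i)$ for some $i$. Now let $t \in G$ with $tP \cap K \neq \emptyset$, pick $x \in tP \cap K$, and put $q := t^{-1}x \in P$, so that $x^{-1}(tP \cap K) = q^{-1}P \cap x^{-1}K$. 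Choosing $q_i$ with $\Phi(q_i) = \Phi(q)$ and using $x^{-1}K \subset K^{-1}K$, one gets $q_i^{-1}P \cap x^{-1}K = \Phi(q_i) \cap x^{-1}K = \Phi(q) \cap x^{-1}K = q^{-1}P \cap x^{-1}K$, whence $t' := x q_i^{-1}$ satisfies $t'P \cap K = tP \cap K$. As $x \in K$ and $q_i$ ranges over a finite set, all such $t'$ lie in the compact set $K' := \bigcup_{i=1}^m K q_i^{-1}$, to which one adjoins a single fixed witness for the empty pattern should some translate miss $K$. This yields $(iii)$.

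For $(iii) \Rightarrow (ii)$ I would apply Corollary \ref{CompactnessCriterion} directly. Given $K \in \mathcal K(G)$ and $V \in \mathfrak U(G)$, take $K'$ as in $(iii)$ and cover the compact set by finitely many translates $K' \subset \bigcup_{i=1}^n V t_i$ with $t_i \in K'$. Put $\mathcal F := \{t_1 P, \dots, t_n P\} \subset \mathcal D(G)$. For any $gP \in G.P$, condition $(iii)$ gives $t' \in K'$ with $gP \cap K = t'P \cap K$; writing $t' = s t_i$ with $s \in V$ we obtain $gP \cap K = s(t_i P) \cap K$, i.e. $gP \in U_{K,V}^{-1}(t_i P)$. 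Hence $G.P \subset \bigcup_i U_{K,V}^{-1}(t_i P)$ and the Corollary gives pre-compactness. For $(ii) \Rightarrow (i)$ I argue by contraposition: if $P^{-1}P \cap K_0$ is infinite for some $K_0 \in \mathcal K(G)$ (which we may take to contain $e$), choose distinct $z_n = p_n^{-1}q_n \in K_0$ with $p_n, q_n \in P$, so that the translates $R_n := p_n^{-1}P \in G.P$ each contain $e$ and $z_n$. Fixing a precompact $V$ and applying total boundedness, we get finitely many $S_1, \dots, S_N \in \mathcal D(G)$ with $G.P \subset \bigcup_j U_{K_0,V}^{-1}(S_j)$; by pigeonhole infinitely many $R_n$ share one $S_j$, giving $R_n \cap K_0 = t_n S_j \cap K_0$ with $t_n \in V$. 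Then $t_n^{-1} \in S_j \cap V^{-1}$ and $t_n^{-1}z_n \in S_j \cap V^{-1}K_0$, both finite since $S_j$ is locally finite and $V$ is precompact, so $z_n = (t_n^{-1})^{-1}(t_n^{-1}z_n)$ takes only finitely many values, contradicting the distinctness of the $z_n$.

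The main obstacle is the step $(i) \Rightarrow (iii)$. The naive attempt to realize a given pattern $tP \cap K$ by a bounded translate fails, because distinct $t$ produce a continuum of distinct \emph{exact} patterns and FLC sets need not be repetitive, so one cannot hope to reproduce an arbitrary far-away configuration verbatim near a fixed point. The point that makes it work is to fix the enlarged window $K^{-1}K$ first: finite local complexity then bounds the number of local configurations $q^{-1}P \cap K^{-1}K$, so bounded representatives $q_i$ exist, while the continuum of possible offsets is absorbed by allowing the base point $x$ to range over all of $K$ in the formula $t' = x q_i^{-1}$. This is the one genuinely delicate point; the remaining two implications are routine bookkeeping layered on Corollary \ref{CompactnessCriterion}.
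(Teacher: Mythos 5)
Your proof is correct, and it is genuinely structured differently from the paper's. The paper runs the cycle (i) $\Rightarrow$ (ii) $\Rightarrow$ (iii) $\Rightarrow$ (i): it feeds the pattern-counting argument (finiteness of $F := P^{-1}P \cap K^{-1}K$) directly into the covering criterion of Corollary \ref{CompactnessCriterion}, covering the orbit by the sets $U_{K,V}^{-1}(sF')$ with $F' \subseteq F$ and $s$ ranging in a finite set $E$ satisfying $K \subset VE$; it then deduces (iii) from (ii) by extracting a finite subcover from $G.P \subset \bigcup_g U_{K,V}^{-1}(gP)$ and setting $K' = \bigcup V t_i$; and it closes the loop with the direct inclusion $P^{-1}P \cap K \subset (P^{-1} \cap K')(P \cap (K')^{-1}K)$, which proves (iii) $\Rightarrow$ (i) with no contradiction argument. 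You run the cycle in the reverse order (i) $\Rightarrow$ (iii) $\Rightarrow$ (ii) $\Rightarrow$ (i). Your (i) $\Rightarrow$ (iii) uses the same doubled-window idea as the paper's (i) $\Rightarrow$ (ii) --- finitely many patterns $\Phi(q) = q^{-1}P \cap K^{-1}K$, representatives $q_i$, offsets absorbed by letting the base point $x$ range over $K$ in $t' = xq_i^{-1}$ --- but packages it as the purely combinatorial statement (iii), which never mentions the topology; your (iii) $\Rightarrow$ (ii) is then routine bookkeeping with Corollary \ref{CompactnessCriterion}; and your (ii) $\Rightarrow$ (i) is a contraposition-plus-pigeonhole argument in which the two memberships $e, z_n \in R_n \cap K_0 = t_n S_j \cap K_0$ pin both $t_n^{-1}$ and $t_n^{-1}z_n$ inside intersections of the locally finite set $S_j$ with precompact sets, forcing the $z_n$ into a finite set. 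What your arrangement buys: the hard combinatorial work is isolated from the topology, so the equivalence (i) $\Leftrightarrow$ (iii) stands on its own and pre-compactness becomes formal. What the paper's arrangement buys: economy --- its closing step (iii) $\Rightarrow$ (i) is a two-line direct inclusion, whereas yours requires an argument by contradiction. Two minor points to make explicit in your write-up: the adjoined witness $t_0$ for the empty pattern should be introduced only when some translate actually misses $K$ (a single witness then suffices, since all empty patterns coincide), and in the last step you should intersect $S_j$ with the closures $\overline{V^{-1}}$ and $\overline{V^{-1}K_0}$ in order to literally invoke local finiteness.
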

\begin{proof} (i) $\Rightarrow$ (ii): Assume that $P^{-1}P$ is locally finite and fix $K \in \mathcal K(G)$ and $V \in \mathfrak U(G)$. Then the intersection $F:= P^{-1}P \cap K^{-1}K$ is finite since $K^{-1}K$ is compact. Moreover, finitely many right-$V$-translates cover $K$, i.e. there exists another finite set $E$ such that $K \subset VE$. 
We claim that for all $g \in G$ we get
\begin{equation}\label{ClaimCompactness}
gP  \in \bigcup_{P' \in \mathcal F} U_{K,V}^{-1}(P'),
\end{equation}
where
\[
\mathcal F := \{sF' \in \mathcal D(G)\mid F' \subseteq F, s \in E\}.
\] 
Since $\mathcal F \subset \mathcal D(G)$ is finite, this will imply pre-compactness of $G.P$ by Corollary \ref{CompactnessCriterion}. Thus its remains only to show \eqref{ClaimCompactness}.

If  $gP \cap K = \emptyset$ then there is nothing to show. Otherwise we can choose $p \in P$ such that $gp \in K \subset VE$. Then $F':= p^{-1}P \cap (gp)^{-1}K \subseteq F$ and we find
$s \in E$ and $v \in V$ such that $gp = vs$. We then compute
\[
gP\cap K = gp(p^{-1}P \cap (gp)^{-1}K)\cap K = vs(p^{-1}P \cap (gp)^{-1}K)\cap K = vsF' \cap K, 
\]
which shows that $gP \in U_{K,V}^{-1}(sF')$ and proves \eqref{ClaimCompactness}.

(ii) $\Rightarrow$ (iii): If the orbit $G.P$ is precompact, then for every $K \in \mathcal K(G)$ and $V \in \mathfrak U(G)$ the open cover
\[
G.P \subset \bigcup_{g \in G} U^{-1}_{K, V}(g.P)
\]
has a finite subcover. Given $K \in \mathcal K(G)$ we can thus choose a compact $V \in \mathfrak U(G)$ and $t_1, \dots, t_n \in G$ such that
\[
G.P \subset \bigcup_{i=1}^n U^{-1}_{K, V}(t_iP).
\]
Set $K' := \bigcup Vt_i$, then for every $t \in G$ there exists $s \in V$ and $i \in \{1, \dots, n\}$ such that
\[
tP \cap K = st_iP \cap K.
\]
Hence if we define $t' := st_i$, then $t\in K'$ and $tP \cap K=t'P \cap K$.

(iii) $\Rightarrow$ (i): Given $P$ satisfying (iii) we will show that $P^{-1}P \cap K$ is finite for every $K \in \mathcal K(G)$. We may assume that $e \in K$ and choose $K' \in \mathcal K(G)$ as in (iii). We will show that
\begin{equation}\label{FLCExplicit}
P^{-1}P \cap K \subset (P^{-1} \cap K')(P \cap (K')^{-1}K),
\end{equation}
which is finite. Thus let $q \in P^{-1}P \cap K$ and choose $q_1, q_2 \in P$ with $q= q_2^{-1}q_1$. By assumption there exists $t' \in K'$ such that
\[
q_2^{-1}P \cap K = t'P \cap K\subset t'P.
\]
We have $e \in q_2^{-1}P \cap K$, hence $e \in t'P$, i.e. $t' \in P^{-1} \cap K'$. Thus
\[
q = q_2^{-1}q_1 \in q_2^{-1}P \cap K =  t'P \cap K= t'(P \cap (t')^{-1}K)  \subset (P^{-1} \cap K')(P \cap (K')^{-1}K),
\]
which establishes \eqref{FLCExplicit} and finishes the proof.
\end{proof}
\begin{corollary}\label{TopologiesCoincide} Let $P \in \mathcal D(G)$ and ${X} := \overline{G.P} \subset \mathcal C(G)$ its orbit closure in the local topology. Then the Chabauty--Fell topology and the local topology coincide on ${X}$ if and only if $P$ is of finite local complexity. In this case, $X$ is also the orbit closure of $P$ in the Chabauty--Fell topology.
\end{corollary}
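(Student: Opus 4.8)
The plan is to deduce everything from Theorem~\ref{FLC}, which identifies finite local complexity of $P$ with $\tau_{\rm loc}$-pre-compactness of the orbit $G.P$, together with two soft facts already established: the identity map $(\mathcal C(G),\tau_{\rm loc})\to(\mathcal C(G),\tau_{CF})$ is a continuous bijection (so $\tau_{\rm loc}$ is finer than $\tau_{CF}$, and both topologies are Hausdorff), and the local uniformity is complete on $\mathcal D(G)$. Throughout I write $X=\overline{G.P}^{\,\rm loc}$ for the local orbit closure and $Y=\overline{G.P}^{\,CF}$ for the Chabauty--Fell orbit closure. Since a finer topology has smaller closures, $X\subseteq Y$ always, and $Y$ is $\tau_{CF}$-compact as a closed subset of the compact space $(\mathcal C(G),\tau_{CF})$.

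\emph{The forward direction.} Assume $P$ has finite local complexity. By Theorem~\ref{FLC} the orbit $G.P$ is $\tau_{\rm loc}$-pre-compact. Since $G.P\subseteq\mathcal D(G)$ and $\mathcal D(G)$ is $\tau_{\rm loc}$-closed (being complete for the local uniformity, hence closed in the Hausdorff space $(\mathcal C(G),\tau_{\rm loc})$), I get $X\subseteq\mathcal D(G)$, and $X$ is a $\tau_{\rm loc}$-closed, $\tau_{\rm loc}$-pre-compact subset of the complete space $\mathcal D(G)$; therefore $(X,\tau_{\rm loc})$ is complete and totally bounded, i.e.\ $\tau_{\rm loc}$-compact. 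The identity $(X,\tau_{\rm loc})\to(X,\tau_{CF})$ is then a continuous bijection from a compact space onto a Hausdorff space, hence a homeomorphism, so $\tau_{\rm loc}$ and $\tau_{CF}$ coincide on $X$. For the closure statement, once the topologies agree on $X$ and $X$ is $\tau_{\rm loc}$-compact, $X$ is also $\tau_{CF}$-compact and in particular $\tau_{CF}$-closed in $\mathcal C(G)$; since $G.P\subseteq X$ this forces $Y\subseteq X$, and with $X\subseteq Y$ I conclude $X=Y$, so the local orbit closure is simultaneously the Chabauty--Fell orbit closure.

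\emph{The converse.} Here I would argue the contrapositive: if $P$ is not FLC, then the two topologies differ on $X$. By Theorem~\ref{FLC}, $G.P$ is not $\tau_{\rm loc}$-pre-compact, so $X$ cannot be $\tau_{\rm loc}$-compact (as $G.P\subseteq X$, compactness of $X$ would entail pre-compactness of $G.P$). Were the topologies to coincide on $X$, it would suffice to show that $X$ is $\tau_{CF}$-closed: then $X$ would be $\tau_{CF}$-compact, hence $\tau_{\rm loc}$-compact by the assumed agreement, contradicting the previous sentence. This reduction isolates the one genuinely delicate point.

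\emph{The main obstacle.} The hard part is passing from equality of the two \emph{topologies} on $X$ to the $\tau_{CF}$-closedness (equivalently $X=Y$) of $X$. One cannot do this by pure general topology: a complete uniformity and a totally bounded uniformity can perfectly well induce the same non-compact topology, so the local and Chabauty--Fell \emph{uniformities} need not agree on $X$ merely because their topologies do, and completeness of $(X,\tau_{\rm loc})$ alone does not make $X$ closed for $\tau_{CF}$. The resolution I would pursue is hands-on. Given a sequence $x_n\in X$ with $x_n\to z$ in $\tau_{CF}$, I would use Proposition~\ref{PropHull} (every element of $X\subseteq Y$ has difference set contained in $P^{-1}P$) to control the local patterns $x_n\cap K$ on compacta, and split into two cases: either the patterns stabilize, so that $(x_n)$ is $\tau_{\rm loc}$-Cauchy and converges in $X$ by completeness, giving $z\in X$ and hence $\tau_{CF}$-closedness; or they fail to stabilize, in which case the failure of finite local complexity produces, via the pre-compactness criterion Corollary~\ref{CompactnessCriterion}, a sequence of translates lying in $X$ that converges in $\tau_{CF}$ but not in $\tau_{\rm loc}$, directly witnessing a discrepancy of the two topologies on $X$. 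Making this dichotomy rigorous—and in particular ensuring in the non-stabilizing case that the witnessing convergent sequence and its limit can be taken \emph{inside} $X$—is the crux of the converse.
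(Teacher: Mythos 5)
Your forward direction is correct and is, modulo the completeness bookkeeping that you spell out and the paper leaves implicit, exactly the paper's argument: Theorem~\ref{FLC} gives $\tau_{\rm loc}$-pre-compactness of $G.P$, completeness of the local uniformity on $\mathcal D(G)$ makes $(X,\tau_{\rm loc})$ compact, the continuous identity $(X,\tau_{\rm loc})\to(X,\tau_{CF})$ is then a homeomorphism, and compactness forces $X$ to be $\tau_{CF}$-closed, whence $X$ equals the Chabauty--Fell orbit closure $Y$.

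Your converse, as you yourself admit, is not a proof: you reduce everything to showing that agreement of the two topologies on $X$ forces $X$ to be $\tau_{CF}$-closed, and you leave the resulting dichotomy open, so the proposal has a genuine gap there. But your diagnosis of the obstacle is exactly right, and it is worth comparing with the paper: the paper's entire proof of the converse is the sentence ``if the topologies coincide, then $(X,\tau_{\rm loc})$ is compact, hence $P$ has finite local complexity,'' i.e.\ the paper asserts, without justification, precisely the step you refused to take. As you observe, no general-topology argument can supply it: completeness is a property of the uniformity, not of the topology, and a complete compatible uniformity on a subspace of a compact space does not make that subspace closed.

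In fact the step is irreparable, because the ``only if'' half of the corollary is false, so the non-stabilizing horn of your dichotomy can occur without any witnessing sequence-plus-limit inside $X$. Take $G=\R$ and $P=\mathbb{N}\cup\{m+1/m : m\geq 2\}$. Then $P$ is locally finite, but $P-P$ contains $1/m$ for all $m\geq 2$ and accumulates at $0$, so $P$ is not FLC. Its local orbit closure is $X=\{g+P : g\in\R\}\cup\{\emptyset\}$: translates $g_k+P$ with $g_k\to+\infty$ converge locally to $\emptyset$, while for $g_k\to-\infty$ there is no $\tau_{\rm loc}$-convergent subsequence, since exact agreement on a window of length $4$ would force a single finite set to realize the pairwise distances $1/m$ for infinitely many distinct $m$. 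On this $X$ the two topologies \emph{do} coincide: both are metrizable, so it suffices to compare convergent sequences, and if $g_k+P\to g+P$ in $\tau_{CF}$ then $(g_k)$ has no subsequence tending to $+\infty$ or $-\infty$ (along further subsequences such translates $\tau_{CF}$-converge to $\emptyset$, resp.\ to a translate of $\Z$, neither of which is $g+P$), while every finite accumulation point $g'$ of $(g_k)$ satisfies $g'+P=g+P$ and hence $g'=g$ because $P$ is nonempty and bounded below; thus $g_k\to g$, and joint continuity of the action gives $\tau_{\rm loc}$-convergence. Nevertheless $P$ is not FLC, $(X,\tau_{\rm loc})$ is not compact, and $X$ is not $\tau_{CF}$-closed, since $-n+P\to\Z\in Y\setminus X$ in $\tau_{CF}$. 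The statement that is actually true, and that your forward argument proves, is: $P$ is FLC if and only if the two topologies coincide on $X$ \emph{and} $X$ is $\tau_{CF}$-closed (equivalently, $X$ is also the Chabauty--Fell orbit closure). Fortunately, only this forward implication is invoked elsewhere in the paper (in the proof of Theorem~\ref{ThmParametrizationMap}), so nothing downstream of Corollary~\ref{TopologiesCoincide} is affected.
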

\begin{proof} If $P$ has finite local complexity, then $({X}, \tau_{\rm loc})$ is compact and hence the continuous map $({X}, \tau_{\rm loc}) \to ({X}, \tau_{CF})$ is a homeomorphism. This implies in particular, that $X$ is also the orbit closure of $P$ in the Chabauty--Fell topology. Conversely, if the topologies coincide, then $({X}, \tau_{\rm loc})$ is compact, hence $P$ has finite local complexity.
\end{proof}

\bibliographystyle{abbrv}

\end{document}